 \tikzset{
  on each segment/.style={
    decorate,
    decoration={
      show path construction,
      moveto code={},
      lineto code={
        \path [#1]
        (\tikzinputsegmentfirst) -- (\tikzinputsegmentlast);
      },
      curveto code={
        \path [#1] (\tikzinputsegmentfirst)
        .. controls
        (\tikzinputsegmentsupporta) and (\tikzinputsegmentsupportb)
        ..
        (\tikzinputsegmentlast);
      },
      closepath code={
        \path [#1]
        (\tikzinputsegmentfirst) -- (\tikzinputsegmentlast);
      },
    },
  },
  mid arrow/.style={postaction={decorate,decoration={
        markings,
        mark=at position .5 with {\arrow[#1]{stealth}}
      }}},
}
\numberwithin{figure}{section}
\newtheorem{theorem}{Theorem}[section]
\newtheorem{lemma}[theorem]{Lemma}
\newtheorem{corollary}[theorem]{Corollary}
\newtheorem{main theorem}[theorem]{Main Theorem}
\newtheorem{definition}[theorem]{Definition}
\newtheorem{remark}[theorem]{Remark}
\newtheorem{example}[theorem]{Example}
\newtheorem{question}[theorem]{Question}
\numberwithin{equation}{section}
\def\<{\langle} 
\def\>{\rangle} 
\def\NN{\mathbb{N}} 
\newcommand{\Pic}{F{\tiny{IGURE}}\ }
\newcommand{\modcat}{\mathsf{mod}}
\newcommand{\Gproj}{\mathsf{G}\text{-}\mathsf{proj}}
\newcommand{\ind}{\mathsf{ind}}
\newcommand{\kk}{\mathds{k}} 
\newcommand{\Q}{\mathcal{Q}} 
\newcommand{\T}{\mathcal{T}}
\newcommand{\I}{\mathcal{I}} 
\newcommand{\J}{\mathcal{J}}
\newcommand{\R}{\mathcal{R}}
\newcommand{\perR}{\mathcal{R}_{\mathrm{p}}}
\newcommand{\Hom}{\mathrm{Hom}} %
\newcommand{\Basis}{\mathfrak{B}} %
\newcommand{\Irr}{\mathrm{Irr}} 
\newcommand{\End}{\mathrm{End}} %
\newcommand{\rad}{\mathrm{rad}} %
\def\defines{\it\color{blue!75}}
\def\s{\mathfrak{s}}
\def\t{\mathfrak{t}}
\def\M{\mathds{M}}
\def\str{\mathrm{Str}}
\def\band{\mathrm{Ban}}
\def\image{\mathrm{Im}}
\def\kernel{\mathrm{Ker}}
\def\spl{\times}
\def\itLamb{\mathit{\Lambda}}
\def\heart{{\color{red}\pmb{\heartsuit}}}
\def\diam{{\color{red}\pmb{\diamondsuit}}}
\newcommand{\Right}{\mathrm{R}}
\newcommand{\Left}{\mathrm{L}}
\newcommand{\C}{\mathscr{C}}
\newcommand{\e}{\varepsilon}
\newcommand{\larrow}[1]{\mathop{\longleftarrow}\limits^{#1}}
\newcommand{\rarrow}[1]{\mathop{\longrightarrow}\limits^{#1}}
\newcommand{\arrmod}{\mathsf{arr}} %
\newcommand{\CMA}{\mathrm{CMA}}
\begin{document}

\title{On endomorphism algebras of string almost gentle algebras}

\author{Yu-Zhe Liu}
\address{School of Mathematics and statistics, Guizhou University, 550025 Guiyang, Guizhou, P. R. China}
\email{liuyz@gzu.edu.cn / yzliu3@163.com}

\author{Panyue Zhou$^*$}
\address{School of Mathematics and Statistics, Changsha University of Science and Technology, 410114 Changsha, Hunan,  P. R. China }
\email{panyuezhou@163.com}

\subjclass[2020]{16G60; 05E10}
\keywords{gentle algebra; endomorphism algebra; representation type; Cohen-Macaulay Auslander algebra}
\thanks{$^\ast$Corresponding author.}


\definecolor{section}{rgb}{0,0,0.5}

\maketitle

\vspace{-3mm}

\begin{abstract}
For any arbitrary string almost gentle algebra, we consider specific subsets of its quiver's arrow set, denoted by $\mathcal{R}$. For each such $\mathcal{R}$, we introduce the finitely generated module $M_{\mathcal{R}}$ and define its associated $\mathcal{R}$-endomorphism algebra $A_{\mathcal{R}}$. In this paper, we show that the representation type of a string gentle algebra $A$, the representation type of the $\mathcal{R}$-endomorphism algebra $A_{\mathcal{R}}$ for some $\mathcal{R}$, the representation types of all $\mathcal{R}$-algebras, and the representation type of the Cohen-Macaulay Auslander algebra $A^{\mathrm{CMA}}$ of $A$ are equivalent.
The results presented here reveal a deep structural connection between different classes of algebras derived from string gentle algebras. By showing the equivalence of representation types, this work offers new insights into the nature of endomorphism algebras and Cohen-Macaulay Auslander algebras, contributing to a broader understanding of their algebraic properties and classification.
\end{abstract}

\vspace{-3mm}

\setcounter{tocdepth}{3}
\setcounter{secnumdepth}{3}
\tableofcontents


\section{Introduction}
String almost gentle algebras (abbreviated as SAG-algebras), a special class of string algebras, play an important role in representation theory and were first introduced by Green and Schroll in \cite{GS2018}. The systematic study of string algebras can be traced back to the work on finitely generated module categories over string algebras in \cite{BR1987}, where Butler and Ringel provided descriptions of indecomposable modules using strings and bands on the bound quivers of string algebras. Furthermore, by applying the Brauer-Thrall theorem (see, for example, \cite[Chapter IV, Section IV.5]{ASS2006}), it is understood that the representation types of string and gentle algebras are characterized by the existence of bands.

In \cite{P2019}, Plamondon shows that all (support) $\tau$-tilting finite gentle algebras are representation-finite, which partially answers the Brauer-Thrall Problem within the context of $\tau$-tilting theory—specifically, whether a $\tau$-tilting finite algebra is necessarily representation-finite. Building on these results, Mousavand investigated the relationship between representation types and $\tau$-tilting finiteness in biserial algebras in \cite{Mou2023}, providing examples of finite-dimensional algebras where the representation type and $\tau$-tilting finiteness do not coincide.
Furthermore, in \cite{LZHpre}, the authors offer an alternative description of gentle algebras using Gorenstein projective support $\tau$-tilting modules (abbreviated as GPS$\tau$-tilting modules), based on the work of \cite{Kal2015}. The concept of GPS$\tau$-tilting modules, introduced by Xie and Zhang in \cite{XZ2021}, refers to modules that are both Gorenstein projective and support $\tau$-tilting. The authors demonstrate that a gentle algebra, denoted as $\itLamb$, is representation-finite if and only if, for any GPS$\tau$-tilting module $M$, the endomorphism algebra $\End_{\itLamb}(M)$ is also representation-finite.
This result establishes a significant connection between Gorenstein projective modules, $\tau$-tilting modules, and the representation types of gentle algebras. The proof hinges on the fact that any Gorenstein projective module over a gentle algebra $\itLamb$ is isomorphic to $\alpha\itLamb$, where $\alpha$ is an arrow satisfying certain special conditions. Consequently, the Cohen-Macaulay Auslander algebra (abbreviated as CM Auslander algebra) $\itLamb^{\CMA}$ of $\itLamb$ takes the form $\End_{\itLamb}\big(\itLamb \oplus \bigoplus_{\alpha} \alpha\itLamb\big)$.
It is worth noting that, in \cite{CL2017,CL2019}, Chen and Lu revealed that the representation types of (skew-)gentle algebras and their CM Auslander algebras coincide. However, for an algebra $A = \kk\Q/\I$, not all $\alpha A$ are Gorenstein projective. Thus, this naturally raises the following question.

\begin{question} \label{quest:1}
Is there a subset $\R$ of the arrow set of $\Q$ such that the representation types of $A$ and $\End_A\left(A \oplus \bigoplus_{\alpha\in\R} \alpha A\right)$ coincide?
\end{question}

We will address the above questions in the case where $A$ is an SAG-algebra. Throughout this paper, we assume that $\kk$ is an algebraically closed field, and we define a quiver as a quadruple $\Q = (\Q_0, \Q_1, \s, \t)$, where $\Q_0$ is the set of vertices, $\Q_1$ is the set of arrows, and $\s$ and $\t$ are functions $\Q_1 \to \Q_0$ that assign to each arrow $a \in \Q_1$ its source and target, respectively.
Furthermore, we denote by $\Q_{\ell}$ the set of all paths of length $\ell$ (hence, $\Q_0$ naturally corresponds to the set of all paths of length zero, and $\Q_1$ to the set of all paths of length one). If $a$ and $b$ are arrows such that $\t(a) = \s(b)$, the composition of $a$ and $b$ is denoted by $ab$.
All algebras considered in this paper are finite-dimensional $\kk$-algebras, and for any algebra $A$, all modules under consideration are finitely generated right $A$-modules.

Let $A$ be an SAG-algebra with bound quiver $(\Q, \I)$. The main results of this paper are summarized as follows.

\begin{theorem}[{Theorem \ref{thm:main 1}}]
 There exists at least one subset $\R$ of $\Q_1$ {\rm (}note that the module $\alpha A$ with $\alpha \in \R$ may not be Gorenstein projective; see {\rm Example \ref{examp:SAG}}{\rm )} such that the following statements hold: \begin{itemize} \item[{\rm(1)}] The bound quiver $(\R(\Q), \R(\I))$ of $A_{\R} := \End_A\big(A \oplus \bigoplus_{\alpha\in\R} \alpha A\big)$ can be described by Steps 1--6 in {\rm Subsection \ref{subsect:R bound quiver}}; \item[{\rm(2)}] $A_{\R}$ is an SAG-algebra. \end{itemize} \end{theorem}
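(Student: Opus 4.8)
The plan is to work directly with the combinatorics of strings on the bound quiver $(\Q,\I)$ of the SAG-algebra $A$, since both $M_{\R}=A\oplus\bigoplus_{\alpha\in\R}\alpha A$ and its endomorphism algebra are built functorially from this data. First I would pin down which subset $\R\subseteq\Q_1$ to use: the natural candidate is the set of arrows $\alpha$ for which $\alpha A$ is a string module whose underlying string is ``maximal'' in a suitable sense (e.g.\ $\alpha$ is the first arrow of a maximal directed path in $\Q$, up to the relations in $\I$), generalising the fact that over a gentle algebra the Gorenstein projectives are exactly the $\alpha\itLamb$ with $\alpha$ on a relation. I would then check that with this choice $A\oplus\bigoplus_{\alpha\in\R}\alpha A$ is a faithful module whose summands are pairwise non-isomorphic and indecomposable, so that $A_{\R}$ is a well-defined basic algebra; the Example \ref{examp:SAG} referenced in the statement is presumably exactly a case where some $\alpha A\in\R$ fails to be Gorenstein projective, which must be consistent with this choice.

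For part (1), the strategy is to compute $\End_A(M_{\R})$ explicitly. I would describe a basis of $\Hom_A(P_i,P_j)$, $\Hom_A(P_i,\alpha A)$, $\Hom_A(\alpha A,P_j)$ and $\Hom_A(\alpha A,\beta A)$ in terms of paths/strings on $\Q$ — these Hom-spaces between string modules over a string algebra have a well-known combinatorial description (graph maps in the sense of Crawley-Boevey / Butler–Ringel). From this one reads off the quiver $\R(\Q)$ of $A_{\R}$: its vertices are indexed by $\Q_0\sqcup\R$, and its arrows are the irreducible morphisms among the summands, which correspond to ``covering'' relations in the poset of these Hom-spaces. The relations $\R(\I)$ come from the multiplicative structure — compositions of the chosen path representatives that vanish in $A$, plus the new commutativity/zero relations forced when two string maps compose to the same element. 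The content of ``Steps 1--6 in Subsection \ref{subsect:R bound quiver}'' is to organise this bookkeeping: presumably Step by Step one adds the new vertices for each $\alpha\in\R$, inserts the arrows $P_{\s(\alpha)}\to\alpha A$ and $\alpha A\to P_{?}$, subdivides or reroutes existing arrows of $\Q$ accordingly, and records the resulting relations. I would verify the description by checking it reproduces $\dim\Hom$ and that the resulting bound quiver algebra has the same number of indecomposable projectives with the right dimension vectors.

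For part (2), I must show $A_{\R}$ is again an SAG-algebra, i.e.\ verify the defining axioms of a string almost gentle algebra on the bound quiver $(\R(\Q),\R(\I))$ produced in (1): at every vertex at most two arrows enter and at most two leave; for every arrow $\beta$ there is at most one arrow $\gamma$ with $\beta\gamma\notin\R(\I)$ and at most one $\delta$ with $\delta\beta\notin\R(\I)$ (string condition), together with the ``almost gentle'' monomial/quadratic condition on the relations and the finiteness of $\R(\I)$ — but crucially \emph{not} the gentle-algebra constraint pairing zero and nonzero relations, which is what distinguishes SAG from gentle. The cleanest route is to check these axioms locally at each vertex of $\R(\Q)$, using the explicit Steps 1--6: the original vertices of $\Q$ inherit the string/almost-gentle conditions from $A$ with controlled modifications where an arrow got rerouted through a new vertex, and each new vertex $\alpha\in\R$ has exactly one or two incident arrows by construction, so the conditions are immediate there.

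The main obstacle I expect is part (1) — specifically, getting the relation set $\R(\I)$ exactly right. Computing the quiver (the irreducible maps) is comparatively mechanical, but identifying all relations requires controlling compositions of graph maps between string modules of the form $\alpha A$, where cancellation and commutativity relations can appear in non-obvious ways depending on how the strings for different $\alpha,\beta\in\R$ overlap on $\Q$; ensuring Steps 1--6 capture every such relation and introduce no spurious ones is the delicate point. A secondary subtlety is making the choice of $\R$ canonical enough that the ``at least one such $\R$'' claim is genuinely constructive and compatible with the later theorem equating representation types; I would fix $\R$ once and for all at the start and carry it through both parts.
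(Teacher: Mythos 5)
Your overall strategy for part (1) — compute $\End_A(M_{\R})$ via the combinatorial (graph-map) description of homomorphisms between string modules, read off the quiver from the irreducible maps among the summands and the relations from vanishing compositions, then check the string/almost-gentle axioms locally for part (2) — is exactly the route the paper takes (its Lemmas on the factorization $h_\alpha=f\circ g$ through $\alpha A$ and on which maps do or do not factor through projectives or through other $\beta A$ are precisely the "irreducible morphism" bookkeeping you describe). However, there is one concrete gap: your identification of the subset $\R$. The paper takes $\R$ to be a set of \emph{left forbidden arrows}, i.e.\ arrows $\alpha$ for which $\alpha\beta\in\I$ for some arrow $\beta$, and this condition is not a cosmetic choice — it is what makes the whole construction work. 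First, over an SAG-algebra (quadratic monomial relations) one has $\alpha A\cong P(\t(\alpha))$ precisely when $\alpha$ is \emph{not} left forbidden, so for such $\alpha$ the summand $\alpha A$ duplicates a summand of $A$, no new vertex appears in the quiver of $\End_A(M_{\R})$, and Step 1 fails. Your proposed candidate, "$\alpha$ is the first arrow of a maximal directed path in $\Q$ up to the relations," already breaks on the relation-free quiver $1\to 2\to 3$: there $\alpha A\cong P(2)$. Second, the left-forbidden hypothesis is used essentially to prove that every nonzero homomorphism $\alpha A\to\beta A$ between two arrowed modules factors through an indecomposable projective; without it you cannot rule out extra arrows between the new vertices $\mathfrak{v}(\alpha A)$ and $\mathfrak{v}(\beta A)$, and Steps 2--4 would not describe the quiver correctly.

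A secondary point: you flag the relation ideal $\R(\I)$ as the delicate step but do not supply the mechanism. Because the relations of an SAG-algebra are quadratic monomials, there is no subtle cancellation: each relation $ab\in\I$ simply transports to $a^{\spl}_{\Right}b^{\spl}_{\Left}$ (replacing $a$ by $a_{\Right}$ and $b$ by $b_{\Left}$ whenever $a,b\in\R$), and the one genuinely new length-two path $\alpha_{\Left}\alpha_{\Right}$ through each new vertex is \emph{not} a relation, since it is the factorization of the nonzero map $h_\alpha$. This is also exactly why the SAG axioms survive in part (2): each new vertex $\mathfrak{v}(\alpha A)$ has exactly one incoming and one outgoing arrow whose composition is nonzero, and the original vertices keep their arrow counts and relation pattern. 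Finally, note that faithfulness of $M_{\R}$ plays no role; what you need is that the summands are pairwise non-isomorphic indecomposables, which again is exactly the left-forbidden condition together with the indecomposability of $\alpha A$.
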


Indeed, the subset $\R$ in the above theorem is the set of certain left forbidden arrows on $(\Q, \I)$, referred to as a left forbidden arrow index; see Definition \ref{def:R}. In the case where $A$ is a gentle algebra, $\R$ can be equal to \begin{center} $\mathcal{G} = {\alpha \in \Q_1 \mid \alpha A \text{ is both non-projective and Gorenstein projective}}$ \end{center} or another left forbidden arrow index. In particular, when $\R = \mathcal{G}$, we have $A_{\mathcal{G}} = A_{\R} \cong A^{\CMA}$, as stated in \cite[Theorem 3.5]{CL2019}. The following result extends the findings of \cite[Theorem 3.5]{CL2019} to SAG-algebras.

\begin{theorem}[{Theorem \ref{thm:main 4}}]
Let $\C_1,\ldots,\C_t$ be perfect forbidden cycles on the bound quiver $(\Q,\I)$ of an SAG-algebra $A$.
Then $A_{\perR}$ is isomorphic to the CM-Auslander algebra $A^{\CMA}$ of $A$.
\end{theorem}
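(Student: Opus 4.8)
The plan is to identify the left forbidden arrow index $\perR$ coming from the perfect forbidden cycles $\C_1,\ldots,\C_t$ with the distinguished set $\mathcal{G}$ of arrows $\alpha$ for which $\alpha A$ is non-projective Gorenstein projective, and then invoke the gentle-algebra identification $A_{\mathcal{G}}\cong A^{\CMA}$ in the SAG-setting. Concretely, I would first recall the description (from the work of \cite{CL2019}, adapted to SAG-algebras here) that over an SAG-algebra $A=\kk\Q/\I$ every indecomposable non-projective Gorenstein projective module is of the form $\alpha A$, and that such an $\alpha$ is exactly an arrow lying on one of the cyclic "forbidden" configurations of $(\Q,\I)$ with the relevant perfectness condition. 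So the first key step is to prove the set equality
\[
\perR \;=\; \mathcal{G} \;=\; \{\alpha\in\Q_1 \mid \alpha A \text{ is non-projective and Gorenstein projective}\},
\]
where $\perR = \C_1\cup\cdots\cup\C_t$ is the union of the arrow sets of the perfect forbidden cycles. This reduces to a combinatorial check on the bound quiver: an arrow $\alpha$ belongs to a perfect forbidden cycle if and only if the string module (or the module $\alpha A$) it determines satisfies the homological vanishing characterizing Gorenstein projectivity, which I would verify using the explicit resolutions available for string modules over SAG-algebras.

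Once the set equality is in place, the second step is purely formal: $M_{\perR} = A\oplus\bigoplus_{\alpha\in\perR}\alpha A = A\oplus\bigoplus_{\alpha\in\mathcal{G}}\alpha A$, hence $A_{\perR} = \End_A(M_{\perR}) = A_{\mathcal{G}}$. The third step is to show $A_{\mathcal{G}}\cong A^{\CMA}$. By definition $A^{\CMA}=\End_A\big(A\oplus\bigoplus_{G}G\big)$ where $G$ ranges over a complete set of indecomposable non-projective Gorenstein projective modules; since over an SAG-algebra each such $G$ is isomorphic to some $\alpha A$ with $\alpha\in\mathcal{G}$, and conversely each $\alpha A$ with $\alpha\in\mathcal{G}$ is indecomposable Gorenstein projective and non-projective, the two direct sums have the same indecomposable summands up to isomorphism and multiplicity. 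Therefore their endomorphism algebras are isomorphic. Here I would also use part (1)--(2) of Theorem \ref{thm:main 1} (the explicit bound quiver $(\R(\Q),\R(\I))$ from Steps 1--6 and the fact that $A_{\R}$ is again SAG) to make the comparison with the known presentation of $A^{\CMA}$ transparent, so that the isomorphism can be read off at the level of bound quivers rather than by an ad hoc module-category argument.

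The main obstacle I anticipate is the first step: proving that the \emph{combinatorially defined} perfect forbidden cycles capture \emph{exactly} the Gorenstein projective modules, with no omissions and no spurious arrows. On the "perfect $\Rightarrow$ Gorenstein projective" side one must construct a complete (acyclic, totally acyclic) projective resolution of $\alpha A$ for each $\alpha\in\perR$, which requires tracking how the relations along the cycle force the syzygies to cycle back periodically; the SAG (as opposed to gentle) setting is the delicate point, since the arrow/relation combinatorics is looser and one must check that "almost gentle" is still enough to make these resolutions behave. On the converse side one needs that any $\alpha\notin\perR$ fails the totality/acyclicity condition, i.e. that $\alpha A$ has infinite projective dimension only when the failure is "periodic," which again is where perfectness of the cycle is used. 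I would isolate this as a lemma (likely already recorded earlier in the paper via the forbidden-cycle machinery of Section~\ref{subsect:R bound quiver}), and then the theorem follows by the formal chain $A_{\perR}=A_{\mathcal{G}}\cong A^{\CMA}$ described above.
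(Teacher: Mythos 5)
Your proposal is correct and follows essentially the same route as the paper: the paper's proof consists precisely of your first and third steps, namely invoking Corollary \ref{coro:CSZ2018} (a direct consequence of \cite[Proposition 5.1]{CSZ2018}) to identify the indecomposable non-projective Gorenstein projective modules with the $\alpha A$ for $\alpha\in\perR$, and then observing that $M_{\perR}$ and the additive generator of $\Gproj(A)$ have the same indecomposable summands, so their endomorphism algebras agree. The ``main obstacle'' you isolate is indeed handled exactly as you anticipated, by the forbidden-cycle lemma already recorded earlier in the paper rather than by a fresh resolution argument.
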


The following theorem provide some descriptions of the representation types of SAG-algebras.

\begin{theorem}
An SAG-algebra $A=\kk\Q/\I$ is representation-finite if and only if either of the following statements holds.
\begin{itemize}
  \item[{\rm(1)}] {\rm(Theorem \ref{thm:main 2})}
    There exists a left forbidden arrow index $\R$ such that $A_{\R}$ is representation-finite.
  \item[{\rm(2)}] {\rm(Corollary \ref{coro:main 3 pre})}
    For all left forbidden arrow indices $\R$, the $\R$-endomorphism algebras $A_{\R}$ is representation-finite.
  \item[{\rm(3)}] {\rm(Corollary \ref{coro:main 3-2})}
    The CM-Auslander algebra $A^{\CMA}$ of $A$ is representation-finite.
\end{itemize}
\end{theorem}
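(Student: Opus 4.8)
The plan is to prove the three-way equivalence by reducing everything, via the earlier structural results, to the known Butler--Ringel criterion that a string algebra is representation-infinite precisely when it admits a band. The logical skeleton is a cycle of implications. First, assume $A$ is representation-infinite; I would produce a band on $(\Q,\I)$ and show that it persists, in a suitable form, on each $(\R(\Q),\R(\I))$. The mechanism here is that by Theorem~\ref{thm:main 1}, each $A_{\R}$ is again an SAG-algebra whose bound quiver is obtained from $(\Q,\I)$ by the explicit local surgery of Steps~1--6 in Subsection~\ref{subsect:R bound quiver}; a string (resp.\ band) of $A$ should lift to a string (resp.\ band) of $A_{\R}$ by rewriting each letter according to how the arrow it traverses is modified. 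Concretely, I would set up a \emph{string-rewriting map} $\Phi_{\R}$ sending walks on $(\Q,\I)$ to walks on $(\R(\Q),\R(\I))$, check it sends strings to strings and reduced cyclic walks to reduced cyclic walks, and conclude it sends a band to a band. This gives: $A$ rep.-infinite $\Rightarrow$ every $A_{\R}$ rep.-infinite, which yields the contrapositive of both (1) and (2).

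For the reverse directions, assume some $A_{\R}$ (for (1)) or $A^{\CMA}$ (for (3)) is representation-infinite, and recover a band on $(\Q,\I)$. Since $\Phi_{\R}$ is built from the local modification recipe, I expect it to be invertible on the relevant part of the walk combinatorics — the new arrows created in Steps~1--6 are in bijection with "pieces" of old arrows together with certain forbidden-path data — so a band on $A_{\R}$ can be pulled back to a band on $A$ by contracting the inserted vertices and re-concatenating. The case of $A^{\CMA}$ is then handled by Theorem~\ref{thm:main 4}: choosing $\R=\perR$ (the index coming from perfect forbidden cycles) gives $A_{\perR}\cong A^{\CMA}$, so (3) is literally the instance $\R=\perR$ of the equivalence in (1)--(2), and no separate argument is needed beyond invoking that isomorphism. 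To assemble the theorem: $A$ rep.-finite $\Rightarrow$ all $A_{\R}$ rep.-finite (by the $\Phi_{\R}$ argument) $\Rightarrow$ some $A_{\R}$ rep.-finite (trivial, provided at least one left forbidden arrow index exists, which Theorem~\ref{thm:main 1} guarantees) $\Rightarrow$ $A$ rep.-finite (by the pull-back argument); and separately $A$ rep.-finite $\Leftrightarrow$ $A_{\perR}\cong A^{\CMA}$ rep.-finite.

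The main obstacle I anticipate is the careful bookkeeping in the band-transfer argument: Steps~1--6 may split one arrow of $\Q$ into several arrows of $\R(\Q)$, introduce new vertices, and alter which length-two paths are zero in $\R(\I)$, so I must verify that (a) the avoidance conditions defining a string — consecutive letters not forming a relation, not being $\alpha\alpha^{-1}$ — are exactly matched under $\Phi_{\R}$ in both directions, and (b) the "reduced and not a proper power" condition defining a band is preserved, which requires ruling out the possibility that the rewriting introduces a spurious periodicity or destroys cyclicity at the seams where inserted vertices sit. A clean way to control this is to observe that $\Phi_{\R}$ and its inverse are length-quasi-isometries on walks (each old letter maps to a bounded-length walk and vice versa), so periodicity is detected identically on both sides; I would isolate this as a lemma ("$\Phi_{\R}$ induces a bijection between bands of $A$ and bands of $A_{\R}$, up to rotation and inversion"), prove it by induction on walk length using the explicit Steps~1--6, and then the theorem follows formally from Butler--Ringel applied on both sides together with Theorems~\ref{thm:main 1} and~\ref{thm:main 4}.
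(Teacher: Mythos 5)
Your proposal is correct and follows essentially the same route as the paper: both reduce everything to the Butler--Ringel band criterion, transfer bands between $(\Q,\I)$ and $(\R(\Q),\R(\I))$ via the arrow-splitting $\alpha\mapsto\alpha_{\Left}\alpha_{\Right}$ (forward by rewriting a band $b$ as $b^{\spl}$, backward by showing a band of $A_{\R}$ must traverse $\alpha_{\Left}$ and $\alpha_{\Right}$ together and hence contracts to a band of $A$), and dispose of the CM-Auslander case by the isomorphism $A_{\perR}\cong A^{\CMA}$ from Theorem~\ref{thm:main 4}. Your packaging of the transfer as a single bijection lemma on bands, with explicit attention to the reducedness and non-power conditions, is a slightly cleaner organization of the same argument the paper carries out in Theorem~\ref{thm:main 2} and Corollary~\ref{coro:main 3 pre}.
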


\section{String algebras, SAG-algebras, and their module categories}

\subsection{String algebras and SAG-algebras}

A {\defines monomial algebra} is a finite dimensional $\kk$-algebra which is Morita equivalent to $\kk\Q/\I$ such that $\I$ is generated by some paths of length $\geqslant 2$.
String algebras are special monomial algebras. In this part, we recall some concepts for string algebras.

Let $\Q$ be a quiver and $\I$ be an ideal of $\kk\Q$ such that $\kk\Q/\I$ is a monomial algebra.
We say a bound quiver $(\Q, \I)$ is a {\defines string pair} if it satisfies the following conditions.
\begin{itemize}
  \item[(S1)$_{\color{white}\Right}$]
     Any vertex of $\Q$ is the source of at most two arrows and the target of at most two arrows.
  \item[(S2)$_\Right$] For each arrow $\alpha:x\to y$, there is at most one arrow $\beta$
   whose source $\s(\beta)$ is $y$ such that $\alpha\beta\notin\I$.
  \item[(S2)$_\Left$] For each arrow $\alpha:x\to y$, there is at most one arrow $\gamma$
   whose target $\t(\gamma)$ is $x$ such that $\gamma\alpha\notin\I$.
\end{itemize}
We say that a bound quiver $(\Q, \I)$ of a monomial algebra is a {\defines almost gentle pair} if it satisfies the following conditions.
\begin{itemize}
  \item[(AG1)] (S2)$_\Right$ and (S2)$_\Left$ holds.
  \item[(AG2)] All generators of the ideal $\I$ are paths of length two.
\end{itemize}

Now we recall the definitions of string algebra, almost gentle algebra, and string almost gentle algebra.

\begin{definition}\rm
Let $A$ be a finite-dimensional algebra. We call that $A$ is a:
\begin{itemize}
  \item[(1)] {\defines string {\rm(}resp., almost gentle{\rm)} algebra}, if $A$ is Morita equivalent to $\kk\Q/\I$ such that $(\Q, \I)$ is a string {\rm(}resp., almost gentle{\rm)} pair;
  \item[(2)] {\defines string almost gentle algebra} (=SAG-algebra), if $A$ is both string and almost gentle.
\end{itemize}
\end{definition}

\begin{example} \label{examp:string} \rm
Let $A=\kk\Q/\I$ be an algebra whose bound quiver $(\Q,\I)$ is shown in \Pic \ref{fig in examp:string}, where
\begin{center}
  $\I = \langle ab, bc, ca, dd', ee', ff',$
  $a'b', b'c', c'a', $

  $e'f, e'c', f'd, f'a', d'e, d'b',$
  $a'eb, b'fc, c'da\rangle$.
\end{center}
\begin{figure}[htbp]
\centering
\begin{tikzpicture}[scale=2]
\draw[   red][dotted][line width = 0.8pt][rotate around={  0:(0,0)}]
  ( 0.51, 0.78) to[out=-60,in=60] ( 0.51,-0.78);
\draw[yellow][dotted][line width = 0.8pt][rotate around={120:(0,0)}]
  ( 0.51, 0.78) to[out=-60,in=60] ( 0.51,-0.78);
\draw[  blue][dotted][line width = 0.8pt][rotate around={240:(0,0)}]
  ( 0.51, 0.78) to[out=-60,in=60] ( 0.51,-0.78);
\draw[   red][dotted][line width = 0.8pt][rotate around={  0:(0,0)}]
  ( 1.50,-0.15) -- ( 1.00, 1.00);
\draw[yellow][dotted][line width = 0.8pt][rotate around={120:(0,0)}]
  ( 1.50,-0.15) -- ( 1.00, 1.00);
\draw[  blue][dotted][line width = 0.8pt][rotate around={240:(0,0)}]
  ( 1.50,-0.15) -- ( 1.00, 1.00);
\draw[   red][dashed][line width = 1.2pt][rotate around={  0:(0,0)}]
  ( 2.15,-0.15) arc (180:225:0.55);
\draw[yellow][dashed][line width = 1.2pt][rotate around={120:(0,0)}]
  ( 2.15,-0.15) arc (180:225:0.55);
\draw[  blue][dashed][line width = 1.2pt][rotate around={240:(0,0)}]
  ( 2.15,-0.15) arc (180:225:0.55);
\draw[   red][dotted][line width = 1.6pt][rotate around={  0:(0,0)}]
  (2.6,0) arc(0:115:2.6) -- (-0.25, 0.83) arc(120:215:1.1);
\draw[yellow][dotted][line width = 2.6pt][rotate around={120:(0,0)}]
  (2.6,0) arc(0:115:2.6) -- (-0.25, 0.83) arc(120:215:1.1);
\draw[blue!55][dotted][line width = 3.6pt][rotate around={240:(0,0)}]
  (2.6,0) arc(0:115:2.6) -- (-0.25, 0.83) arc(120:215:1.1);
\draw[   red][dotted][line width = 0.8pt][rotate around={  0:(0,0)}]
  (2.70,-0.15-0.60) arc( -90:  90:0.60);
\draw[   red][dashed][line width = 0.8pt][rotate around={  0:(0,0)}]
  (2.70,-0.15+0.75) arc(  90:-136:0.75);
\draw[yellow][dotted][line width = 0.8pt][rotate around={120:(0,0)}]
  (2.70,-0.15-0.60) arc( -90:  90:0.60);
\draw[yellow][dashed][line width = 0.8pt][rotate around={120:(0,0)}]
  (2.70,-0.15+0.75) arc(  90:-136:0.75);
\draw[  blue][dotted][line width = 0.8pt][rotate around={240:(0,0)}]
  (2.70,-0.15-0.60) arc( -90:  90:0.60);
\draw[  blue][dashed][line width = 0.8pt][rotate around={240:(0,0)}]
  (2.70,-0.15+0.75) arc(  90:-136:0.75);
\draw[line width = 0.8pt][->] [rotate around={  0:(0,0)}]
  (1,0) arc(0:105:1);
\draw[line width = 0.8pt][->] [rotate around={120:(0,0)}]
  (1,0) arc(0:105:1);
\draw[line width = 0.8pt][->] [rotate around={240:(0,0)}]
  (1,0) arc(0:105:1);
\draw[line width = 0.8pt][->] [rotate around={  0:(0,0)}]
  (2.5,-0.15) -- (1.2,-0.15);
\draw[line width = 0.8pt][->] [rotate around={120:(0,0)}]
  (2.5,-0.15) -- (1.2,-0.15);
\draw[line width = 0.8pt][->] [rotate around={240:(0,0)}]
  (2.5,-0.15) -- (1.2,-0.15);
\draw[line width = 0.8pt][->] [rotate around={  0:(0,0)}]
  (1.1,-0.05) to[out=80,in=0] (-1.1,2.3);
\draw[line width = 0.8pt][->] [rotate around={120:(0,0)}]
  (1.1,-0.05) to[out=80,in=0] (-1.1,2.3);
\draw[line width = 0.8pt][->] [rotate around={240:(0,0)}]
  (1.1,-0.05) to[out=80,in=0] (-1.1,2.3);
\draw[line width = 0.8pt][->] [rotate around={  0:(0,0)}]
  (2.7,0) arc(0:115:2.7);
\draw[line width = 0.8pt][->] [rotate around={120:(0,0)}]
  (2.7,0) arc(0:115:2.7);
\draw[line width = 0.8pt][->] [rotate around={240:(0,0)}]
  (2.7,0) arc(0:115:2.7);
{\tiny
\draw[rotate around={  0:(0,0)}] ( 1.00,-0.15) node{$1$};
\draw[rotate around={120:(0,0)}] ( 1.00,-0.15) node{$2$};
\draw[rotate around={240:(0,0)}] ( 1.00,-0.15) node{$3$};
\draw[rotate around={  0:(0,0)}] ( 2.70,-0.15) node{$4$};
\draw[rotate around={120:(0,0)}] ( 2.70,-0.15) node{$5$};
\draw[rotate around={240:(0,0)}] ( 2.70,-0.15) node{$6$};
\draw[rotate around={  0:(0,0)}] ( 0.45, 0.78) node{$a$};
\draw[rotate around={120:(0,0)}] ( 0.45, 0.78) node{$b$};
\draw[rotate around={240:(0,0)}] ( 0.45, 0.78) node{$c$};
\draw[rotate around={  0:(0,0)}] ( 1.85, 0.05) node{$d$};
\draw[rotate around={120:(0,0)}] ( 1.85, 0.05) node{$e$};
\draw[rotate around={240:(0,0)}] ( 1.85, 0.05) node{$f$};
\draw[rotate around={  0:(0,0)}] ( 0.85, 1.47) node{$d'$};
\draw[rotate around={120:(0,0)}] ( 0.85, 1.47) node{$e'$};
\draw[rotate around={240:(0,0)}] ( 0.85, 1.47) node{$f'$};
\draw[rotate around={  0:(0,0)}] ( 1.45, 2.51) node{$a'$};
\draw[rotate around={120:(0,0)}] ( 1.45, 2.51) node{$b'$};
\draw[rotate around={240:(0,0)}] ( 1.45, 2.51) node{$c'$};
}
\end{tikzpicture}
\caption{The bound quiver of the string algebra given in Example \ref{examp:string}}
\label{fig in examp:string}
(The dashed lines represent the relations in $\I$)
\end{figure}
Then $A$ is a string algebra. In this case, $A$ is not a SAG-algebra because the lengths of relations $a'eb$, $b'fc$, and $c'da$ are $3$.
\end{example}

\subsection{The module categories of string algebras}

In \cite{BR1987}, Butler and Ringel have described all indecomposable modules over string algebra.
In this subsection we recall strings, bands, string modules, and band modules.

For any arrow $a\in \Q_1$, we denote by $a^{-1}$ the {\defines formal inverse} of $a$. Then $\s(a^{-1})=\t(a)$ and $\t(a^{-1})=\s(a)$.
Define $\Q_1^{-1}:=\{a^{-1}\mid a\in \Q_1\}$ be the set of all formal inverses of arrows.
Then any path $p=a_1a_2\cdots a_\ell$ on a bound quiver $(\Q, \I)$ naturally provides a formal inverse path
$p^{-1} = a_{\ell}^{-1}a_{\ell-1}^{-1}\cdots a_1^{-1}$ of $p$.
In particular, for any path $\e_v$ of length zero corresponding to $v\in \Q_0$, we define $\e_v^{-1} = \e_v$.

\begin{definition}\rm
A {\defines string} on a bound quiver $(\Q, \I)$ is a sequence $s=(\wp_1, \wp_2, \ldots, \wp_n)$,
where $\wp_{i} = a_{i,1}\cdots a_{i,l_i}$, $1\le i\le n$, and $a_{i,j}\in \Q_1\cup\Q_1^{-1}$, $1\le j\le l_i$, such that:
\begin{itemize}
  \item[(S]\hspace{-5pt}tr1)
    for any $1\le i\le n$, $\wp_i$ or $\wp_i^{-1}$ is a path on $(\Q, \I)$;

  \item[(S]\hspace{-5pt}tr2)
    if $\wp_i$ is a path, then $\wp_{i+1}$ is a formal inverse path,
    and $a_{i,l_i}\ne a_{i+1,1}^{-1}$;

  \item[(S]\hspace{-5pt}tr3)
    if $\wp_i$ is a formal inverse path, then $\wp_{i+1}$ is a path,
    and $a_{i,l_i}^{-1}\ne a_{i+1,1}$;

  \item[(S]\hspace{-5pt}tr4)
    $\t(\wp_{i})=\s(\wp_{i+1})$ holds for all $1\le i\le n-1$,
    which are called {\defines turning points}.
\end{itemize}
A {\defines band} $b=(\wp_1, \wp_2, \ldots, \wp_n)$ is a string such that:
\begin{itemize}
  \item[(B]
    \hspace{-9pt}
    and1)
    $\t(\wp_n)=\s(\wp_1)$, and if $\wp_n$ and $\wp_1$ are paths then $\wp_n\wp_1\notin \I$, if $\wp_n$ and $\wp_1$ are formal inverse paths then $(\wp_n\wp_1)^{-1} \notin \I$;

  \item[(B]
    \hspace{-9pt}
    and2)
    $b$ is not a non-trivial power of some string, i.e.,
    there is no string $s$ such that $b=s^m$ for some $m\ge 2$.
\end{itemize}
A vertex $v$ on a string $s$ is called a {\defines source} if one of the following condition holds:
\begin{itemize}
  \item[(1)] $v$ is a turning point $\t(\wp_{i})=\s(\wp_{i+1})$ such that $\wp_{i}$ is a formal inverse path and $\wp_{i+1}$ is a path;
  \item[(2)] $\wp_1$ is a path, and $v=\s(s)=\s(\wp_1)$;
  \item[(3)] $\wp_n$ is a formal inverse path, and $v=\t(s)=\t(\wp_n)$.
\end{itemize}
We can define {\defines sink} by dual way.
\end{definition}

\begin{remark}\rm
We can define the {\defines substring} by removing interconnected arrows on both sides of string.
\end{remark}

\begin{definition} \rm \
\begin{itemize}
  \item[(1)] $s$ is called a {\defines trivial string} if it is an empty;
  \item[(2)] two strings $s$ and $s'$ are called {\defines equivalent} if $s'=s$ or $s'=s^{-1}$;
  \item[(3)] two bands $b=\alpha_1\cdots\alpha_n$ and $b'=\alpha_1'\cdots\alpha_t'$ are called {\defines equivalent} if $b[t]=b'$ or $b[t]^{-1}=b'$, where $b[t]=\alpha_{1+t}\cdots\alpha_{n}\alpha_1\cdots\alpha_{1+t-1}$.
\end{itemize}
We denote by $\str(A)$ (resp., $\band(A)$) the set of all equivalent classes of strings (resp., bands) on the bound quiver of $A$, respectively.
\end{definition}

The following result is first shown by Butler and Ringel.

\begin{theorem}[{Butler-Ringel \cite[Section 3]{BR1987}}] \label{thm:BR1987}
All indecomposable objects in category $\modcat(A)$ of a string algebra $A$ can be described by the following bijection
  \[ \M: \str(A) \cup (\band(A)\times\mathscr{J}) \to \ind(\modcat(A)), \]
where $\ind(\modcat(A))$ is the set of all isoclasses of indecomposable $A$-modules and $\mathscr{J}$ is the set of all Jordan block with non-zero eigenvalue.
\end{theorem}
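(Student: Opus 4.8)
The plan is to verify that $\M$ is a well-defined bijection by establishing four things in turn: that the string and band modules are genuine representations of $(\Q,\I)$, that they are indecomposable, that the assignment descends to an injection on equivalence classes, and finally that it is surjective. I would begin with the construction. To a string $s=(\wp_1,\ldots,\wp_n)$ one associates its underlying walk $u_0-u_1-\cdots-u_m$ in $\Q$, where consecutive vertices are joined by a letter $a_{i,j}\in\Q_1\cup\Q_1^{-1}$. The string module $\M(s)$ places a one-dimensional space $\kk z_i$ at the vertex $u_i$, and lets an arrow $\alpha$ send $z_i\mapsto z_{i+1}$ (or $z_{i+1}\mapsto z_i$) precisely when $\alpha$ is the letter joining $u_i$ and $u_{i+1}$, acting by zero on every other basis vector. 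Conditions (Str1)--(Str4), combined with (S1), (S2)$_\Right$ and (S2)$_\Left$, are exactly what guarantee that at each vertex no arrow is forced into a composition lying in $\I$, so $\M(s)$ respects the relations. For a band $b$ one runs the same construction along the cyclic walk, but closes it up: one places a copy of a Jordan-block space at each vertex of $b$ and lets the return letter from the end of $b$ to its start act through the Jordan block $\phi$. Here (Band1) is precisely the compatibility of this closing-up with $\I$, and (Band2) rules out redundancy.

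Second, to prove indecomposability I would compute endomorphism algebras. For a string module, every endomorphism is a linear combination of \emph{graph maps}, each arising from a substring of $s$ that occurs simultaneously as a bottom piece and as a top piece (an admissible overlap in the Butler--Ringel sense). A direct analysis of how these maps compose shows that $\End_A(\M(s))$ is a local ring, being spanned by the identity together with a nilpotent ideal, so $\M(s)$ is indecomposable. For a band module $\M(b,\phi)$, indecomposability reduces to indecomposability of $\phi$: any idempotent in $\End_A(\M(b,\phi))$ induces an idempotent commuting with the return action $\phi$, and since $\phi$ is a single Jordan block with nonzero eigenvalue the only such idempotents are $0$ and $1$.

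Third, injectivity on equivalence classes follows by reconstructing the combinatorial datum from the module. From $\M(s)$ one recovers the walk, and hence $s$ up to the ambiguity $s\sim s^{-1}$, by reading off the socle/top filtration and the action of $\Q_1$ on the canonical basis; for a band module one additionally recovers the size and nonzero eigenvalue of $\phi$ from the map induced by going once around $b$, the remaining ambiguity being exactly cyclic rotation $b[t]$ and inversion. Thus $\M$ is injective on $\str(A)\cup(\band(A)\times\mathscr{J})$, and the values on strings and on bands are disjoint because string modules have trivial return action while band modules do not.

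The hard part is surjectivity: showing every indecomposable lies in the image. Here I would use the functorial filtration method of Gabriel and Ringel, exploiting the special biserial structure encoded in (S1), (S2)$_\Right$, (S2)$_\Left$. The point is that at each vertex at most two arrows enter and two leave, and the monomial relations mean that on any module $M$ the arrow actions can be organized along walks: for each arrow-or-inverse one defines functorial subspaces of $M$ built from the kernels and images of the relevant arrow actions, refined according to which compositions are admissible, and shows these assemble into a filtration of $M$ whose subquotients are governed entirely by walks in $\Q$. Arguing by induction on $\dim_{\kk} M$ together with the Krull--Schmidt theorem, one shows an indecomposable $M$ must collapse to a single string or band module: a ``free end'' in the filtration forces a string module, whereas the absence of any free end forces a cyclic pattern whose return automorphism is indecomposable, i.e.\ a single Jordan block with nonzero eigenvalue, giving a band module. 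The main obstacle, and where essentially all the work concentrates, is checking that the chosen subspaces are genuinely functorial and that the filtration is fine enough to pin the walk down uniquely; this is exactly the step where the string-algebra hypotheses (S2)$_\Right$ and (S2)$_\Left$, controlling which compositions of arrows survive modulo $\I$, are used decisively.
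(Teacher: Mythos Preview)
Your proof sketch is a reasonable outline of the standard Butler--Ringel argument via functorial filtrations, but there is nothing to compare it against: the paper does not prove Theorem~\ref{thm:BR1987} at all. It is stated as a quoted result from \cite[Section 3]{BR1987} and used as a black box throughout. So in the context of this paper no proof is expected here, and your proposal, while broadly correct in spirit, goes well beyond what the paper requires.
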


Notice that we can define strings and bands on any monomial pair $(\Q,\I)$
and each indecomposable module corresponded by string and band
is called a {\defines string module} and {\defines band module}, respectively.
However, the set $\ind(\modcat A)$, where $A=\kk\Q/\I$, of all isoclasses of indecomposable $A$-modules

A string $s$ can be written as
\begin{center}
  $\xymatrix{
   & & \bullet \ar@{~}[r]^{s_1} \ar[ld]_{a} & \bullet \ar[rd]^{b} & & & \bullet & \cdots \\
  \cdots & \bullet & & & \bullet \ar@{~}[r]_{s_2} & \bullet \ar@{<-}[ru]_{c} & & &\\
  }$
\end{center}
up to equivalence by using arrows $a\in\Q_1$ on bound quiver $(\Q,\I)$.
In this case, the substring $s_1$ (resp., $s_2$) is said to be the {\defines factor substring}
(resp., {\defines image substring}) of $s$ (respect to the pair $(a,b)$ (resp., $(b,c)$)).
In particular, if $a$ does not exist, that is, $s$ is of the form
\begin{center}
  $\xymatrix{
   & & \bullet \ar@{~}[r]^{s_1} & \bullet \ar[rd]^{b} & & & \bullet & \cdots \\
   & & & & \bullet \ar@{~}[r]_{s_2} & \bullet \ar@{<-}[ru]_{c} & & &\\
  }$
\end{center}
then $s_1$ is said to be the factor substring of $s$ respect to the pair $(0,b)$.
We can define factor substring of $s$ respect to the pair $(a,0)$,
image substring of $s$ respect to the pair $(b,0)$,
and image substring of $s$ respect to the pair $(0,c)$ by similar way.

Factor and image substrings can be used to describe the homomorphisms between two string modules as the following
result, see for example \cite[Theorem in page 191]{Kra1991} and \cite[Chapter 2, Section 2, 2.4.2]{Lak2016}.

\begin{theorem} \label{thm:Kra1991} \
\begin{itemize}
\item[\rm(1)]
For two string modules corresponded by strings $s_1$ and $s_2$,
$\Hom_A(\M(s_2),$ $\M(s_1)) \ne 0$
if and only if there is a factor substring $q$ of $s_2$ and an image substring $p$ of $s_1$ such that
$q$ and $p$ coincide.

\item[\rm(2)]
Furthermore, take two string $s_1$ and $s_2$ as following:
\begin{center}
\begin{tikzpicture}[xscale=0.75]
\draw [->] (-0.8, 0.8) -- (-0.2, 0.2);
\draw [->] ( 1.8, 0.8) -- ( 1.2, 0.2);
\fill ( 0, 0) circle (2pt) ( 1, 0) circle (2pt)
      (-1, 1) circle (2pt) ( 2, 1) circle (2pt);
\draw [domain=0.15:0.85,samples=160,smooth]
      plot (\x,{0.05*cos(\x*2*pi*300)});
\draw [shift={(2,1)}][dotted][line width=1pt]
      (0.2,0) -- (0.8,0);
\draw [shift={(-4,0)}][->] (-0.8, 0.8) -- (-0.2, 0.2);
\draw [shift={(-4,0)}][->] ( 1.8, 0.8) -- ( 1.2, 0.2);
\fill [shift={(-4,0)}]
      ( 0, 0) circle (2pt) ( 1, 0) circle (2pt)
      (-1, 1) circle (2pt) ( 2, 1) circle (2pt);
\draw [shift={(-4,0)}][domain=0.15:0.85,samples=160,smooth]
      plot (\x,{0.05*cos(\x*2*pi*300)});
\draw [dotted][line width=1pt](-1.8,1)--(-1.2,1);
\draw [shift={(-6,1)}][dotted][line width=1pt]
      (0.2,0) -- (0.8,0);
\draw [shift={(4,0)}][->] (-0.8, 0.8) -- (-0.2, 0.2);
\draw [shift={(4,0)}][->] ( 1.8, 0.8) -- ( 1.2, 0.2);
\fill [shift={(4,0)}]
      ( 0, 0) circle (2pt) ( 1, 0) circle (2pt)
      (-1, 1) circle (2pt) ( 2, 1) circle (2pt);
\draw [shift={(4,0)}][domain=0.15:0.85,samples=160,smooth]
      plot (\x,{0.05*cos(\x*2*pi*300)});
\draw [shift={(8,0)}][->] (-0.8, 0.8) -- (-0.2, 0.2);
\draw [shift={(8,0)}][->] ( 1.8, 0.8) -- ( 1.2, 0.2);
\fill [shift={(8,0)}]
      ( 0, 0) circle (2pt) ( 1, 0) circle (2pt)
      (-1, 1) circle (2pt) ( 2, 1) circle (2pt);
\draw [shift={(8,0)}][domain=0.15:0.85,samples=160,smooth]
      plot (\x,{0.05*cos(\x*2*pi*300)});
\draw [shift={(6,1)}][dotted][line width=1pt]
      (0.2,0) -- (0.8,0);
\draw [shift={(10,1)}][dotted][line width=1pt]
      (0.2,0) -- (0.8,0);
{\tiny
\draw[shift={(-4.0,0)}] (0.5,0) node[above]{$p_{1}$};
\draw[shift={( 0.0,0)}] (0.5,0) node[above]{$p_{2}$};
\draw[shift={( 2.0,0)}] (0.5,0) node[above]{$\cdots$};
\draw[shift={( 4.0,0)}] (0.5,0) node[above]{$p_{m-1}$};
\draw[shift={( 8.0,0)}] (0.5,0) node[above]{$p_{m}$};
\draw[shift={(-4.0,0)}] (-0.5,0.5) node[left]{$a_{1}$};
\draw[shift={(-2.0,0)}] (-0.5,0.5) node[left]{$a_{1}'$};
\draw[shift={( 0.0,0)}] (-0.5,0.5) node[left]{$a_{2}$};
\draw[shift={( 1.0,0)}] ( 0.5,0.5) node[left]{$a_{2}'$};
\draw[shift={( 4.0,0)}] (-0.5,0.5) node[left]{$a_{m-1}'$};
\draw[shift={( 5.5,0)}] ( 0.5,0.5) node[left]{$a_{m-1}$};
\draw[shift={( 8.0,0)}] (-0.5,0.5) node[left]{$a_{m}'$};
\draw[shift={( 9.0,0)}] ( 0.5,0.5) node[left]{$a_{m}$};
}
\draw (-7,0) node{$s_1=$};
\end{tikzpicture}

\

\begin{tikzpicture}[xscale=0.75]
\draw [<-] (-0.8,-0.8) -- (-0.2,-0.2);
\draw [<-] ( 1.8,-0.8) -- ( 1.2,-0.2);
\fill ( 0, 0) circle (2pt) ( 1, 0) circle (2pt)
      (-1,-1) circle (2pt) ( 2,-1) circle (2pt);
\draw [domain=0.15:0.85,samples=160,smooth]
      plot (\x,{0.05*cos(\x*2*pi*300)});
\draw [shift={(2,-1)}][dotted][line width=1pt]
      (0.2,0) -- (0.8,0);
\draw [shift={(-4,0)}][<-] (-0.8,-0.8) -- (-0.2,-0.2);
\draw [shift={(-4,0)}][<-] ( 1.8,-0.8) -- ( 1.2,-0.2);
\fill [shift={(-4,0)}]
      ( 0,-0) circle (2pt) ( 1,-0) circle (2pt)
      (-1,-1) circle (2pt) ( 2,-1) circle (2pt);
\draw [shift={(-4,0)}][domain=0.15:0.85,samples=160,smooth]
      plot (\x,{0.05*cos(\x*2*pi*300)});
\draw [dotted][line width=1pt](-1.8,-1)--(-1.2,-1);
\draw [shift={(-6,-1)}][dotted][line width=1pt]
      (0.2,0) -- (0.8,0);
\draw [shift={(4,0)}][<-] (-0.8,-0.8) -- (-0.2,-0.2);
\draw [shift={(4,0)}][<-] ( 1.8,-0.8) -- ( 1.2,-0.2);
\fill [shift={(4,0)}]
      ( 0,-0) circle (2pt) ( 1,-0) circle (2pt)
      (-1,-1) circle (2pt) ( 2,-1) circle (2pt);
\draw [shift={(4,0)}][domain=0.15:0.85,samples=160,smooth]
      plot (\x,{0.05*cos(\x*2*pi*300)});
\draw [shift={(8,0)}][<-] (-0.8,-0.8) -- (-0.2,-0.2);
\draw [shift={(8,0)}][<-] ( 1.8,-0.8) -- ( 1.2,-0.2);
\fill [shift={(8,0)}]
      ( 0, 0) circle (2pt) ( 1, 0) circle (2pt)
      (-1,-1) circle (2pt) ( 2,-1) circle (2pt);
\draw [shift={(8,0)}][domain=0.15:0.85,samples=160,smooth]
      plot (\x,{0.05*cos(\x*2*pi*300)});
\draw [shift={(6,-1)}][dotted][line width=1pt]
      (0.2,0) -- (0.8,0);
\draw [shift={(10,-1)}][dotted][line width=1pt]
      (0.2,0) -- (0.8,0);
{\tiny
\draw[shift={(-4.0,0)}] (0.5,0) node[below]{$q_{1}$};
\draw[shift={(-2.0,0)}] (0.5,0) node[below]{$\cdots$};
\draw[shift={( 0.0,0)}] (0.5,0) node[below]{$q_{2}$};
\draw[shift={( 2.0,0)}] (0.5,0) node[below]{$\cdots$};
\draw[shift={( 4.0,0)}] (0.5,0) node[below]{$q_{m-1}$};
\draw[shift={( 6.0,0)}] (0.5,0) node[below]{$\cdots$};
\draw[shift={( 8.0,0)}] (0.5,0) node[below]{$q_{m}$};
\draw[shift={(-4.0,0)}] (-0.5,-0.5) node[left]{$b_{1}$};
\draw[shift={(-2.0,0)}] (-0.5,-0.5) node[left]{$b_{1}'$};
\draw[shift={( 0.0,0)}] (-0.5,-0.5) node[left]{$b_{2}$};
\draw[shift={( 1.0,0)}] ( 0.5,-0.5) node[left]{$b_{2}'$};
\draw[shift={( 4.0,0)}] (-0.5,-0.5) node[left]{$b_{m-1}$};
\draw[shift={( 5.5,0)}] ( 0.5,-0.5) node[left]{$b_{m-1}'$};
\draw[shift={( 8.0,0)}] (-0.5,-0.5) node[left]{$b_{m}$};
\draw[shift={( 9.0,0)}] ( 0.5,-0.5) node[left]{$b_{m}'$};
}
\draw (-7,0) node{$s_2=$};
\end{tikzpicture}
\end{center}
where all $p_r$ are image substrings respect to $(a_r,a_{r}')$ of $s_1$,
and all $q_r$ are factor substrings respect to $(b_r,b_{r}')$ of $s_2$  $(1\leqslant r\leqslant m)$.
If
\begin{itemize}
  \item $p_{1}=q_{1}$, $p_{2}=q_{2}$, $\ldots$, $p_{m}=q_{m}$,
  \item and for other image substring $p$ of $s_1$ which does not is a substring of any $p_r$,
    there is no factor substring $q$ of $s_2$ such that $p=q$,
\end{itemize}
then
\[ \dim_{\kk}\Hom_A(\M(s_2), \M(s_1)) = m. \]
{\rm(}All pairs $(p_r,q_r)$ describe the basis of $\Hom_A(\M(s_2), \M(s_1))$ as $\kk$-linear space.{\rm)}
\end{itemize}
\end{theorem}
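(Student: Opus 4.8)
The plan is to prove parts~(1) and~(2) simultaneously by producing an explicit $\kk$-basis of $\Hom_A(\M(s_2),\M(s_1))$ indexed by the matched pairs $(p_r,q_r)$ of part~(2); part~(1) is then just the dichotomy between $m\geqslant 1$ and $m=0$. First I would fix the concrete model of a string module: for a string $s$ whose underlying walk visits vertices $z_0,z_1,\dots,z_\ell$ of $\Q$, the module $\M(s)$ has $\kk$-basis $\{z_0,\dots,z_\ell\}$ on which an arrow $a$ acts by $z_{i-1}\mapsto z_i$ when the $i$-th letter of $s$ is $a$, by $z_i\mapsto z_{i-1}$ when it is $a^{-1}$, and by $0$ on every other basis vector; the string conditions (S1), (S2)$_{\Left}$, (S2)$_{\Right}$ make this a well-defined $A$-module. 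Recall also the standard dictionary: a submodule of $\M(s)$ that is itself a string module is exactly $\M(p)$ for an image substring $p$ of $s$ (the span of the $z_i$ over an interval closed under the arrow action), and a string-module quotient is exactly $\M(q)$ for a factor substring $q$.

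Next, for each matched pair the equality $q_r=p_r$ of strings gives a canonical composite
\[ f_r\colon \M(s_2)\twoheadrightarrow \M(q_r)=\M(p_r)\hookrightarrow \M(s_1), \]
namely the quotient map attached to the factor substring $q_r$, the tautological identification $\M(q_r)=\M(p_r)$, and the inclusion attached to the image substring $p_r$. Each $f_r$ is a nonzero $A$-homomorphism, which already proves the ``if'' part of~(1). For $\kk$-linear independence of $f_1,\dots,f_m$ I would run a support argument on suitable basis vectors: if $z_{j_r}$ corresponds to a source vertex of $q_r$, then $f_r$ sends $z_{j_r}$ to the basis vector of $\M(s_1)$ at the corresponding source of $p_r$, and the hypothesis that no $p_r$ is a substring of another forces the matched intervals to be pairwise non-nested, so that this basis vector cannot be produced by any combination of the remaining $f_{r'}$.

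The substantive step is the reverse containment: every $f\in\Hom_A(\M(s_2),\M(s_1))$ is a $\kk$-linear combination of $f_1,\dots,f_m$. I would analyse $f$ along the walk of $s_2$: each maximal stretch between two consecutive turning points is a unidirectional ``arm'' generating a local uniserial subquotient, and commutativity of $f$ with every arrow forces the image of the top of such an arm to travel through $\M(s_1)$ along a stretch whose letter sequence coincides with that of the arm, i.e.\ along a common substring. Propagating these matchings across the turning points, and using (S1), (S2)$_{\Left}$, (S2)$_{\Right}$ to exclude any branching, one finds that $f$ coincides, arm by arm, with a $\kk$-multiple of one of the maps $f_r$; moreover an image substring of $s_1$ lying in none of the $p_r$ cannot be hit at all, which is precisely the second hypothesis of~(2). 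Hence $f=\sum_{r=1}^m c_r f_r$, and combined with the previous paragraph this gives $\dim_\kk\Hom_A(\M(s_2),\M(s_1))=m$, with part~(1) following at once.

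I expect the only real difficulty to be the combinatorial bookkeeping in this last step --- systematically distinguishing direct from inverse letters, sources from sinks (hence factor substrings from image substrings), and the non-maximal matchings already absorbed into some $p_r$ --- rather than any single conceptual point. For a fully detailed execution one can instead cite the corresponding analyses in \cite{Kra1991} and \cite{Lak2016}.
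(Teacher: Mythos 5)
The paper offers no proof of this statement at all: it is quoted as a known result, with pointers to \cite[Theorem on p.~191]{Kra1991} and \cite[2.4.2]{Lak2016}, so there is no in-paper argument to compare yours against. Your sketch is exactly the standard ``graph map'' proof found in those references: realize $\M(s)$ on the basis indexed by the vertices of the walk, attach to each matched pair $(p_r,q_r)$ the composite $\M(s_2)\twoheadrightarrow\M(q_r)=\M(p_r)\hookrightarrow\M(s_1)$, check independence, and then do the combinatorial work of showing these span. The construction and the independence argument are sound (one small point: linear independence of graph maps attached to \emph{distinct} admissible pairs holds unconditionally and does not need any non-nesting hypothesis; the maximality hypothesis in (2) is only needed, as you correctly use it later, to guarantee that the $m$ listed pairs exhaust all admissible pairs, i.e.\ for the spanning/counting direction). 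The one genuinely substantive step --- that an arbitrary homomorphism decomposes as a linear combination of the $f_r$ --- is only sketched in your proposal and explicitly deferred to the literature; that is where essentially all of the work in Krause's and Laking's treatments lives, so as a self-contained proof your text is incomplete, but as a reconstruction of the intended justification it is faithful to what the paper itself relies on. A minor imprecision worth fixing: not every submodule of $\M(s)$ isomorphic to a string module is of the form $\M(p)$ for an image substring $p$ (direct sums of such, and accidental isomorphisms, occur); what you need and what is true is only that each image substring determines a canonical submodule and each factor substring a canonical quotient, which is all the construction of $f_r$ uses.
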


\subsection{Cycles}

A path $p = a_1\cdots a_n$ on a quiver $\Q$ is said to be {\defines forbidden} if $a_ia_{i+1} \in \I$ holds for all $1\le i\le n-1$.
The arrows $a_1, \ldots, a_{n-1}$ are called {\defines left forbidden arrows}
and the arrows $a_2, \ldots, a_n$ are called {\defines right forbidden arrows}.

Next, we recall the definition of forbidden cycle.

\begin{definition} \rm
Let $\overline{\Q}$ be the underlying graph\footnote{Recall that the {\defines underlying graph} $\overline{\Q}$ of $\Q$ is obtained from $\Q$ by forgetting the orientation of the arrows. Each $\overline{\alpha}$, the arrow $\alpha$ forgetting orientation, is called an {\defines edges} of $\overline{\Q}$.} of $\Q$.
A {\defines cycle} $\C$ (of length $n$) on $n$ vertices $v_1,\ldots, v_n \in \Q_0$ is a sequence of $n$ edges $\overline{c}_1, \ldots, \overline{c}_n$ of $\overline{\Q}$ such that the vertices of $\C$ can be arranged in a cyclic sequence in such a way that two vertices $v_i$ and $v_{i+1}$ are adjacent connected by the arrow $c_i$ if they are consecutive in the sequence, and are nonadjacent otherwise (the indices $i$ are taken modulo $n$ if necessary).
An {\defines oriented cycle} is a cycle $\C=c_1\cdots c_n$ with $\t(c_{i})=\s(c_{i+1})$  $(1\le i<n)$ such that $\t(\C)=\t(c_n)=\s(c_1)=\s(\C)$ holds.
Furthermore, $\C$ is called a {\defines forbidden cycle} if there are relations $r_0, r_1, \cdots, r_{d-1}$ of $\I$ such that $c_1c_2$, $\ldots$, $c_{n-1}c_n$, $c_nc_1$ $\in\C$.
A {\defines cycle without relation} is a cycle $\C$ such that all paths on $\C$ are not in $\I$.
\end{definition}

\begin{remark} \rm \
\begin{itemize}
  \item[(1)] Forbidden paths are introduced by Avella-Alaminos and Geiss in \cite{AG2008} which are used to describe AG-invariants of gentle algebras.
    The terminology ``forbidden cycle'' and ``forbidden arrow'' come from forbidden path.

  \item[(2)] Each cycle without relation provide a band.
\end{itemize}
\end{remark}

\section{The module $\alpha A$}
In this section, we consider the $A$-module $\alpha A$, where $\alpha$ is an arrow on the string pair $(\Q, \I)$.

\subsection{$\alpha A$ is an indecomposable module}

We introduce the module $\alpha A$ and show that it is an indecomposable module in this part.

\begin{lemma}\label{lemm:alphaA}
For any arrow $\alpha \in \Q_1$ on a string pair $(\Q, \I)$, we have:
\begin{itemize}
  \item[\rm(1)]
    $\alpha A \leqslant_{\oplus} \rad(e_{\s(\alpha)}A)$,
    where $e_{\s(\alpha)}$ is the idempotent corresponded by $\s(\alpha)$, and
  \item[\rm(2)] $\alpha A$ is an indecomposable module.
\end{itemize}
\end{lemma}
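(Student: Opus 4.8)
The plan is to work entirely with the combinatorial description of modules over string algebras via strings. First I would observe that $e_{\s(\alpha)}A$, as a right $A$-module, is the projective cover of the simple at $\s(\alpha)$, and over a string algebra every indecomposable projective is a string module whose underlying string is obtained by reading off the (at most two) maximal nonzero paths starting at $\s(\alpha)$. Concretely, if $\alpha\colon \s(\alpha)\to\t(\alpha)$ and (say) $\beta\colon\s(\alpha)\to\bullet$ are the arrows out of $\s(\alpha)$ — with one of them possibly absent — then $e_{\s(\alpha)}A = \M(s_\beta^{-1}\,s_\alpha)$ where $s_\alpha$ (resp.\ $s_\beta$) is the unique maximal nonzero path with first arrow $\alpha$ (resp.\ $\beta$); uniqueness here is exactly condition (S2)$_\Right$. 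The radical $\rad(e_{\s(\alpha)}A)$ is then the string module on the string obtained by deleting the source vertex $\s(\alpha)$, which decomposes as the direct sum of the string modules on $s_\alpha$ and on $s_\beta$ (the two "arms" are no longer joined once the peak is removed). For part (1) I would then identify $\alpha A$ with the submodule of $e_{\s(\alpha)}A$ generated by the image of $\alpha$, i.e.\ with $\M(s_\alpha)$, which is visibly one of these two direct summands; hence $\alpha A \leqslant_{\oplus}\rad(e_{\s(\alpha)}A)$. (If $\alpha$ is the only arrow out of $\s(\alpha)$, then in fact $\alpha A = \rad(e_{\s(\alpha)}A)$, still a direct summand.)

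For part (2), once $\alpha A$ has been identified with $\M(s_\alpha)$ for a genuine path $s_\alpha$ on $(\Q,\I)$ — in particular $s_\alpha$ is a well-defined string — indecomposability is immediate from Theorem \ref{thm:BR1987}: every string module $\M(s)$ is indecomposable. Alternatively, if one prefers an argument not invoking the full Butler–Ringel classification, one can compute $\End_A(\M(s_\alpha))$ using Theorem \ref{thm:Kra1991}: the only factor substring of $s_\alpha$ that equals an image substring of $s_\alpha$ in the required compatible way is $s_\alpha$ itself (since $s_\alpha$ is a single path with no "valleys", its image substrings are its initial subpaths and its factor substrings are its terminal subpaths, and the maximality/overlap condition forces the pair to be $(s_\alpha,s_\alpha)$), so $\dim_\kk\End_A(\M(s_\alpha)) = 1$, whence the endomorphism algebra is local and $\M(s_\alpha)=\alpha A$ is indecomposable.

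The one point requiring care — and the main obstacle — is the bookkeeping behind "$\alpha A = \M(s_\alpha)$ with $s_\alpha$ a path." One must check that $s_\alpha := \alpha\,a_2\cdots a_k$, where at each step $a_{i+1}$ is the unique arrow with $a_i a_{i+1}\notin\I$ and the process stops when no such arrow exists, is indeed finite (this uses finite-dimensionality of $A$, equivalently that $\I$ contains a power of the arrow ideal) and that the resulting right $A$-module structure on $\alpha A = \alpha\kk\Q/(\alpha\kk\Q\cap\I)$ matches the string-module structure on $\M(s_\alpha)$: a $\kk$-basis of $\alpha A$ is $\{\alpha, \alpha a_2, \alpha a_2 a_3,\ldots, s_\alpha\}$, with the action of an arrow $b$ sending a basis element $p$ to $pb$ if $pb\notin\I$ and to $0$ otherwise, which is precisely the description of $\M(s_\alpha)$. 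I would also note explicitly that the freedom "one of $\alpha,\beta$ absent" (i.e.\ $\s(\alpha)$ being the source of only one arrow) is harmless, and that condition (S2)$_\Right$ is exactly what guarantees the choice of each $a_{i+1}$, hence of $s_\alpha$, is unambiguous. With these identifications in hand, both statements follow as above.
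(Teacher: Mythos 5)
Your proposal is correct and follows essentially the same route as the paper: both identify $e_{\s(\alpha)}A$ with the string module having a peak at $\s(\alpha)$ and (at most) two descending maximal arms, recognize $\alpha A$ as the arm determined by $\alpha$ --- hence a direct summand of $\rad(e_{\s(\alpha)}A)$ --- and deduce indecomposability from the fact that a string module on a single directed path is indecomposable. One small notational slip: the relevant summand is the string module on $s_\alpha$ with its first arrow $\alpha$ deleted (your own basis $\{\alpha,\alpha a_2,\dots\}$ shows $\dim_\kk \alpha A$ equals the length of $s_\alpha$, not that length plus one), but this does not affect the argument.
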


\begin{proof}
First of all, we show that there exists an injection
\begin{align}
 \sigma:\ & \alpha A=\sum\limits_{\wp \in \Q_{\ge 0}  \atop \t(\alpha)=\s(\wp)} \kk \alpha\wp
 \xymatrix{\ar@{^(->}[r]^{\subseteq}&}
 e_{\s(\alpha)}A
 = \sum\limits_{\tilde{\wp} \in \Q_{\ge 0} \atop \s(\tilde{\wp}) = \s(\alpha)} \kk\tilde{\wp}.   \nonumber
\end{align}

Any path $\alpha\wp$ in $\alpha A$ is a path with source $\s(\alpha)$.
By the definition of string pair, $\s(\alpha)$ is a source of at most two arrows,
and then we obtain two cases as follows.
\begin{itemize}
  \item[(1)] There are two arrows $a_1$ and $a_1'$ such that $\s(a_1)=\s(a_1')=\s(\alpha)$
    ($\alpha$ equals to either $a_1$ or $a_1'$).
    In this case, $e_{\s(\alpha)}A$ is the indecomposable module corresponding to some string which is of the form
      \[ \bullet \larrow{a_m'} \bullet \cdots \bullet \larrow{a_2'} \bullet \larrow{a_1'}
         \bullet \rarrow{a_1} \bullet \rarrow{a_2} \bullet \cdots \bullet \rarrow{a_n} \bullet \]
    and satisfies the following conditions:
    \begin{itemize}
      \item $\t(a_m')$ is a sink point of $\Q$, or there is an integer $1\leqslant i\leqslant m$ such that $a_i'a_{i+1}'\cdots a_m'a_{m+1}' \in \I$ holds for any arrow $a_{m+1}'$ with source $\s(a_{m+1}') = \t(a_m')$;
      \item $\t(a_n)$ is a sink point of $\Q$, or there is an integer $1\leqslant j \leqslant n$ such that $a_ja_{j+1}\cdots a_na_{n+1}\in \I$ holds for any arrow $a_{n+1}$ with source $\s(a_{n+1}) = \t(a_n)$.
     \end{itemize}
    Without loss of generality, assume that $\alpha=a_1$, then $\alpha A$ is the module corresponding to the string
    \[\bullet \rarrow{a_2} \bullet \cdots \bullet\rarrow{a_n} \bullet\]
    which is a direct summand of $\rad (e_{\s(\alpha)}A)$.

  \item[(2)] The arrow $\alpha$, written as $a$, is a unique arrow with source $\s(\alpha)$.
    In this case, $e_{\s(\alpha)}A$ is the indecomposable module corresponding to some string which is of the form in this case
      \[ \bullet \rarrow{a_1} \bullet \rarrow{a_2} \bullet \cdots \bullet \rarrow{a_n} \bullet. \]
    Thus, $\alpha A \le_{\oplus} \rad (e_{\s(\alpha)}A)$ can be given by the string $a_2\cdots a_n$ corresponding to $\alpha A$.
    This case can be seen as the case (1) with $m=0$.
\end{itemize}
By the above two cases, it is easy to see that $\alpha A$ is an indecomposable module.
\end{proof}

\subsection{Homomorphisms starting (resp. ending) with $\alpha A$}

A module is said to be an {\defines {\rm(}indecomposable{\rm)} arrowed module} if it isomorphic to $\alpha A$ for some $\alpha\in \Q_1$.
Let $\arrmod(A)$ be the set of all arrowed modules. The following lemma shows that
any homomorphism $h_{\alpha}$ induced by $\alpha$ between two indecomposable projective modules
$P(\s(\alpha))$ and $P(\t(\alpha))$ is a morphism crossing $\alpha A$.

\begin{lemma} \label{lemm:decomp}
For arbitrary arrow $\alpha\in\Q_1$ with source $\s(\alpha)=v$ and target $\t(\alpha)=w$, the morphism $h_{\alpha}: P(w) \to P(v)$ induced by $\alpha$ has a decomposition
\[\xymatrix{
P(w) \ar[rr]^{h_{\alpha}} \ar[rd]_{g} & & P(v).  \\
& \alpha A \ar[ru]_f &
}\]
\end{lemma}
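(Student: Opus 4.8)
The plan is to build the factorization by hand from the explicit description of projectives and of $\alpha A$ as string modules. First I would recall that $P(v) = e_v A$ and $P(w) = e_w A$, and that the map $h_\alpha$ induced by $\alpha$ is right multiplication by $\alpha$, i.e. $h_\alpha(e_w x) = \alpha x$ for $x \in A$; this is well defined since $\t(\alpha) = w = \s(e_w x)$ forces the composite $\alpha x$ to start at $v$, so $\alpha x \in e_v A$. By Lemma \ref{lemm:alphaA}(1), the image of $h_\alpha$ is exactly $\sum_{\wp} \kk\,\alpha\wp = \alpha A$, viewed as the submodule $\alpha A \leqslant_\oplus \rad(e_v A) \leqslant e_v A$. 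Therefore $h_\alpha$ corestricts to a surjection $g \colon P(w) \twoheadrightarrow \alpha A$, and composing with the inclusion $f \colon \alpha A \hookrightarrow P(v)$ (which exists by Lemma \ref{lemm:alphaA}(1)) gives $h_\alpha = f \circ g$, which is the desired decomposition.

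The one point that needs a short argument is that $g$ is genuinely a well-defined $A$-module homomorphism onto $\alpha A$ and not merely a $\kk$-linear surjection onto the image of $h_\alpha$; but this is immediate once we observe that $\image(h_\alpha) = \alpha A$ as right $A$-modules, which is precisely what the proof of Lemma \ref{lemm:alphaA} established by exhibiting $\alpha A$ as the string submodule spanned by the paths $\alpha\wp$. So $g$ is just $h_\alpha$ with its codomain restricted to its image, hence an $A$-module epimorphism, and $f$ is the structural inclusion of that image into $P(v)$.

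I expect the main (and only) obstacle to be bookkeeping: one must make sure that the identification of $\image(h_\alpha)$ with $\alpha A$ respects the module structure, and that the two cases of Lemma \ref{lemm:alphaA} (whether $\s(\alpha)$ is the source of one or two arrows) do not require separate treatment here — and indeed they do not, since in both cases $\alpha A$ sits inside $\rad(e_v A) \subseteq e_v A = P(v)$ as a direct summand of the radical, so the inclusion $f$ is available uniformly. Thus the proof reduces to the two-line observation $h_\alpha = f \circ g$ with $g$ the corestriction to the image and $f$ the inclusion, and there is no further difficulty.
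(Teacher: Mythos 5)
Your proposal is correct and is essentially the paper's own argument: the paper likewise defines $g$ as the corestriction $e_w a \mapsto \alpha a$ onto the submodule $\alpha A$ and $f$ as the inclusion $\alpha a \mapsto e_v\cdot\alpha a$ into $e_v A = P(v)$, so $h_\alpha = fg$. (Only a terminological quibble: with the paper's composition convention $h_\alpha$ is \emph{left} multiplication by $\alpha$ on the right module $e_wA$, as your displayed formula $h_\alpha(e_w x)=\alpha x$ correctly shows.)
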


\begin{proof}
For arbitrary $a\in A$, the homomorphism $h_{\alpha}$ induced by $\alpha\in\Q_1$ sends any $\e_wa \in \e_w A$ to $\alpha\cdot \e_w a = \e_v\cdot \alpha a \in \e_vA$.
It follows a decomposition
$\xymatrix@C=0.6cm{ \e_wA \ar@{->}[r]^{g} & \alpha A \ar@{->}[r]^{f} & \e_vA }$
of $h_{\alpha}$ satisfying
$\xymatrix{ \e_wa \ar@{|->}[r]^{g} & \alpha a \ar@{|->}[r]^{f} & \e_v\cdot \alpha a }$
as required.
\end{proof}

\begin{lemma}
\label{lemm:decomp-eA}
Keep the notations from Lemma \ref{lemm:decomp}.
The homomorphisms $f$ and $g$ can not be decomposed through any indecomposable projective module $P(u)$ $(\not\cong P(w))$.
\end{lemma}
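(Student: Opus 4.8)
The plan is to argue by contradiction. Suppose that $f\colon \alpha A \to P(v)$ factors as $\alpha A \xrightarrow{f_1} P(u) \xrightarrow{f_2} P(v)$ for some vertex $u$ with $P(u)\not\cong P(w)$; the argument for $g$ will be dual (or can be reduced to the statement for $f$ over the opposite algebra, since $(\Q,\I)^{\op}$ is again a string pair). The key point is to track what happens to the generator. By Lemma \ref{lemm:decomp} the composite $f g\colon P(w)\to P(v)$ is $h_\alpha$, which sends $\e_w$ to $\alpha$; and $g$ sends $\e_w$ to the generator $\alpha = \alpha\e_{\t(\alpha)}$ of $\alpha A$. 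So $f$ sends the generator of $\alpha A$ to the element $\alpha \in \e_v A$, i.e. $f$ is, up to the identification of $\alpha A$ with a submodule of $\e_v A$ from Lemma \ref{lemm:alphaA}, simply the inclusion. In particular $f$ is injective and its image is the right ideal $\alpha A \subseteq \e_v A$.

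Next I would use that $P(u) = \e_u A$ is also a string module, described as in the proof of Lemma \ref{lemm:alphaA}, and that any homomorphism $f_1\colon \alpha A \to \e_u A$ of string modules is governed by Theorem \ref{thm:Kra1991}: it is nonzero only if some factor substring of the string of $\alpha A$ coincides with some image substring of the string of $\e_u A$. Write $\s(\alpha)=v$, $\t(\alpha)=w$. The string of $\alpha A$ starts at $w$ (it is the string $a_2\cdots a_n$ in the notation of the proof of Lemma \ref{lemm:alphaA}, with $w = \t(\alpha) = \s(a_2)$ being its unique source on the projective side), so every image substring of $\alpha A$ begins at the vertex $w$ and every nonzero image of $f_1$ is supported on vertices reachable from $w$. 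Meanwhile $f_2\colon \e_u A \to \e_v A$ is nonzero, so its image is a nonzero submodule of $\e_v A$ starting at $v$, and the image of the composite $f = f_2 f_1$ — which we have already identified as $\alpha A$, generated in degree $\e_v$ by $\alpha$ — must be hit. Composing the two descriptions: the top of $\e_u A$, namely the simple $S(u)$, must survive far enough that $f_2$ maps it onto (a quotient containing) the top $S(v)$ of $\alpha A$. But the top of $\alpha A$ is $S(v)$ only because $\alpha A$ is generated by $\alpha$ in degree $v$; concretely, $f_2(\text{top of }\e_uA)$ lies in $\e_v A$, and for the composite to have image exactly $\alpha A$ one is forced to have $u = w$, contradicting $P(u)\not\cong P(w)$. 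I would make this precise by chasing generators: the generator $\e_u$ of $\e_u A$ maps under $f_1$ into $\alpha A$, say to $\alpha p$ for some path $p$ with $\s(p) = w$, and under $f_2 f_1$ to $\alpha p \in \e_v A$; since $f_2 f_1 = f$ must be the inclusion with image $\alpha A$, and $\alpha A$ is generated by $\alpha = \alpha\e_w$, surjectivity of $f$ onto $\alpha A$ forces $p = \e_w$ and hence $\t(\alpha p) = \s(\e_u) $ i.e. the length-zero path forces $u$ to be $\s(\e_u)$; tracing $\s$ and $\t$ through $f_1$ then pins $u = w$.

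The cleanest way to package the above, and the route I would actually write up, is: (i) identify $f$ with the inclusion $\alpha A \hookrightarrow \e_v A$ via Lemma \ref{lemm:alphaA}(1) and the generator-chase from Lemma \ref{lemm:decomp}; (ii) observe that if $f = f_2 f_1$ through $\e_u A$, then $f_1$ is injective (since $f$ is) and $f_2$ restricted to $\image f_1 \cong \alpha A$ is injective with image $\alpha A$; (iii) note $\image f_1$ is a submodule of $\e_u A$ isomorphic to $\alpha A$, and by Lemma \ref{lemm:alphaA} together with the structure of string modules over a string pair, a submodule of $\e_u A$ isomorphic to the indecomposable $\alpha A$ and admitting an embedding into $\e_v A$ sending its generator to $\alpha$ forces $\e_u A$ to have $\alpha A$ as a quotient-then-submodule compatibly — and the only way the top $S(u)$ of $\e_u A$ maps onto the top $S(v)$ of $\alpha A$ while $\alpha A$ is generated in degree $v$ by an element of $\rad\e_v A$ is $u = w$. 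The dual statement for $g\colon \e_w A \to \alpha A$: $g$ is the canonical surjection, so a factorization $\e_w A \to \e_u A \to \alpha A$ would make $\e_u A$ surject onto $\alpha A$ with $\e_w A \twoheadrightarrow \e_u A$, forcing $S(w)$ onto $S(u)$ and then $u = w$.

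The main obstacle is step (iii): making rigorous the claim that an intermediate projective cannot "reshuffle" the string of $\alpha A$. The honest proof of this should not appeal to vague "tops" but to Theorem \ref{thm:Kra1991}: write out the factor/image substrings. The generator of $\alpha A$ sits at vertex $v$ (as a sub of $\e_v A$) and at vertex $w$ (as a quotient of $\e_w A$); any factorization through $\e_u A$ produces a factor substring of $\alpha A$ equal to an image substring of $\e_u A$, and then an image substring of $\e_u A$ equal to a factor substring of $\e_v A$, and the only such configuration that glues up to the identity-on-$\alpha A$ map uses the trivial factor/image substring at the generator, whose ambient vertex in $\e_u A$ must then be the source of the whole string of $\e_u A$, namely $u$; tracing this back through the first coincidence yields $u = w$. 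I expect this bookkeeping — keeping straight which vertex is a source/sink of which string, and that $\e_u A$ and $\e_w A$ are uniserial-or-biserial string modules of the shape given in Lemma \ref{lemm:alphaA} — to be the technical heart, but it is routine given the tools already assembled.
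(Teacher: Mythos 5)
Your treatment of $g$ is essentially correct and follows a genuinely different (and cleaner) route than the paper's: you identify $g$ with the canonical surjection $P(w)\twoheadrightarrow\alpha A$ coming from $\alpha=\alpha e_{\t(\alpha)}\in(\alpha A)e_w$, note that in any factorization $P(w)\to P(u)\to\alpha A$ of a surjection the second factor must be surjective, and conclude $u=w$ because $\alpha A$ is local with top $S(w)$. (Your intermediate claim that $P(w)\to P(u)$ would itself be surjective is false, but it is also not needed.) The paper instead runs an explicit string-combinatorial case analysis: it writes out the strings of $P(w)$, $P(v)$, $P(u)$ and $\alpha A$, splits into the cases where $u$ is off or on the string of $\alpha A$, kills the first case with Theorem \ref{thm:Kra1991}, and in the second evaluates $fg$ on $a_1\cdots a_{i-1}$, using the relation $c_la_1\in\I$ forced by (S2)$_{\Left}$ to obtain $0\neq\alpha a_1\cdots a_{i-1}=0$. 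Your argument buys brevity and avoids the substring bookkeeping entirely for this half.

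The $f$ half of your proposal has a genuine gap. You assert that the top of $\alpha A$ is $S(v)$ because $\alpha A$ is ``generated in degree $v$''; this is wrong: as a right module $\alpha A$ is generated by $\alpha\in(\alpha A)e_w$, so its top is $S(w)$, and the dual-to-$g$ ``top'' argument simply does not apply to the injection $f$ (no surjection onto $\alpha A$ is in play). Your generator chase also conflates the two factorizations --- you evaluate $f_1$ on $e_u$, although in your setup $f_1$ has domain $\alpha A$, not $P(u)$ --- and the conclusion ``$u=\s(\e_u)$'' is vacuous. A correct elementary repair does exist: a factorization $f=f_2f_1$ through $P(u)$ gives $f_1(\alpha)\in e_uAe_w$ and $f_2\in\Hom_A(e_uA,e_vA)\cong e_vAe_u$, so $\alpha=f_2f_1(\alpha)$ lies in the span of products $pq$ with $p$ a path from $v$ to $u$ and $q$ a path from $u$ to $w$; since $\alpha$ has length one and paths form a basis of the monomial algebra $A$, one of $p,q$ must be trivial, forcing either $u=w$ or $u=v$ with $f_2$ having a unit component in the local ring $\End_A(P(v))$, i.e.\ a trivial factorization. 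Without this (or the paper's substring bookkeeping, to which you defer but which you do not carry out), the $f$ case is not established.
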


\begin{proof}
Assume that the string corresponding to $P(w)$ is
\[ \M^{-1}(P(w)) = \ w_{m_0}' \larrow{a_{m_0-1}'} \cdots \larrow{a_2'} w_2' \larrow{a_1'}
  w_1 \rarrow{a_1} w_2 \rarrow{a_2} \cdots \rarrow{} w_{m-1} \rarrow{a_{m-1}} w_m. \]
Here, $w_1=w$. Then the string corresponding to $P(v)$ is of the form
\[ \M^{-1}(P(v)) = \ v_n \larrow{b_{n-1}} \cdots \larrow{b_2} v_2 \larrow{b_1} v_1 \rarrow{\alpha} w_1 \rarrow{} \cdots. \]
By the definition of string algebra, we have $\alpha a_1' \in \I$, see \Pic \ref{fig:decomp-eA-1}.

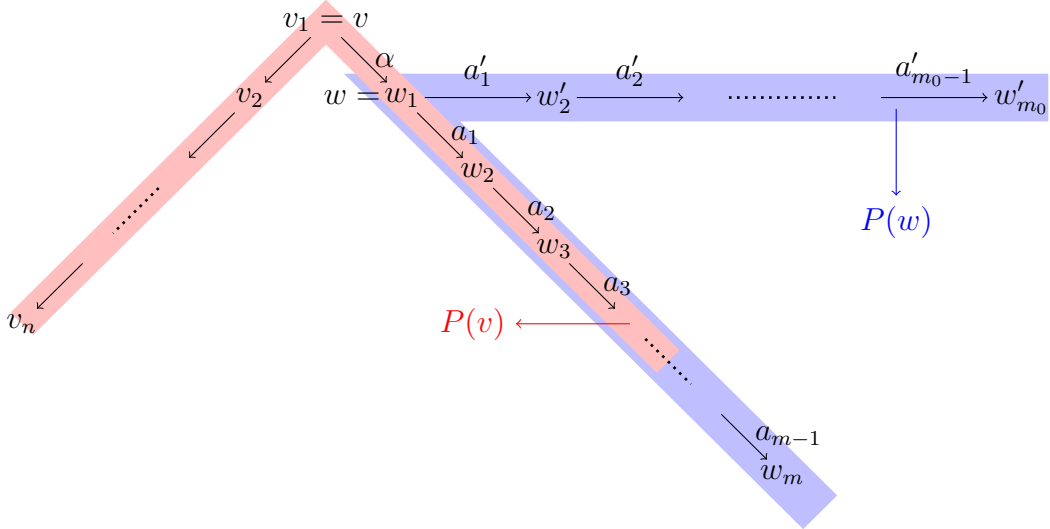
\begin{figure}[htbp]
\centering
\begin{tikzpicture}
%
\draw[blue!25][line width=18pt] (8.5,0) -- (0,0) -- (5.5,-5.5);
\draw[blue][->] (6.5,-0.15) -- (6.5,-1.3);
\draw[blue] (6.5,-1.3) node[below]{$P(w)$};
\draw[red!25][line width=12pt] (-5,-3) -- (-1,1) -- (3.5,-3.5);
\draw[red][->] (3.,-3.)--(1.5,-3);
\draw[red] (1.5,-3) node[left]{$P(v)$};
\draw                      (0,0) node[left]{$w=\ $};
\draw    [shift={( 0, 0)}] (0,0) node{$w_1$};
\draw    [shift={( 1,-1)}] (0,0) node{$w_2$};
\draw    [shift={( 2,-2)}] (0,0) node{$w_3$};
\draw    [shift={( 5,-5)}] (0,0) node{$w_m$};
\draw    [shift={( 2, 0)}] (0,0) node{$w_2'$};
\draw    [shift={( 8, 0)}] (0.15,0) node{$w_{m_0}'$};
\draw[->][shift={( 0, 0)}] (0.2,-0.2) -- (0.8,-0.8);
\draw[->][shift={( 1,-1)}] (0.2,-0.2) -- (0.8,-0.8);
\draw[->][shift={( 2,-2)}] (0.2,-0.2) -- (0.8,-0.8);
\draw    [shift={( 3,-3)}] [line width=1pt][dotted]
                           (0.2,-0.2) -- (0.8,-0.8);
\draw[->][shift={( 4,-4)}] (0.2,-0.2) -- (0.8,-0.8);
\draw    [shift={( 0, 0)}] (0.5,-0.5) node[right]{$a_1$};
\draw    [shift={( 1,-1)}] (0.5,-0.5) node[right]{$a_2$};
\draw    [shift={( 2,-2)}] (0.5,-0.5) node[right]{$a_3$};
\draw    [shift={( 4,-4)}] (0.5,-0.5) node[right]{$a_{m-1}$};
\draw[->][shift={( 0, 0)}] (0.3, 0.0) -- (1.7, 0.0);
\draw[->][shift={( 2, 0)}] (0.3, 0.0) -- (1.7, 0.0);
\draw    [shift={( 4, 0)}] [line width=1pt][dotted]
                           (0.3, 0.0) -- (1.7, 0.0);
\draw[->][shift={( 6, 0)}] (0.3, 0.0) -- (1.7, 0.0);
\draw    [shift={( 0, 0)}] (1.0, 0.0) node[above]{$a_1'$};
\draw    [shift={( 2, 0)}] (1.0, 0.0) node[above]{$a_2'$};
\draw    [shift={( 6, 0)}] (1.0, 0.0) node[above]{$a_{m_0-1}'$};
\draw[->][shift={(-1, 1)}] (0.2,-0.2) -- (0.8,-0.8);
\draw    [shift={(-1, 1)}] (0.5,-0.5) node[right]{$\alpha$};
\draw    [shift={(-1, 1)}] (0,0) node{$v_1=v$};
\draw    [shift={(-2, 0)}] (0,0) node{$v_2$};
\draw    [shift={(-5,-3)}] (0,0) node{$v_n$};
\draw[->][shift={(-1, 1)}] (-0.2,-0.2) -- (-0.8,-0.8);
\draw[->][shift={(-2, 0)}] (-0.2,-0.2) -- (-0.8,-0.8);
\draw    [shift={(-3,-1)}] [line width=1pt][dotted]
                           (-0.2,-0.2) -- (-0.8,-0.8);
\draw[->][shift={(-4,-2)}] (-0.2,-0.2) -- (-0.8,-0.8);
\end{tikzpicture}
\caption{The strings respectively corresponding to $P(v)$ and $P(w)$}
\label{fig:decomp-eA-1}
\end{figure}

Next, we show that $g$ is a homomorphism does not through any indecomposable projective module $P$ ($\not\cong P(w)$).
To do this, we assume that $g$ has a decomposition $g=g_1g_2$ such that the following diagram
\[\xymatrix{
P(w) \ar[dd]_{g_2} \ar[rr]^{h_{\alpha}} \ar[rd]_{g} & & P(v)\\
& \alpha A \ar[ru]_f & \\
P(u) \ar[ru]_{g_1} & &
}\]
commutes. Then there is a path $\wp$ on $(\Q,\I)$ whose source and sink respectively are $u$ and $w$,
such that $g_2: P(w)=\e_wA \to P(u)=\e_uA$, written as $h_{\wp}$,
sends each $\e_wa \in \e_wA$ to $\wp\cdot \e_w a = \e_u(\wp a)\in \e_u A$, i.e.,
\begin{align}\label{formula:decomp-eA}
  g_2(\e_wa) = \wp a
\end{align}
Notice that we have the following two facts.
\begin{itemize}
  \item[(1)]
    The module $\alpha A$ is a string module satisfying
    \[\M^{-1}(\alpha A) =\ w_1 \rarrow{a_1} w_2 \cdots \rarrow{a_{\tilde{m}-1}} w_{\tilde{m}}\
    (1\leqslant \tilde{m} \leqslant m), \]
    and the factor substring of $\M^{-1}(\alpha A)$ respect to $(0, a_1)$ is $\e_w$,
    see the mark (I) in \Pic \ref{fig:decomp-eA-2};
  \item[(2)]
    The string corresponded by $P(u)$ is of the form
    \begin{center}
      $\M^{-1}(P(u)) =\ \cdots \larrow{} u
      \xymatrix{ \ar@{~>}[r]^{\wp}& }
      w \rarrow{a_1'} w_2' \rarrow{} \cdots \rarrow{a_{\tilde{m}_0-1}'} w_{\tilde{m}_0}' $
    \end{center}
    where $0\leqslant \tilde{m}_0 \leqslant m_0$, and
    $\wp = c_1\cdots c_l$ ($c_1,\ldots, c_l \in \Q_1$) is a path such that,
    by the definition of string algebra and $\alpha a_1\notin\I$, we have
    \begin{align}\label{formula:decomp-eA 2}
      c_la_1\in\I
    \end{align}
    see the mark (II) in \Pic \ref{fig:decomp-eA-2}.
\end{itemize}
\begin{figure}[htbp]
\centering
\begin{tikzpicture}
\draw[blue!25][line width=20pt] (8.5,0) -- (0,0) -- (5.5,-5.5);
\draw[blue][->] (7.5,-.15) -- (7.5,-1.3);
\draw[blue] (7.5,-1.3) node[below]{$P(w)$};
\draw[green!50][line width=12pt] (-0.5,-3.5) -- (-2,-2) -- (0,0) -- (5,0);
\draw[green][->] (4,0) -- (4,-1.3);
\draw[green] (4,-1.3) node[below]{$P(u)$};
\draw[red!25][line width=12pt] (-5,-3) -- (-1,1) -- (3.5,-3.5);
\draw[red][->] (3,-2.7)--(4.8,-2.7);
\draw[red] (4.8,-2.7) node[right]{$P(v)$};
%
\draw[purple][rotate around={-45:(-2.5+4.5,0-1.5)}]
     [line width=1pt][shift={( 0.05,-0.35)}]
     (-2.5+4.5,0-1.5) ellipse (2.9cm and 0.7cm);
\draw[purple][->] (3.65,-3) -- (4.8,-3);
\draw[purple] (4.8,-3) node[right]{$\alpha A \mathop{\mapsto}\limits^{\M} a_1a_2\cdots a_{\tilde{m}}$};
\draw[purple] (4.9,-3) node[below]{(I)};
\draw (4.5,-0.5) node{(II)};
\draw[gray] (-2,-2) -- (3,-3);
\draw[gray] [shift={(-0.05,-0.05)}] (-2,-2) -- (3,-3);
\draw[gray] (0.5,-2.5) node[below]{(III) $u=w_i$};
\draw                      (0,0) node[left]{$w=\ $};
\draw    [shift={( 0, 0)}] (0,0) node{$w_1$};
\draw    [shift={( 1,-1)}] (0,0) node{$w_2$};
\draw    [shift={( 2,-2)}] (0,0) node{$w_{i-1}$};
\draw    [shift={( 3,-3)}] (0,0) node{$w_i$};
\draw    [shift={( 5,-5)}] (0,0) node{$w_m$};
\draw    [shift={( 2, 0)}] (0,0) node{$w_2'$};
\draw    [shift={( 8, 0)}] (0.15,0) node{$w_{m_0}'$};
\draw[->][shift={( 0, 0)}] (0.2,-0.2) -- (0.8,-0.8);
\draw    [shift={( 1,-1)}] [line width=1pt][dotted]
                           (0.2,-0.2) -- (0.8,-0.8);
\draw[->][shift={( 2,-2)}] (0.2,-0.2) -- (0.8,-0.8);
\draw    [shift={( 3,-3)}] [line width=1pt][dotted]
                           (0.2,-0.2) -- (0.8,-0.8);
\draw[->][shift={( 4,-4)}] (0.2,-0.2) -- (0.8,-0.8);
\draw    [shift={( 0, 0)}] (0.5,-0.5) node[right]{$a_1$};
\draw    [shift={( 2,-2)}] (0.5,-0.5) node[right]{$a_{i-1}$};
\draw    [shift={( 4,-4)}] (0.5,-0.5) node[right]{$a_{m-1}$};
\draw[->][shift={( 0, 0)}] (0.3, 0.0) -- (1.7, 0.0);
\draw[->][shift={( 2, 0)}] (0.3, 0.0) -- (1.7, 0.0);
\draw    [shift={( 4, 0)}] [line width=1pt][dotted]
                           (0.3, 0.0) -- (1.7, 0.0);
\draw[->][shift={( 6, 0)}] (0.3, 0.0) -- (1.7, 0.0);
\draw    [shift={( 0, 0)}] (1.0, 0.0) node[above]{$a_1'$};
\draw    [shift={( 2, 0)}] (1.0, 0.0) node[above]{$a_2'$};
\draw    [shift={( 6, 0)}] (1.0, 0.0) node[above]{$a_{m_0-1}'$};
\draw[->][shift={(-1, 1)}] (0.2,-0.2) -- (0.8,-0.8);
\draw    [shift={(-1, 1)}] (0.5,-0.5) node[right]{$\alpha$};
\draw    [shift={(-1, 1)}] (0,0) node{$v_1=v$};
\draw    [shift={(-2, 0)}] (0,0) node{$v_2$};
\draw    [shift={(-5,-3)}] (0,0) node{$v_n$};
\draw[->][shift={(-1, 1)}] (-0.2,-0.2) -- (-0.8,-0.8);
\draw[->][shift={(-2, 0)}] (-0.2,-0.2) -- (-0.8,-0.8);
\draw    [shift={(-3,-1)}] [line width=1pt][dotted]
                           (-0.2,-0.2) -- (-0.8,-0.8);
\draw[->][shift={(-4,-2)}] (-0.2,-0.2) -- (-0.8,-0.8);
\draw (-2,-2) node{$u$};
\draw [->][domain=-2.0:0,samples=160,smooth]
      [line width=2pt][rotate around={45:(-1,0)}][shift={(-0.8,-0.8)}]
      plot (\x,{0.05*cos(\x*2*pi*150)});
\draw (-1,-1) node[below right]{$\wp$};
\draw[->][shift={( 0, 0)}] (-1.8,-2.2) -- (-1.2,-2.8);
\draw    [shift={( 1,-1)}]  [line width=1pt][dotted]
                           (-1.8,-2.2) -- (-1.2,-2.8);
\end{tikzpicture}
\caption{If $g$ can be decomposed through $P(u)$ ($\not\cong P(w)$)}
\label{fig:decomp-eA-2}
\end{figure}
Now we consider the canonical decomposition of $g_1: P(u) \to \alpha A$,
it is easy to see that the image $\image(g_1)$ is a string module
whose string is a factor substring of $\M^{-1}(P(u))$ which is of the form
\begin{center}
  $ \ \cdots \larrow{} u
    \xymatrix{ \ar@{~>}[r]^{\wp}& }
   w \rarrow{a_1'} w_2' \rarrow{} \cdots \rarrow{a_{\tilde{m}_1-1}'} w_{\tilde{m}_1}' $,
\end{center}
where $0\le \tilde{m}_1\le \tilde{m}_0$ $(\le m_0)$, see \Pic \ref{fig:decomp-eA-2}.
We obtain two cases as follows.
\begin{itemize}
  \item[(A)] The vertex $u$ is not a vertex on the string $\M^{-1}(\alpha A)$;
  \item[(B)] The vertex $u$ is a vertex on the string $\M^{-1}(\alpha A)$.
\end{itemize}

In case (A), we obviously have $g_1=0$ by Theorem \ref{thm:Kra1991} (1). This is a contradiction.

Now we show that (B) admits a contradiction and end my proof.
In ths case, we obtain $u = w_i$ for some $1\le i\le \tilde{m}$.
It follows that $a_{i-1}$ is an arrow ending with $u$,
see the mark (III) in \Pic \ref{fig:decomp-eA-2}.
One can check that $\M^{-1}(P(u))$ has a factor substring coincides with
an image substring of $\M^{-1}(\alpha A)$, it describe the homomorphism
\begin{center}
  $g_1: P(u)=\e_uA \to \alpha A$,  $\e_u a \mapsto \alpha a_1\cdots a_{i-1} \e_{w_i} a$ ($\forall a\in A$)
\end{center}
which is non-zero.
However, $g_2$ sends each element $\e_wa$ ($\forall a\in A$) in $P(w)$ to
the element $\wp a$ in $P(v)$, see (\ref{formula:decomp-eA}). Thus,
\begin{center}
  $g(\e_w a) = g_1g_2(\e_w a) = g_1(\wp a) = \alpha a_1\cdots a_{i-1} \wp a $
\end{center}
It follows that
\begin{align*}
    fg(a_1\cdots a_{i-1})
  & = f(\alpha a_1\cdots a_{i-1} \wp a_1\cdots a_{i-1}) \\
  & = \alpha a_1\cdots a_{i-1} \wp a_1\cdots a_{i-1} = 0\ (\text{by } (\ref{formula:decomp-eA 2}))
\end{align*}
By using $h_{\alpha}=fg$, we have
\begin{align*}
      fg(a_1\cdots a_{i-1})
  & = h_{\alpha}(a_1\cdots a_{i-1}) \\
  & = \alpha a_1\cdots a_{i-1} \ne 0.
\end{align*}
We obtain a contradiction.

We can show that $f$ can not be decomposed through any indecomposable projective module by similar way.
\end{proof}

\begin{lemma}
\label{lemm:decomp-alphaA}
Keep the notations from Lemma \ref{lemm:decomp}.
For any $\beta\in\Q_1$, the homomorphisms $f$ and $g$ can not be decomposed through $\beta A$ $(\not\cong \beta A)$.
\end{lemma}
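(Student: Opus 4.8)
The plan is to follow the template of the proof of Lemma~\ref{lemm:decomp-eA}, now with the arrowed module $\beta A$ in place of the indecomposable projective $P(u)$. The two tools I would use are the combinatorial description of homomorphisms between string modules (Theorem~\ref{thm:Kra1991}) and the fact, coming from Lemma~\ref{lemm:alphaA}, that every $\gamma A$ is the string module of an honest path, so that both $\alpha A$ and $\beta A$ are uniserial. Write $\M^{-1}(\alpha A)$ as the path $w_1\to w_2\to\cdots\to w_{\tilde m}$ with arrows $a_1,\ldots,a_{\tilde m-1}$ and $w_1=\t(\alpha)=w$, and $\M^{-1}(\beta A)$ as the path $u_1\to\cdots\to u_{\tilde\ell}$ with arrows $b_1,\ldots,b_{\tilde\ell-1}$ and $u_1=\t(\beta)$. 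As in Lemma~\ref{lemm:decomp-eA}, ``decomposed through $\beta A$'' is read as ``a composite of two non-isomorphisms with middle term $\beta A$'', and throughout we assume $\beta A\not\cong\alpha A$ (otherwise the assertion is vacuous).

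For $g$, suppose $g=g_1g_2$ with $g_2: P(w)\to\beta A$ and $g_1:\beta A\to\alpha A$, both non-isomorphisms. Since $g$ is the projective cover $e_wA\twoheadrightarrow\alpha A$, $e_wa\mapsto\alpha a$, it is surjective, hence so is $g_1$. A uniserial quotient of the uniserial module $\beta A$ has the form $\beta A/\rad^{k}(\beta A)=\M(u_1\to\cdots\to u_k)$, so surjectivity of $g_1$ forces $\M^{-1}(\alpha A)$ to be an initial subpath of $\M^{-1}(\beta A)$; as $\beta A\not\cong\alpha A$ this subpath is proper, whence $\tilde\ell>\tilde m$ and, in particular, $u_1=w_1=w$, i.e.\ $\t(\beta)=\t(\alpha)=w$. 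If $\tilde m\geqslant 2$ we obtain in addition $b_1=a_1$; since $b_1$ is the unique arrow out of $\t(\beta)=w$ with $\beta b_1\notin\I$, this says $\beta a_1\notin\I$, while $\alpha a_1\notin\I$ because $a_1$ is the first arrow of $\M^{-1}(\alpha A)$. Thus $\alpha$ and $\beta$ are two arrows ending at $w=\s(a_1)$ that compose nontrivially with $a_1$, so (S2)$_\Left$ applied to $a_1$ forces $\alpha=\beta$, hence $\beta A=\alpha A$, a contradiction. The remaining (degenerate) case $\tilde m=1$, in which $\alpha A\cong S_w$, is not covered by this argument; it would have to be settled directly, by recording the action of $g_1$ and $g_2$ on generators and invoking $h_\alpha=fg$ together with the analogue of~(\ref{formula:decomp-eA}). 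I expect this case to be the main obstacle.

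For $f$, suppose $f=f_1f_2$ with $f_2:\alpha A\to\beta A$ and $f_1:\beta A\to P(v)$, both non-isomorphisms. Since $f$ is the inclusion $\alpha A\hookrightarrow e_vA$, $\alpha a\mapsto\alpha a$, it is injective, hence so is $f_2$; by uniseriality $f_2(\alpha A)$ is a bottom submodule of $\beta A$, so $\M^{-1}(\alpha A)$ is a terminal subpath of $\M^{-1}(\beta A)$. Now $\image(f_1)$ is a submodule of $P(v)$ that is uniserial (being a quotient of $\beta A$) and contains $f_1f_2(\alpha A)=f(\alpha A)=\alpha A$. The only uniserial submodules of $P(v)$ containing $\alpha A=\alpha\cdot e_vA$ are $\alpha A$ itself and, when $v$ has a unique outgoing arrow (necessarily $\alpha$), the whole of $P(v)$. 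If $\image(f_1)=\alpha A$, then $f_1$ retracts the monomorphism $f_2$, so $f_2$ is an isomorphism onto the indecomposable module $\beta A$, contradicting $\beta A\not\cong\alpha A$. If $\image(f_1)=P(v)$, then $\beta A$ surjects onto the uniserial module $P(v)$, and comparing strings gives $\M^{-1}(\beta A)=\M^{-1}(P(v))$; hence $\beta A\cong P(v)$ and the surjection $f_1$ is in fact an isomorphism, again a contradiction. This completes the plan.
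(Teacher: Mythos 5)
Your treatment of $f$ is correct, and it takes a genuinely different route from the paper's: the paper proves the $f$-statement by the string calculus of Theorem \ref{thm:Kra1991}, matching factor and image substrings and running through cases (A), (B.1), (B.2) with explicit formulas for $f_1,f_2$, whereas you get it from uniseriality of $\beta A$ together with the observation that the only uniserial submodules of $P(v)$ containing $\alpha A$ are $\alpha A$ and (when $P(v)$ is itself uniserial) $P(v)$, each of which forces a splitting and hence an isomorphism. That is shorter and, I think, more robust. Note also that the paper's own proof of this lemma only argues the $f$-half; the $g$-half is dismissed as ``similar'', so on that half you are not competing with an actual written argument.

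The case $\tilde m=1$ that you flag in the $g$-half is a genuine gap, and it cannot be ``settled directly'': in that configuration the asserted statement fails. Suppose $\alpha A\cong S(w)$, i.e.\ $\alpha\delta\in\I$ for every arrow $\delta$ with $\s(\delta)=w$, and let $\beta$ be the other arrow with $\t(\beta)=w$, with some arrow $\delta_1$ out of $w$ satisfying $\beta\delta_1\in\I$ and some arrow $\delta_2$ out of $w$ satisfying $\beta\delta_2\notin\I$. A concrete SAG-algebra realizing this has vertices $1,\dots,5$, arrows $\alpha:1\to3$, $\beta:2\to3$, $\delta_1:3\to4$, $\delta_2:3\to5$, and $\I=\langle\alpha\delta_1,\alpha\delta_2,\beta\delta_1\rangle$; here both $\alpha$ and $\beta$ are left forbidden arrows and $\beta A\not\cong\alpha A$. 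Then $g:e_wA\to\alpha A$, $e_wa\mapsto\alpha a$, coincides with the composite of $g_2:e_wA\to\beta A$, $e_wa\mapsto\beta a$ (surjective but not injective, since it kills $\delta_1$) with the top projection $g_1:\beta A\to\beta A/\rad(\beta A)\cong S(w)\cong\alpha A$ (not injective, since it kills $\beta\delta_2$): both maps send $e_w$ to $\alpha$ and both annihilate $\rad(e_wA)$. So $g$ does factor through $\beta A$ with two non-isomorphisms, and consequently $g$ lies in $\rad^2$ of $\mathrm{add}(M_{\R})$ for $\R=\{\alpha,\beta\}$, which already breaks the quiver description of Theorem \ref{thm:main 1} for this $\R$. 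In other words, your instinct that this case is ``the main obstacle'' is exactly right, but the obstacle is a counterexample rather than a missing computation; any repair must impose an extra hypothesis ruling out two left forbidden arrows in $\R$ sharing a target with $\alpha A$ a proper quotient of $\beta A$ (for instance, working only with $\R=\perR$ sidesteps this example, since the quiver above has no forbidden cycles).
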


The proof of the above proposition is similar to that of Lemma \ref{lemm:decomp-alphaA}.
In the proof of Lemma \ref{lemm:decomp-alphaA},
we prove that $g$ can not be decomposed through any indecomposable projective module $P(u)$ ($\not\cong P(w)$).
Now, we show that $f$ can not be decomposed through any $\beta A$ ($\not\cong \alpha A$).

\begin{proof}
Similar to the proof of Lemma \ref{lemm:decomp-eA}, assume
\[ \M^{-1}(P(w)) =\ w_{m_0}' \larrow{a_{m_0-1}'} \cdots \larrow{a_2'} w_2' \larrow{a_1'}
  w_1 \rarrow{a_1} w_2 \rarrow{a_2} \cdots \rarrow{} w_{m-1} \rarrow{a_{m-1}} w_m \]
and
\[ \M^{-1}(P(v)) =\ v_n \larrow{b_{n-1}} \cdots \larrow{b_2} v_2 \larrow{b_1} v_1
   \rarrow{\alpha} w_1 \rarrow{a_1} \cdots \rarrow{a_{m'-1}} w_{m'} \]
($1 \leqslant m' \leqslant m$) which are shown in  \Pic \ref{fig:decomp-eA-1}.

Next, we show that $f$ is a homomorphism does not through any $\beta A$ ($\forall \beta\in \Q_1$, $\beta\ne \alpha$).
To do this, we assume that $f$ has a decomposition $f=f_1f_2$ such that the following diagram
\[\xymatrix{
P(w) \ar[rr]^{h_{\alpha}} \ar[rd]_{g} & & P(v)\\
& \alpha A \ar[ru]_f \ar[rd]_{f_2} & \\
& & \beta A \ar[uu]_{f_1}
}\]
commutes.
Notice that the string $\M^{-1}(\beta A)$ is of the form
\begin{center}
  $ u_1 \rarrow{c_1} u_2 \rarrow{c_2} \cdots \rarrow{c_{l-1}} u_l$,
\end{center}
then any image substring of it is of the form $c_{l'}c_{l'+1}\cdots c_{l-1}$
($1\leqslant l'\leqslant l-1$, $\t(\beta)=\s(c_{l'})$,
in the case of $l'=l-1$ we take image substring is $\e_{u_l}$),
and, by Theorem \ref{thm:Kra1991} (1) and $f_2 \ne 0$, there is a factor substring of
$\M^{-1}(\alpha A) = a_1a_2\cdots a_{\tilde{m}}$,
written as $a_1a_2\cdots a_j$ ($0 \leqslant j \leqslant \tilde{m}$), coincides with
some image substring of $\M^{-1}(\beta A)$.
It follows that
\begin{center}
  $c_{l'} = a_1$, $c_{l'+1} = a_2$, $\ldots$, $c_l = a_j$

  hold for some $0\leqslant j\leqslant \tilde{m}$, where $j=l-l'+1$

  (see mark ``Case (A)'' in \Pic \ref{fig:decomp-alphaA}).
\end{center}
\begin{figure}[htbp]
\centering
\begin{tikzpicture}
\draw[blue!25][line width=20pt] (8.5,0) -- (0,0) -- (5.5,-5.5);
\draw[blue][->] (7.5,-.15) -- (7.5,-1.3);
\draw[blue] (7.5,-1.3) node[below]{$P(w)$};
\draw[red!75][line width=12pt] (-5,-3) -- (-1,1) -- (3.5,-3.5);
\draw[red!75][->] (-1,1)--(2,1);
\draw[red!75] (2,1) node[right]{$P(v)$};
\draw[purple][rotate around={-45:(-2.5+4.5,0-1.5)}]
     [line width=1pt][shift={( 0.05,-0.35)}]
     (-2.5+4.5,0-1.5) ellipse (2.9cm and 0.7cm);
\draw[purple][->] (3.65,-3) -- (4.8,-3);
\draw[purple] (4.8,-3) node[right]{$\alpha A \mathop{\mapsto}\limits^{\M} a_1a_2\cdots a_{\tilde{m}}$};
\draw[cyan][line width=14pt] (-5.4,-5.4) --(0,0);
\draw[cyan][->] (-5.4,-5.4) -- (-4.4,-5.4);
\draw[cyan](-4.4,-5.4) node[right]{$\beta A$};
\draw[cyan](-4.4,-5.8) node[right]{Case (B)};
\draw[yellow][line width=8pt] (-5.2,-5.2) --(0,0) -- ( 3.2,-3.2);
\draw[yellow][line width=0.85pt][->] (1.7,-1.5) -- (4.6,-1.5);
\draw[yellow] (4.8,-1.5) node[right]{$\pmb{\beta A}$};
\draw[yellow] (4.8,-1.8) node[right]{Case (A)};
\draw[red] (3.5,-3.5) arc(-45:-225:2.45);
\draw[red] (3.45,-3.45) arc(-45:-225:2.375);
\draw[red] (2,-4.2) node[below]{$w_{m'}=w_1$};
\draw                      (0,0) node[left]{$w=\ $};
\draw    [shift={( 0, 0)}] (0,0) node{$w_1$};
\draw    [shift={( 1,-1)}] (0,0) node{$w_2$};
\draw    [shift={( 2,-2)}] (0,0) node{$w_j$};
\draw    [shift={( 3,-3)}] (0,0) node{$w_{j+1}$};
\draw    [shift={( 5,-5)}] (0,0) node{$w_m$};
\draw    [shift={( 2, 0)}] (0,0) node{$w_2'$};
\draw    [shift={( 8, 0)}] (0.15,0) node{$w_{m_0}'$};
\draw[->][shift={( 0, 0)}] (0.2,-0.2) -- (0.8,-0.8);
\draw    [shift={( 1,-1)}] [line width=1pt][dotted]
                           (0.2,-0.2) -- (0.8,-0.8);
\draw[->][shift={( 2,-2)}] (0.2,-0.2) -- (0.8,-0.8);
\draw    [shift={( 3,-3)}] [line width=1pt][dotted]
                           (0.2,-0.2) -- (0.8,-0.8);
\draw[->][shift={( 4,-4)}] (0.2,-0.2) -- (0.8,-0.8);
\draw    [shift={( 0, 0)}] (0.5,-0.5) node[right]{$a_1$};
\draw    [shift={( 2,-2)}] (0.5,-0.5) node[right]{$a_j$};
\draw    [shift={( 4,-4)}] (0.5,-0.5) node[right]{$a_{m-1}$};
\draw[->][shift={( 0, 0)}] (0.3, 0.0) -- (1.7, 0.0);
\draw[->][shift={( 2, 0)}] (0.3, 0.0) -- (1.7, 0.0);
\draw    [shift={( 4, 0)}] [line width=1pt][dotted]
                           (0.3, 0.0) -- (1.7, 0.0);
\draw[->][shift={( 6, 0)}] (0.3, 0.0) -- (1.7, 0.0);
\draw    [shift={( 0, 0)}] (1.0, 0.0) node[above]{$a_1'$};
\draw    [shift={( 2, 0)}] (1.0, 0.0) node[above]{$a_2'$};
\draw    [shift={( 6, 0)}] (1.0, 0.0) node[above]{$a_{m_0-1}'$};
\draw[->][shift={(-1, 1)}] (0.2,-0.2) -- (0.8,-0.8);
\draw    [shift={(-1, 1)}] (0.5,-0.5) node[right]{$\alpha$};
\draw    [shift={(-1, 1)}] (0,0) node{$v_1=v$};
\draw    [shift={(-2, 0)}] (0,0) node{$v_2$};
\draw    [shift={(-5,-3)}] (0,0) node{$v_n$};
\draw[->][shift={(-1, 1)}] (-0.2,-0.2) -- (-0.8,-0.8);
\draw[->][shift={(-2, 0)}] (-0.2,-0.2) -- (-0.8,-0.8);
\draw    [shift={(-3,-1)}] [line width=1pt][dotted]
                           (-0.2,-0.2) -- (-0.8,-0.8);
\draw[->][shift={(-4,-2)}] (-0.2,-0.2) -- (-0.8,-0.8);
\draw[->][shift={( 0, 0)}] (-0.8,-0.8) -- (-0.2,-0.2);
\draw[->][shift={(-1,-1)}] (-0.8,-0.8) -- (-0.2,-0.2);
\draw    [shift={(-2,-2)}] [line width=1pt][dotted]
                           (-0.8,-0.8) -- (-0.2,-0.2);
\draw[->][shift={(-3,-3)}] (-0.8,-0.8) -- (-0.2,-0.2);
\draw[->][shift={(-4,-4)}] (-0.8,-0.8) -- (-0.2,-0.2);
\draw[shift={(-1,-1)}] (0,0) node{$u_{l'-1}$};
\draw[shift={(-2,-2)}] (0,0) node{$u_{l'-2}$};
\draw[shift={(-3,-3)}] (0,0) node{$u_2$};
\draw[shift={(-4,-4)}] (0,0) node{$u_1$};
\draw[shift={(-5,-5)}] (0,0) node{$u$};
\draw[shift={( 0, 0)}] (-0.4,-0.4) node[left]{$c_{l'-1}$};
\draw[shift={(-1,-1)}] (-0.4,-0.4) node[left]{$c_{l'-2}$};
\draw[shift={(-3,-3)}] (-0.4,-0.4) node[left]{$c_1$};
\draw[shift={(-4,-4)}] (-0.4,-0.4) node[left]{$\beta$};
\end{tikzpicture}
\caption{If $f$ can be decomposed through $\beta A$ ($\not\cong \alpha A$)}
\label{fig:decomp-alphaA}
\end{figure}
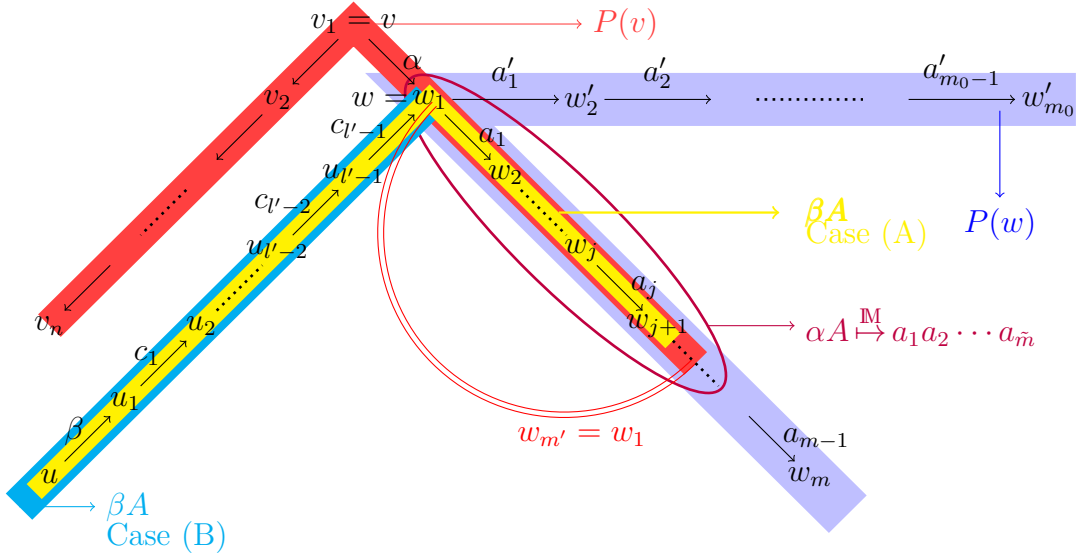
We obtain two cases as follows.
\begin{itemize}
  \item[(A)] $l'<l-1$, that is, the length of $c_{l'}c_{l'+1}\cdots c_{l-1}$ is great than or equal to $1$,
    see mark ``Case (A)'' in \Pic \ref{fig:decomp-alphaA}.

  \item[(B)] $l'=l-1$, that is, $\M(c_{l'}c_{l'+1}\cdots c_{l-1}) = \M(\e_{u_{l}})$ is isomorphic to the simple module $S(u_l)$, see mark ``Case (B)'' in \Pic \ref{fig:decomp-alphaA}.
\end{itemize}

In Case (A), $\alpha a_1\notin \I$ admits $c_{l'-1}a_1\in\I$ by the definition of string algebra,
i.e., $c_{l'-1}c_{l'}\in \I$, this is a contradiction since $c_1c_2\cdots c_{l-1}$ is a string.

In Case (B), we have $\image(f_2) \cong S(w_1)$
is a simple module which must be a submodule of $P(v)$
since $f_1 : \beta A \to P(v)$ is non-zero.
It follows that $w_{m'} = w_1$ by using Theorem \ref{thm:Kra1991} (1),
then, by using the definition of string algebra, we have
\begin{itemize}
  \item[(B.1)] $a_{m'-1}$ coincides with the arrow $\alpha$;
  \item[(B.2)] or $a_{m'-1}$ coincides with the arrow $c_{l'-1}$ ($=c_{l}$).
\end{itemize}
On the other hand, by $f_2\ne 0$, there is a factor substring of $\M^{-1}(\beta A)$
$=c_1\cdots c_{l'-1}$, say
\begin{center}
  $c_1\cdots c_{l''}$ $(0 \leqslant l'' \leqslant l'-1)$,
\end{center}
coincides with some image substring
\begin{center}
  $a_{m''}\cdots a_{m'-1}$ $(0 \leqslant m'' \leqslant m'-1)$
\end{center}
of $\M^{-1}(P(v))$, i.e.,
\begin{align}\label{formula:decomp-alphaA 1}
  c_1\cdots c_{l''} = a_{m''}\cdots a_{m'-1}\ (l'=m'-m'').
\end{align}
\begin{itemize}
\item[In](B.1):
We get $f_2$ is of the following form
\begin{align}\label{formula:decomp-alphaA 2}
  f_2: \alpha A \to \beta A,\ \alpha a\mapsto  \beta a_{m''} \cdots a_{m'-2} \cdot \alpha a,
\end{align}
by using (\ref{formula:decomp-alphaA 1}),
and get $f_1$ is of the following form
\begin{align}\label{formula:decomp-alphaA 3}
  f_1: \beta A \to \alpha A,\ \beta a \mapsto \alpha a_1\cdots a_{m''-1} a.
\end{align}
Consider the path $a_1$ as an element in $A$, we have
\begin{align}
    h_{\alpha}(a_1)
& = f_1(f_2(g(a_1))) = f_1(f_2(\alpha a_1)) \nonumber \\
& \mathop{=}\limits^{(\ref{formula:decomp-alphaA 2})}
  f_1(\beta a_{m''} \cdots a_{m'-2}\alpha a_1) \nonumber  \\
& \mathop{=}\limits^{(\ref{formula:decomp-alphaA 3})}
  \alpha a_1\cdots a_{m''-1}a_{m''} \cdots a_{m'-2}\alpha a_1 \nonumber \\
& = \alpha a_1\cdots a_{m'-2} \alpha a_1 \nonumber \\
& \ne \alpha a_1 = h_{\alpha}(a_1), \nonumber
\end{align}
a contradiction.

\item[In](B.2):
  $a_{m'-1} = c_{l'-1}$ admits $a_{m'-2}c_{l'-1} \in \I$ by using the definition of string algebra.
  Then $a_{m'-2}c_{l'-1} = a_{m'-2}a_{m'-1}$, as an element in $A$, is zero.
  It contradicts with $a_1\cdots a_{m'-2}a_{m'-1}$ is a string.
\end{itemize}

The contradictions given by Cases (A) and (B) show that this proposition holds.
\end{proof}

\begin{lemma} \label{lemm:decomp-two proj}
Let $A = \kk\Q/\I$ be a string algebra and $\alpha$ be an arrow on $\Q$.
Any homomorphism $h_{\alpha}: P(v) \to P(w)$ induced by the arrow $\alpha: w\to v$
can not be decomposed through arbitrary $\beta A$ $(\not\cong \alpha A)$.
\end{lemma}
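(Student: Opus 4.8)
The plan is to deduce this from Lemma~\ref{lemm:decomp-alphaA} using the canonical factorization of $h_{\alpha}$ supplied by Lemma~\ref{lemm:decomp}. Writing $\alpha\colon w\to v$, that factorization reads $h_{\alpha}=f\circ g$ with $g\colon P(v)\twoheadrightarrow\alpha A$ the surjection $\e_{v}a\mapsto\alpha a$ and $f\colon\alpha A\hookrightarrow P(w)$ the inclusion of $\alpha A$ as a submodule of $P(w)=\e_{\s(\alpha)}A$; in particular $h_{\alpha}\neq 0$ and $\image h_{\alpha}=\alpha A$. Suppose then that $h_{\alpha}=\phi\psi$ with $\psi\colon P(v)\to\beta A$ and $\phi\colon\beta A\to P(w)$ for some $\beta\in\Q_{1}$; since $h_{\alpha}\neq 0$, both $\phi$ and $\psi$ are nonzero. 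We read ``decomposed through $\beta A$'' as a genuine factorization, i.e.\ $\phi$ and $\psi$ are non-isomorphisms, which is automatic once $\beta A\not\cong P(v),P(w)$ --- in particular whenever $\beta A$ is non-projective, e.g.\ when $\beta$ is a left forbidden arrow, which is the setting in which the lemma will be used.

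First I would determine $\image\phi$. By Lemma~\ref{lemm:alphaA}, $\beta A$ is the string module of a directed path, hence uniserial, so its quotient $\image\phi\subseteq P(w)$ is uniserial; moreover $\alpha A=\image h_{\alpha}=\image(\phi\psi)\subseteq\image\phi$. Using the explicit string descriptions of $P(w)$ and $\alpha A$ from Lemma~\ref{lemm:alphaA} --- recall $\alpha A$ is the directed sub-branch of the string of $P(w)$ issuing from $\t(\alpha)$ --- one verifies that a uniserial submodule of $P(w)$ containing $\alpha A$ is either $\alpha A$ itself or all of $P(w)$ (in the second case the generator of the submodule must be supported at the peak $\s(\alpha)$ of the string of $P(w)$, which is possible only when $\s(\alpha)$ has a single outgoing arrow, i.e.\ when $P(w)$ is already uniserial). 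If $\image\phi=P(w)$, then $\phi$ is a split epimorphism onto the projective $P(w)$, so $P(w)$ is a direct summand of the indecomposable module $\beta A$ (Lemma~\ref{lemm:alphaA}); since $P(w)\neq 0$, $\phi$ is an isomorphism and $\beta A\cong P(w)$, contrary to our standing hypothesis. Hence $\image\phi=\alpha A$.

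To conclude, co-restrict $\phi$ to its image, writing $\phi=f\,\overline{\phi}$ with $\overline{\phi}\colon\beta A\twoheadrightarrow\alpha A$. Then $f g=h_{\alpha}=\phi\psi=f\,\overline{\phi}\,\psi$, and since $f$ is a monomorphism we may cancel it to obtain $g=\overline{\phi}\,\psi$. Thus $g\colon P(v)\to\alpha A$ is decomposed through $\beta A$, via $P(v)\xrightarrow{\psi}\beta A\xrightarrow{\overline{\phi}}\alpha A$; by Lemma~\ref{lemm:decomp-alphaA} this forces $\beta A\cong\alpha A$, which is exactly what we wanted.

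The step I expect to be the main obstacle is the classification of uniserial submodules of $P(w)$ containing $\alpha A$, together with the small amount of bookkeeping needed to rule out the trivial factorization of $h_{\alpha}$ through a projective $P(w)\cong\beta A$; both are routine but slightly delicate computations with string modules, handled by unwinding Lemma~\ref{lemm:alphaA} and Theorem~\ref{thm:Kra1991}. Should one wish to avoid the reduction, an alternative is to mimic the string-combinatorial argument in the proof of Lemma~\ref{lemm:decomp-alphaA} directly: starting from a supposed factorization $h_{\alpha}=\phi\psi$ through $\beta A$, read off the factor and image substrings produced by Theorem~\ref{thm:Kra1991} and, in each possible configuration, exhibit a path that is forced both to lie in and to avoid $\I$. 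This reproves the statement from scratch, but it is appreciably longer, so the reduction above seems preferable.
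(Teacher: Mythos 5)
Your proof is correct, but it takes a genuinely different route from the paper's. The paper argues directly from Theorem \ref{thm:Kra1991}: it asserts (with ``one can check'') that any factorization of $h_{p}$ through $\beta A$ forces $\beta$ to be an arrow lying on the path $p$ describing the homomorphism, so for $p=\alpha$ one concludes $\beta=\alpha$. You instead exploit the canonical factorization $h_{\alpha}=f\circ g$ from Lemma \ref{lemm:decomp}, note that $\image\,\phi$ is a uniserial submodule of $P(w)$ containing $\alpha A=\image\,h_{\alpha}$, classify such submodules as $\alpha A$ or $P(w)$ (modularity plus Nakayama does verify your dichotomy), dispose of the second case as a split epimorphism onto a projective, and in the first case cancel the monomorphism $f$ to exhibit $g$ as factoring through $\beta A$, which contradicts Lemma \ref{lemm:decomp-alphaA}. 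Your route buys two things: it makes explicit the degenerate factorizations through $\beta A\cong P(v)$ or $\beta A\cong P(w)$ that the statement must implicitly exclude to be literally true (the paper never addresses these, though they are harmless in the intended application where $\beta$ is left forbidden and hence $\beta A$ is non-projective), and it replaces the paper's unverified combinatorial claim by a reduction to an already-established lemma. The one caveat is that you rely on the $g$-half of Lemma \ref{lemm:decomp-alphaA}, whose written proof in the paper treats only $f$ in detail; since the stated lemma covers $g$, the citation is legitimate, but a fully self-contained argument would either supply that half or follow the paper's direct substring analysis.
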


\begin{proof}
It is well-known that each base $h_p$ element of $\Hom_A(P(v),P(w))$ is described by the path $p$ from $w$ to $v$ on $(\Q,\I)$, that is, $h_p: e_v a\mapsto p\cdot e_va = pa$.
If $h_p$ can be decomposed through $\beta A$, then, by Theorem \ref{thm:Kra1991},
one can check that $\beta$ is an arrow such that:
\begin{itemize}
  \item[(1)] $\t(\beta) = \s(\M^{-1}(P(v)))$,
  \item[(2)] $p$ is a factor substring of $M^{-1}(P(w))$,
  \item[(3)] $\beta$ is an arrow on $p$.
\end{itemize}
If $p=\alpha$ is an arrow, then $\beta$ must be coincided with $\alpha$,
it contradicts with $\alpha A\not\cong \beta A$ as required.
\end{proof}

\begin{lemma} \label{lemm:decomp-two alphaA}
Let $A = \kk\Q/\I$ be an SAG-algebra, $\alpha$ be a left forbidden arrow,
and $\beta$ be an arbitrary arrow. If $\Hom_A(\alpha A, \beta A) \ne 0$,
then it can be decomposed through some indecomposable projective module.
\end{lemma}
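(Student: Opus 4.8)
The plan is to pass to the combinatorial description of $\Hom$ provided by Theorem~\ref{thm:Kra1991} and then to realise the given morphism explicitly as a composite of honest module maps through an indecomposable projective. First I would record the shapes of the two modules. By Lemma~\ref{lemm:alphaA} and its proof, together with condition (AG2) (all defining relations have length two), both $\alpha A$ and $\beta A$ are uniserial string modules whose strings are directed paths; write $\M^{-1}(\alpha A)\colon w=y_1\rarrow{e_1}y_2\rarrow{}\cdots\rarrow{}y_r$ with $w=\t(\alpha)$ and, when $r\ge 2$, $e_1$ the unique arrow with $\s(e_1)=w$ and $\alpha e_1\notin\I$, and $\M^{-1}(\beta A)\colon z_1\rarrow{f_1}z_2\rarrow{}\cdots\rarrow{}z_s$ with $z_1=\t(\beta)$. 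Since $\alpha$ is a left forbidden arrow there is an arrow $\gamma$ with $\s(\gamma)=w$ and $\alpha\gamma\in\I$; this $\gamma$ is exactly the branch of $P(w)$ killed by the projective cover $g\colon P(w)\twoheadrightarrow\alpha A$ of Lemma~\ref{lemm:decomp}, so in particular $\gamma\neq e_1$.

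Next I would apply Theorem~\ref{thm:Kra1991}(1): a nonzero $h\in\Hom_A(\alpha A,\beta A)$ corresponds to a string $\rho$ that is simultaneously a factor substring of $\M^{-1}(\alpha A)$ and an image substring of $\M^{-1}(\beta A)$. As both strings are directed paths, $\rho$ must be an initial segment $y_1\rarrow{e_1}\cdots\rarrow{}y_t$ of the former and a terminal segment $z_k\rarrow{f_k}\cdots\rarrow{}z_s$ of the latter, so that $y_1=z_k$, $e_i=f_{k+i-1}$, $y_t=z_s$ and $t-1=s-k$; moreover $h$ is the $A$-linear map determined by $h(\alpha)=\beta f_1\cdots f_{k-1}$. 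In the generic situation where $k\ge 2$ and $\rho$ is non-trivial, I would observe that $f_{k-1}f_k=f_{k-1}e_1\notin\I$ while also $\alpha e_1\notin\I$, so condition (S2)$_\Left$ applied to $e_1$ forces $\alpha=f_{k-1}$; hence $\s(\alpha)=z_{k-1}$ and $p:=f_1\cdots f_{k-2}$ is a path from $\t(\beta)$ to $\s(\alpha)$ (the trivial path when $k=2$).

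I would then exhibit the factorisation directly. Let $f\colon\alpha A\hookrightarrow P(\s(\alpha))$ be the inclusion of Lemma~\ref{lemm:decomp}, let $h_p\colon P(\s(\alpha))\to P(\t(\beta))$ be the homomorphism induced by the path $p=f_1\cdots f_{k-2}$ (left multiplication by $p$), and let $g_\beta\colon P(\t(\beta))\twoheadrightarrow\beta A$ be the canonical surjection $e_{\t(\beta)}a\mapsto\beta a$. Each of these is an $A$-module homomorphism, so $g_\beta h_p f$ is one too, and on the generator it sends $\alpha\mapsto\alpha\mapsto(f_1\cdots f_{k-2})\alpha=f_1\cdots f_{k-1}\mapsto\beta f_1\cdots f_{k-1}=h(\alpha)$ (using $\alpha=f_{k-1}$). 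Since $\alpha$ generates $\alpha A$ and the two maps are $A$-linear, $g_\beta h_p f=h$; thus $h$ factors through the indecomposable projective $P(\t(\beta))$.

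It remains to handle the degenerate configurations. If $\rho$ is trivial then $t=1$ and $z_s=w$, so $\image(h)=S(w)$ is the socle of $\beta A$; here (when $s\ge 2$) one factors $h$ through $P(z_{s-1})$ by the same device, via the map $\alpha A\to P(z_{s-1})$, $\alpha\mapsto f_{s-1}$, whose well-definedness uses that $\M^{-1}(\beta A)$ terminates at $w$, so $f_{s-1}c\in\I$ for every arrow $c$ with $\s(c)=w$, together with $\alpha\gamma\in\I$. The case $k=1$ with $\rho$ non-trivial forces $\beta=\alpha$ by the (S2)$_\Left$ comparison above, so $\beta A\cong\alpha A$ and $h$ is an isomorphism; this situation is excluded by the conventions attached to a left forbidden arrow index in Definition~\ref{def:R}, just as the neighbouring lemmas (e.g.\ Lemma~\ref{lemm:decomp-two proj}) explicitly assume $\beta A\not\cong\alpha A$. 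The one genuinely delicate point is the string bookkeeping --- checking that the candidate composite equals $h$, equivalently that the annihilator containments making the auxiliary maps well defined hold --- and it is precisely here that the SAG hypothesis (length-two relations) and the left-forbidden hypothesis enter, through (S2)$_\Left$ and (S2)$_\Right$.
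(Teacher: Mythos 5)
Your strategy is essentially the paper's: use Theorem~\ref{thm:Kra1991} to identify a nonzero $h\in\Hom_A(\alpha A,\beta A)$ with an overlap $\rho$ between a factor substring (initial segment) of $\M^{-1}(\alpha A)$ and an image substring (terminal segment) of $\M^{-1}(\beta A)$, and then realise $h$ explicitly as a composite through $P(\t(\beta))$ (the paper's $P(u)$). In the main case ($k\ge 2$, $\rho$ nontrivial) and in the case $\rho$ trivial with $s\ge 2$ your bookkeeping is correct — indeed cleaner than the paper's two ``Case 1''s and its hand-waved ``Subcase 2.2'' — and your use of (S2)$_\Left$ to force $\alpha=f_{k-1}$ is exactly the right mechanism. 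The exclusion of the isomorphism case $k=1$, $t\ge 2$ is also a fair reading of how the lemma is used.

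There is, however, one configuration you leave untreated, namely $k=s=t=1$: here $\beta A$ is simple, $\t(\beta)=\t(\alpha)=w$, $\beta\ne\alpha$, and $\alpha A$ is not simple. Your ``$\rho$ trivial'' argument explicitly assumes $s\ge 2$, so this case falls through the cracks — and in it the statement is actually false, so the gap cannot be patched. Take the quiver with vertices $1,\dots,5$ and arrows $\alpha:1\to 3$, $\beta:2\to 3$, $e:3\to 4$, $\gamma:3\to 5$, with $\I=\langle \alpha\gamma,\ \beta e,\ \beta\gamma\rangle$; this is an SAG-pair (it is not gentle, since $\beta e$ and $\beta\gamma$ both lie in $\I$), and $\alpha$ is a left forbidden arrow. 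Then $\alpha A$ has basis $\{\alpha,\alpha e\}$, while $\beta A=\kk\beta\cong S(3)$, and the map $h:\alpha A\to\beta A$, $\alpha a\mapsto\beta a$, is a well-defined nonzero homomorphism. Any factorisation through a projective would have to pass through $P(3)$, because $\Hom_A(P(u),\beta A)\cong\beta A\,e_u$ vanishes for $u\ne 3$; but $\Hom_A(\alpha A,P(3))=0$, since a homomorphism $\phi$ must send $\alpha$ to $c\,e_3$ and then $0=\phi(\alpha\gamma)=c\,\gamma$ forces $c=0$. So $h$ factors through no projective, contradicting the lemma. (The obstruction is precisely that $\alpha$ is left forbidden while $\beta A$ is simple with the same target.) Note that the paper's own proof disposes of this situation with ``Subcase 2.2 \dots\ similar to the proof of Subcase 2.1,'' so the defect is inherited from the source rather than introduced by you; but as written, both your argument and the paper's require an extra hypothesis ruling out this configuration (it cannot occur for gentle algebras, nor for arrows lying on perfect forbidden cycles, which is why Theorem~\ref{thm:main 4} is unaffected).
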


\begin{proof}
Assume $f$ is a non-zero homomorphism which can be decomposed through $\gamma A$,
here, $\gamma A \not\cong \alpha A$ and $\gamma A \not\cong \beta A$.
Assume $\alpha$ is an arrow on the path
\begin{center}
  $p=$ \ $\xymatrix{
    \cdots \ar[r] & \bullet \ar[r]^{a_1} & \bullet \ar[r]^{a_2} & \cdots \ar[r]^{a_n}
  & v \ar[r]^{\alpha} & w \ar@{~>}[r]^{q_{\alpha}} & \bullet
  }$
\end{center}
such that $a_1a_2, \ldots a_{n-1}a_n, a_n\alpha\in \I$ and $\M(q_{\alpha}) \cong \alpha A$.
By Theorem \ref{thm:Kra1991} and $f \ne 0$, we obtain that
$q_{\alpha}$ has a factor substring which coincides with an image substring of $q_{\beta}:=\M^{-1}(\beta A)$
(then $\t(\beta) = \s(q_{\beta})$ holds).
Thus, $\t(q_{\beta})$ is a vertex on $\alpha q_{\alpha}$,
and the positional relationship of $q_{\alpha}$ and $q_{\beta}$ is one of the following forms:
\begin{itemize}
\item[C]\hspace{-6pt}ase 1.
\begin{center}
    $\xymatrix{
   & & & \circ \ar[r]^{\beta} & u \ar@{~>}[rd]^{\tilde{q}_{\beta}} & & \\
    \cdots \ar[r] & \bullet \ar[r]_{a_1} \ar@{.}@/^1pc/[rr]
   & \bullet \ar[r]_{a_2} \ar@{.}@/^1pc/[rr] & \cdots \ar[r]_{a_n} \ar@{.}@/^1pc/[rr]
   & v \ar[r]_{\alpha} \ar@{.}@/^0.5pc/[rrd] & w \ar@{~>}[r]^{q_{\alpha}} \ar[rd]^{\gamma} & \bullet; \\
   & & & & & & \ddots &
  }$
\end{center}
where $q_{\beta}$ is of the form $\tilde{q}_{\beta}q'_{\alpha}$,
and $q'_{\alpha}$ is a factor image of $q_{\alpha}$.

\item[C]\hspace{-6pt}ase 1.
\begin{center}
$\xymatrix{
   & & \circ \ar[r]^{\beta} & u \ar@{~>}[rd]^{\tilde{q}_{\beta}} & & & \\
    \cdots \ar[r] & \bullet \ar[r]^{a_1} \ar@{.}@/^1pc/[rr]
   & \bullet \ar[r]^{a_2} \ar@{.}@/^1pc/[rr] & \cdots \ar[r]^{a_n} \ar@{.}@/^1pc/[rr]
   & v \ar[r]^{\alpha}  \ar[rd]^{\gamma} & w \ar@{~>}[r]^{q_{\alpha}} & \bullet, \\
   & & & & & \ddots & &
}$
\end{center}
where $q_{\beta}$ is of the form $\tilde{q}_{\beta}\alpha q'_{\alpha}$,
and $q'_{\alpha}$ is a factor image of $q_{\alpha}$.
\end{itemize}

In Case 1, we assume
\begin{align} \label{formula in lemm:decomp-two alphaA}
  q_{\alpha} =\ \xymatrix{ w \ar[r]^{q_{\alpha,1}} & \bullet \ar[r]^{q_{\alpha,2}} & \cdots \ar[r]^{q_{\alpha,l}}& \bullet};
  \text{ and }
  \tilde{q}_{\beta} =\ \xymatrix{ u \ar[r]^{q_{\beta,1}} & \circ \ar[r]^{q_{\beta,2}} & \cdots \ar[r]^{q_{\beta,\ell}} & w };
  \nonumber
\end{align}
then $q_{\beta,\ell}q_{\alpha,1} \in \I$  since $A$ is a string algebra.
We know that $q_{\alpha}$ has a factor substring, say $r$, coinciding with an image substring of $q_{\beta}$,
there are two subcases as follows.
\begin{itemize}
\item[S]\hspace{-6pt}ubcase 2.1. $l=0$.
Then, $\tilde{q}_{\beta} = q_{\beta}$, and $r = q_{\alpha}' = q_{\alpha} = \e_w$ is both a factor substring of $q_{\alpha}$ and an image substring of $q_{\beta}$ respect to $q_{\beta,\ell}$.
It follows that $\t(q_{\beta}) = w$, and $\image(f) \cong S(w)$ is simple.
On the other hand, $A$ is an SAG-algebra, then all relations in $\I$ are paths of length two,
and so, $\beta q_{\beta}\gamma = 0$ admits that $q_{\beta,\ell}\gamma \in \I$.
Thus, $S(w)$ is a direct summand of the socle of $P(S(w)) = P(\s(q_{\beta, 1})) = P(u)$.
That is, for any $a\in A$, we obtain a decomposition
\[ \xymatrix@C=1.5cm{ \alpha A \ar@/^1pc/[rr]^{f} \ar[r]_{f_1} & P(u) \ar[r]_{f_2} & \beta A } \]
of $f$ such that $f(\alpha a) = \beta q_{\beta,1}\cdots q_{\beta,\ell-1} \cdot f_1(\alpha a)$,
where $f_1$ sends $\alpha a$ to an element in $P(u)$ which is of the form $\e_{u} a'$,
and, for any $x\in A$, $f_2$ sends each $\e_{u} x$ to $\beta q_{\beta,1}\cdots q_{\beta,l-1} \cdot \e_{u} x$.

\item[S]\hspace{-6pt}ubcase 2.2. $l\ge 1$.
We can show that $f$ can be decomposed by $P(u)$ by the method similar to the proof of Subcase 2.1.
\end{itemize}

In Case 2, we assume
\begin{align}
  q_{\alpha} =\ \xymatrix{ w \ar[r]^{q_{\alpha,1}} & \bullet \ar[r]^{q_{\alpha,2}} & \cdots \ar[r]^{q_{\alpha,l}}& \bullet};
  \text{ and }
  \tilde{q}_{\beta} =\
  \xymatrix{ u \ar[r]^{q_{\beta,1}}
  & \circ \ar[r]^{q_{\beta,2}}
  & \cdots \ar[r]^{q_{\beta,\ell}} & v };
  \nonumber
\end{align}
then $q_{\beta,\ell}\alpha\notin\I$, which admits $q_{\beta,\ell}\gamma\in\I$ by using $A$ to be a string algebra.
Assume $q'_{\alpha} = q_{\alpha,1}\cdots q_{\alpha,l'}$, $1\leqslant l\leqslant l'$,
then, for any $a\in A$, the homomorphism $f$ described by $q'_{\alpha}$ sends each element $\alpha a$ to $\beta\tilde{q}_{\beta}\alpha a$ ($\in \beta A$).
Notice that
\begin{center}
  $\tilde{q}_{\beta}\alpha a = \e_{\s(\tilde{q}_{\beta,1})}\tilde{q}_{\beta}\alpha a
  = \e_u\tilde{q}_{\beta}\alpha a \in \e_uA = P(u)$,
\end{center}
now one can check that $f$ can be decomposed through $P(u)$.
\end{proof}

\section{$\R$-endomorphism algebras}

For a string algebra $A$, let $\R$ be a subset of $\Q_1$ whose all elements are left forbidden arrows in this section.

\begin{definition} \label{def:R} \rm
An {\defines $\R$-summed module} is the direct sum of $A$ and all arrowed modules $\alpha A$ ($\alpha \in \R$), that is,
\[ M_{\R} := A \oplus \bigoplus_{\alpha\in\R} \alpha A, \]
and its endomorphism algebra
\[ A_{\R} := \End_A(M_{\R}) \]
is called an {\defines $\R$-endomorphism algebra}.
The set $\R$ is called an {\defines left forbidden arrow index} (of $(\Q,\I)$).
In particular, there are two remarks as follows.
\begin{itemize}
  \item[(1)] In the case of $\R = \varnothing$, the $\R$-summed module is $0$.
  \item[(2)] Every arrowed module $\alpha A$ is an $\R$-summed module with $\R=\{\alpha\}$.
\end{itemize}
\end{definition}

In this section, we provide a method to compute the bound quiver of $A_{\R}$ in the case of $A$ to be an SAG-algebra.

\subsection{$\R$-bound quiver} \label{subsect:R bound quiver}

Let $(\Q,\I)$ be a bound quiver of a string algebra.
We define its $\R$-bound quiver is $(\R(\Q),\R(\I))$,
where $\R(\Q) = (\R(\Q)_0, \R(\Q)_1, {\R}(\s), {\R}(\t))$ is given by the following Steps 1--4,
and $\R(\I)$ is given by the following Steps 5--6.
\begin{itemize}
  \item[S]\hspace{-6pt}tep 1  $\R(\Q)_0 := \Q_0 \cup \Q_0^{\spl}$, where $\Q_0^{\spl}$ is a finite set such that
    the bijection
    \begin{center}
      $\mathfrak{v}: \{\alpha A \mid \alpha\in \R\} \to \Q_0^{\spl}$
    \end{center}
    exist.

  \item[S]\hspace{-6pt}tep 2 $\R(\Q)_1 := (\Q_1\backslash\R)\cup\R^{\spl}$,
    where $\R^{\spl} := \{ \alpha_{\Left}, \alpha_{\Right} \mid \alpha \in \R \}$.

  \item[S]\hspace{-6pt}tep 3 ${\R}(\s): \R(\Q)_1 \to \R(\Q)_0$ sends any arrow $a \in \Q_1\backslash\R$ to its source $\s(a)$,
    sends any arrow $\alpha_{\Left} \in \R^{\spl}$ to the source $\s(\alpha)$ of $\alpha$,
    and sends any arrow $\alpha_{\Right} \in \R^{\spl}$ to the vertex $\mathfrak{v}(\alpha A)$.

  \item[S]\hspace{-6pt}tep 4 ${\R}(\t): \R(\Q)_1 \to \R(\Q)_0$ sends any arrow $a \in \Q_1\backslash\R$ to its sink $\t(a)$,
    sends any arrow $\alpha_{\Left} \in \R^{\spl}$ to the vertex $\mathfrak{v}(\alpha A)$,
    and sends any arrow $\alpha_{\Right} \in \R^{\spl}$ to the sink $\t(\alpha)$ of $\alpha$.

  \item[S]\hspace{-6pt}tep 5 For any arrow $a\in \Q_1$, define
  \begin{align*}
    & a^{\spl} =
    {\begin{cases}
      a, & \text{if } a\in \Q_1\backslash\R; \\
      a_{\Left}a_{\Right}, & \text{if } a\in \R,
    \end{cases}} \\
    & a^{\spl}_{\mathrm{L}} =
    {\begin{cases}
      a, & \text{if } a\in \Q_1\backslash\R; \\
      a_{\Left}, & \text{if } a\in \R,
    \end{cases}} \\
    \text{ and }
    & a^{\spl}_{\mathrm{R}} =
    {\begin{cases}
      a, & \text{if } a\in \Q_1\backslash\R; \\
      a_{\Right}, & \text{if } a\in \R.
    \end{cases}}
  \end{align*}
  For any path $p=a_1a_2\cdots a_n$ on $(\Q,\I)$, we define
  \begin{center}
    $p^{\spl} = (a_1)^{\spl}_{\Right}a_2^{\spl}\cdots a_{n-1}^{\spl} (a_n)^{\spl}_{\Left}$.
  \end{center}

  \item[S]\hspace{-6pt}tep 6
    $\R(\I) := \langle p^{\spl} \mid p\in \I \rangle$ which is naturally induced by $\I$ and Step 5.
\end{itemize}

\begin{remark}
\label{rmk:R(A) is string} \rm \
\begin{itemize}
  \item The finite-dimensional algebra given by $(\R(\Q), \R(\I))$ is written as $\R(A)$.
    If $A$ is a string algebra (resp., an SAG-algebra), then so is $\R(A)$.
  \item It is clear that $\alpha_{\Left}\alpha_{\Right} \notin \I(A)$.
  \item If $\R=\varnothing$, then it is trivial that $\R(A)$ and $A$ coincide.
\end{itemize}
\end{remark}

\begin{example} \label{examp:split} \rm
Let $A = \kk\Q/\I$ be the string algebra given by Example \ref{examp:string}.
Take $\R = \{a, d, a'\}$, then the $\R$-bound quiver $(\R(\Q),\R(\I))$ is shown in \Pic \ref{fig in examp:split}, where
\begin{align*}
  \R(\Q)_0
& = \{1,2,3,4,5,6,aA,dA,a'A\} \\
& = \Q_0\cup\{v_a=\mathfrak{v}(aA), v_{d}=\mathfrak{v}(dA), v_{a'}=\mathfrak{v}(a'A)\}; \\
  \R(\Q)_1
& = (\Q_1\backslash\R) \cup \{ a_{\Left}, d_{\Left}, a'_{\Left},
    a_{\Right}, d_{\Right}, a'_{\Right} \};
\end{align*}
\begin{figure}[htbp]
\centering
\begin{tikzpicture}[scale=2]
\draw[   red][dotted][line width = 0.8pt][rotate around={  0:(0,0)}]
  ( 0.51, 0.78) to[out=-60,in=60] ( 0.51,-0.78);
\draw[yellow][dotted][line width = 0.8pt][rotate around={120:(0,0)}]
  ( 0.51, 0.78) to[out=-60,in=60] ( 0.51,-0.78);
\draw[  blue][dotted][line width = 0.8pt][rotate around={240:(0,0)}]
  ( 0.51, 0.78) to[out=-60,in=60] ( 0.51,-0.78);
\draw[   red][dotted][line width = 0.8pt][rotate around={  0:(0,0)}]
  ( 1.50,-0.15) -- ( 1.00, 1.00);
\draw[yellow][dotted][line width = 0.8pt][rotate around={120:(0,0)}]
  ( 1.50,-0.15) -- ( 1.00, 1.00);
\draw[  blue][dotted][line width = 0.8pt][rotate around={240:(0,0)}]
  ( 1.50,-0.15) -- ( 1.00, 1.00);
\draw[   red][dashed][line width = 1.2pt][rotate around={  0:(0,0)}]
  ( 2.15,-0.15) arc (180:225:0.55);
\draw[yellow][dashed][line width = 1.2pt][rotate around={120:(0,0)}]
  ( 2.15,-0.15) arc (180:225:0.55);
\draw[  blue][dashed][line width = 1.2pt][rotate around={240:(0,0)}]
  ( 2.15,-0.15) arc (180:225:0.55);
\draw[   red][dotted][line width = 1.6pt][rotate around={  0:(0,0)}]
  (1.3,2.25) arc(60:115:2.6) -- (-0.25, 0.83) arc(120:215:1.1);
\draw[yellow][dotted][line width = 2.6pt][rotate around={120:(0,0)}]
  (2.6,0) arc(0:115:2.6) -- (-0.25, 0.83) arc(120:215:1.1);
\draw[blue!55][dotted][line width = 3.6pt][rotate around={240:(0,0)}]
  (2.6,0) arc(0:115:2.6) -- (-0.25, 0.83) arc(120:170:1.1);
\draw[   red][dotted][line width = 0.8pt][rotate around={  0:(0,0)}]
  (2.70,-0.15-0.60) arc( -90:  90:0.60);
\draw[   red][dashed][line width = 0.8pt][rotate around={  0:(0,0)}]
  (2.70,-0.15+0.75) arc(  90:-136:0.75);
\draw[yellow][dotted][line width = 0.8pt][rotate around={120:(0,0)}]
  (2.70,-0.15-0.60) arc( -90:  90:0.60);
\draw[yellow][dashed][line width = 0.8pt][rotate around={120:(0,0)}]
  (2.70,-0.15+0.75) arc(  90:-136:0.75);
\draw[  blue][dotted][line width = 0.8pt][rotate around={240:(0,0)}]
  (2.70,-0.15-0.60) arc( -90:  90:0.60);
\draw[  blue][dashed][line width = 0.8pt][rotate around={240:(0,0)}]
  (2.70,-0.15+0.75) arc(  90:-136:0.75);
%
%
\draw[line width = 1.1pt][->] [rotate around={  0:(0,0)}][violet]
  (1,0) arc(0:45:1);
\draw[line width = 1.1pt][->] [rotate around={ 60:(0,0)}][violet]
  (1,0) arc(0:45:1);
\draw[line width = 0.8pt][->] [rotate around={120:(0,0)}]
  (1,0) arc(0:105:1);
\draw[line width = 0.8pt][->] [rotate around={240:(0,0)}]
  (1,0) arc(0:105:1);
\draw[line width = 1.1pt][->] [rotate around={  0:(0,0)}][violet]
  (2.5,-0.15) -- (2.0,-0.15);
\draw[line width = 1.1pt][->] [rotate around={  0:(0,0)}][violet]
  (1.7,-0.15) -- (1.2,-0.15);
\draw[line width = 0.8pt][->] [rotate around={120:(0,0)}]
  (2.5,-0.15) -- (1.2,-0.15);
\draw[line width = 0.8pt][->] [rotate around={240:(0,0)}]
  (2.5,-0.15) -- (1.2,-0.15);
\draw[line width = 0.8pt][->] [rotate around={  0:(0,0)}]
  (1.1,-0.05) to[out=80,in=0] (-1.1,2.3);
\draw[line width = 0.8pt][->] [rotate around={120:(0,0)}]
  (1.1,-0.05) to[out=80,in=0] (-1.1,2.3);
\draw[line width = 0.8pt][->] [rotate around={240:(0,0)}]
  (1.1,-0.05) to[out=80,in=0] (-1.1,2.3);
\draw[line width = 1.1pt][->] [rotate around={  0:(0,0)}][violet]
  (2.7,0) arc(0:50:2.7);
\draw[line width = 1.1pt][->] [rotate around={ 60:(0,0)}][violet]
  (2.7,0) arc(0:55:2.7);
\draw[line width = 0.8pt][->] [rotate around={120:(0,0)}]
  (2.7,0) arc(0:115:2.7);
\draw[line width = 0.8pt][->] [rotate around={240:(0,0)}]
  (2.7,0) arc(0:115:2.7);
{\tiny
\draw[rotate around={ 60:(0,0)}] ( 1.00,-0.15) node{$v_{a}$};
\draw[rotate around={ 57:(0,0)}] ( 2.70,-0.15) node{$v_{a'}$};
\draw[rotate around={  0:(0,0)}] ( 1.85,-0.15) node{$v_{d}$};
\draw[rotate around={  0:(0,0)}] ( 1.00,-0.15) node{$1$};
\draw[rotate around={120:(0,0)}] ( 1.00,-0.15) node{$2$};
\draw[rotate around={240:(0,0)}] ( 1.00,-0.15) node{$3$};
\draw[rotate around={  0:(0,0)}] ( 2.70,-0.15) node{$4$};
\draw[rotate around={120:(0,0)}] ( 2.70,-0.15) node{$5$};
\draw[rotate around={240:(0,0)}] ( 2.70,-0.15) node{$6$};
%
\draw[rotate around={-35:(0,0)}] ( 0.45, 0.78) node{$a_{\Left}$};
\draw[rotate around={ 20:(0,0)}] ( 0.45, 0.78) node{$a_{\Right}$};
\draw[rotate around={120:(0,0)}] ( 0.45, 0.78) node{$b$};
\draw[rotate around={240:(0,0)}] ( 0.45, 0.78) node{$c$};
\draw[rotate around={  0:(0,0)}] ( 2.25, 0.05) node{$d_{\Left}$};
\draw[rotate around={  0:(0,0)}] ( 1.45, 0.05) node{$d_{\Right}$};
\draw[rotate around={120:(0,0)}] ( 1.85, 0.05) node{$e$};
\draw[rotate around={240:(0,0)}] ( 1.85, 0.05) node{$f$};
\draw[rotate around={  0:(0,0)}] ( 0.85, 1.47) node{$d'$};
\draw[rotate around={120:(0,0)}] ( 0.85, 1.47) node{$e'$};
\draw[rotate around={240:(0,0)}] ( 0.85, 1.47) node{$f'$};
\draw[rotate around={-30:(0,0)}] ( 1.45, 2.51) node{$a'_{\Left}$};
\draw[rotate around={ 30:(0,0)}] ( 1.45, 2.51) node{$a'_{\Right}$};
\draw[rotate around={120:(0,0)}] ( 1.45, 2.51) node{$b'$};
\draw[rotate around={240:(0,0)}] ( 1.45, 2.51) node{$c'$};
}
\end{tikzpicture}
\caption{The bound quiver $(\R(\Q),\R(\I))$ of $\R(A)$}
\label{fig in examp:split}
\end{figure}
and, since
\begin{center}
  $\I = \langle ab, bc, ca, dd', ee', ff',$
  $a'b', b'c', c'a', $

  $e'f, e'c', f'd, f'a', d'e, d'b',$
  $a'eb, b'fc, c'da\rangle$,
\end{center}
we have
\begin{center}
  $\R(\I) = \langle a_{\mathrm{R}}b, bc, ca_{\mathrm{L}}, d_{\mathrm{R}}d', ee', ff',$
  $a'_{\mathrm{R}}b', b'c', c'a'_{\mathrm{L}}, $

  $e'f, e'c', f'd_{\mathrm{L}}, f'a'_{\mathrm{L}}, d'e, d'b',$
  $a'_{\mathrm{R}}eb, b'fc, c'd_{\mathrm{L}}d_{\mathrm{R}}a\rangle$,
\end{center}
see the dashed lines in \Pic \ref{fig in examp:split}.
\end{example}

\subsection{$\R$-endomorphism algebra}

The following result provide a method to compute the $\R$-endomorphism algebra $A_{\R}$ of an $\R$-summed module $M_{\R}$ over an SAG-algebra $A$.

\begin{theorem}\label{thm:main 1}
Let $A$ be an SAG-algebra whose bound quiver is $(\Q,\I)$ and $\R$ be an arbitrary left forbidden arrow index of $\Q_1$.
Then $A_{\R} \cong \R(A)$ $(=\kk\R(\Q)/\R(\I))$ is an SAG-algebra.
\end{theorem}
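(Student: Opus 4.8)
The plan is to compute the bound quiver of $A_{\R}=\End_A(M_{\R})$ directly from the indecomposable summands of $M_{\R}$ and the irreducible morphisms between them in $\mathrm{add}(M_{\R})$, and then read off that the result is $(\R(\Q),\R(\I))$. First I would pin down the vertices: the indecomposable direct summands of $M_{\R}=A\oplus\bigoplus_{\alpha\in\R}\alpha A$ are the indecomposable projectives $P(v)=e_vA$ $(v\in\Q_0)$ together with the arrowed modules $\alpha A$ $(\alpha\in\R)$. Each $\alpha A$ is indecomposable by Lemma \ref{lemm:alphaA}(2), and it is non-projective: by Lemma \ref{lemm:alphaA}(1) it is a proper direct summand of $\rad(e_{\s(\alpha)}A)$, and being a directed (uniserial) string module with simple top $S(\t(\alpha))$ it cannot be isomorphic to any $P(u)$ --- the only candidate $P(\t(\alpha))$ is ruled out because $\alpha$ being left forbidden forces $\t(\alpha)$ to be a non-sink and, whenever $\M^{-1}(\alpha A)$ is non-trivial, to be the source of two arrows, so $P(\t(\alpha))$ is not uniserial. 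Hence (the $\alpha A$ being pairwise non-isomorphic, which one may assume in the relevant situations) the vertex set of $A_{\R}$ is $\Q_0\sqcup\{\alpha A\mid\alpha\in\R\}$, which is precisely $\R(\Q)_0=\Q_0\cup\Q_0^{\spl}$ via the bijection $\mathfrak v$ of Step 1.

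Next I would classify the irreducible morphisms in $\mathrm{add}(M_{\R})$, which give the arrows. Between two projectives, $\Hom_A(P(u),P(w))$ has a path basis $\{h_p\}$ with $h_p=h_{a_1}\circ\cdots\circ h_{a_n}$; by Lemma \ref{lemm:decomp-two proj} no $h_a$ factors through any $\beta A$ with $\beta\in\R$, so $h_a$ stays irreducible in $\mathrm{add}(M_{\R})$ exactly when $a\in\Q_1\setminus\R$, whereas for $a\in\R$ it factors as $h_a=f_a\circ g_a$ through $aA$ by Lemma \ref{lemm:decomp} and is no longer irreducible; this produces the arrows $\Q_1\setminus\R$ of Step 2. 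For $\alpha\in\R$, Lemma \ref{lemm:decomp} supplies the projective cover $g_{\alpha}\colon P(\t(\alpha))\twoheadrightarrow\alpha A$ and the inclusion $f_{\alpha}\colon\alpha A\hookrightarrow P(\s(\alpha))$; Lemmas \ref{lemm:decomp-eA} and \ref{lemm:decomp-alphaA} show that neither of them factors through another projective or another $\beta A$, and a short argument using that $\alpha A$ is uniserial with top $S(\t(\alpha))$ (together with the shape of $\End_A(P(\t(\alpha)))$ and $\End_A(P(\s(\alpha)))$) shows that $\rad/\rad^2$ of the relevant Hom-spaces is one-dimensional, spanned by $g_{\alpha}$ respectively $f_{\alpha}$; these become the arrows $\alpha_{\Right}\colon\mathfrak v(\alpha A)\to\t(\alpha)$ and $\alpha_{\Left}\colon\s(\alpha)\to\mathfrak v(\alpha A)$. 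Finally, Lemma \ref{lemm:decomp-two alphaA} shows that every nonzero morphism between two split modules $\alpha A$, $\beta A$ factors through an indecomposable projective, so there is no arrow joining two vertices of $\Q_0^{\spl}$. Together with the evident identifications of sources and targets, this yields exactly $\R(\Q)_1=(\Q_1\setminus\R)\cup\R^{\spl}$ with Steps 2--4.

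For the relations, the assignment $v\mapsto\mathrm{id}_{P(v)}$, $\mathfrak v(\alpha A)\mapsto\mathrm{id}_{\alpha A}$, $a\mapsto h_a$ $(a\notin\R)$, $\alpha_{\Left}\mapsto f_{\alpha}$, $\alpha_{\Right}\mapsto g_{\alpha}$ extends to a surjective $\kk$-algebra homomorphism $\kk\R(\Q)\to A_{\R}$. Since every vertex of $\Q_0^{\spl}$ carries only the incoming arrow $\alpha_{\Left}$ and the outgoing arrow $\alpha_{\Right}$, any path of $\R(\Q)$ meets such a vertex only along the subpath $\alpha_{\Left}\alpha_{\Right}$, which is sent to $h_{\alpha}=f_{\alpha}\circ g_{\alpha}\ne 0$ (so $\alpha_{\Left}\alpha_{\Right}\notin\R(\I)$, cf. Remark \ref{rmk:R(A) is string}); contracting these subpaths identifies the paths of $\R(\Q)$ with the paths of $\Q$ up to the truncations at the two endpoints recorded by the formula for $p^{\spl}$ in Step 5, and under this identification $p^{\spl}$ is sent to $0$ if and only if the underlying path $p$ lies in $\I$. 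Hence the kernel is generated by $\{p^{\spl}\mid p\in\I\}=\R(\I)$, giving $A_{\R}\cong\kk\R(\Q)/\R(\I)=\R(A)$, which is an SAG-algebra by a direct verification of the string and almost-gentle axioms against Steps 1--6 (Remark \ref{rmk:R(A) is string}). I expect the main obstacle to be the second paragraph: one must be certain that Lemmas \ref{lemm:decomp}--\ref{lemm:decomp-two alphaA} together capture \emph{every} irreducible morphism in $\mathrm{add}(M_{\R})$, with no omission and no repetition --- in particular excluding spurious arrows at the split vertices and controlling factorizations through the radicals of the $\End_A(P(v))$ when $\Q$ carries oriented cycles.
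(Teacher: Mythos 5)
Your proposal follows essentially the same route as the paper's proof: vertices of the bound quiver of $A_{\R}$ are identified with the indecomposable summands of $M_{\R}$, arrows with the irreducible morphisms classified by Lemmas \ref{lemm:decomp}--\ref{lemm:decomp-two alphaA} (factorization of $h_{\alpha}$ through $\alpha A$ for $\alpha\in\R$, irreducibility of $h_a$ for $a\notin\R$, and exclusion of arrows between split vertices), and relations via the correspondence $p\mapsto p^{\spl}$. Your treatment of the relations (the surjection $\kk\R(\Q)\to A_{\R}$ and its kernel) is somewhat more explicit than the paper's, which merely asserts that the arrow correspondence induces the correspondence of relation generators, but the argument is the same in substance.
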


\begin{proof}
Assume $A_{\R} = \kk\Q_{\R}/\I_{R}$, where $\Q_{\R}=((\Q_{\R})_0, (\Q_{\R})_1, \s_{\R}, \t_{\R})$.
Let $\mathcal{X}$ be the full subcategory of $\modcat(A_{\R})$ generated by $\mathfrak{X}$;
$\Irr_{\mathcal{X}}(X_1, X_2)$ be the set of all irreducible homomorphisms in $\mathcal{X}$ from $X_1$ to $X_2$
($X_1$ and $X_2$ are indecomposable $A$-modules);
$\Basis(X_1, X_2)$ be a basis of $\Irr_{\mathcal{X}}(X_1, X_2)$ as a $\kk$-linear space;
and $\displaystyle \Irr(\mathcal{X}) := \bigcup_{X_1,X_2 \in \ind(\mathcal{X})} \Basis(X_1, X_2)$.

We only prove $\R(\Q)=\Q_{\R}$. Indeed, we will provide a one-to-one correspondence between $\R(\Q)$ and $\Irr(\mathcal{X})$ in this proof, and it admits a one-to-one correspondence between the generators of $\I_{\R}$ and the generators of $\R(\I)$.

First of all, by the definition of $A_{\R} = \End_A(A\oplus \bigoplus_{\alpha\in\R} \alpha A)$,
we have a one-to-one correspondence
\begin{align}\label{formula 1 in main 1}
  (\Q_{\R})_0
  \mathop{\to}\limits^{\spadesuit} \mathfrak{X}:=\{ \e_i A \mid i\in \Q_0 \} \cup \{\alpha A\mid \alpha\in \R\}
  \mathop{\to}\limits^{\clubsuit} \Q_0 \cup \Q_0^{\spl} = \R(\Q)_0
\end{align}
from $(\Q_{\R})_0$ to $\R(\Q)_0$, where
$\spadesuit$ is obtained by $\{\mathrm{id}_X \in \R(A) \mid X \in \mathfrak{X}\}$
is a complete set of primitive orthogonal idempotents of $\R(A)$,
and $\clubsuit$ is obtained by Step 1.

Second, we have a one-to-one correspondence
\begin{center}
  $(\Q_{\R})_1 \mathop{\to}\limits^{\heart} \Irr(\mathcal{X})$.
\end{center}
By Lemma \ref{lemm:decomp}, if $\alpha \in \R$, then $h_{\alpha}: P(\t(\alpha)) \to P(\s(\alpha))$ has a
decomposition $h_{\alpha} = fg$ through $\alpha A$.
By Lemma \ref{lemm:decomp-eA}, $g:P(\t(\alpha)) \to \alpha A$ can not be decomposed through any indecomposable projective module which does not isomorphic to $P(\t(\alpha))$,
and $f: \alpha A \to P(\s(\alpha))$ can not be decomposed through any indecomposable projective module which does not isomorphic to $P(\s(\alpha))$.
By Lemma \ref{lemm:decomp-alphaA}, $g$ and $f$ can not be decomposed through any $\beta A$ ($\not\cong \alpha A$, $\beta\in\Q_1$).
That is, $f$ and $g$ can be seen as two base vectors of
$\Irr_{\mathcal{X}}(\alpha A, P(\s(\alpha)))$ and $\Irr_{\mathcal{X}}(P(\t(\alpha),\alpha A))$,
and then, they are corresponded by two arrows $\mathfrak{v}(\alpha A) \to \t(\alpha)$ and $\s(\alpha) \to \mathfrak{v}(\alpha A)$ in $\R(\Q)_1$
under the correspondence $\heart$, respectively.

On the other hand, if $\alpha \in \Q_1\backslash\R$, then, by Lemma \ref{lemm:decomp-two proj},
we obtain that $h_{\alpha}$ is irreducible in $\mathcal{X}$,
and by Lemma \ref{lemm:decomp-two alphaA}, for arbitrary two left forbidden arrows in $\R$,
each non-zero homomorphism in $\Hom_{\mathcal{X}}(\alpha A, \beta A)$ is not irreducible.
Therefore, we have
\begin{align*}
\R(\Q)_1 \mathop{\to}\limits^{\diam}_{1-1} &  \Irr(\mathcal{X}) \\
  = & \bigcup_{X_1,X_2 \in \ind(\mathcal{X})} \Basis(X_1, X_2) \\
  = & \bigcup_{v\in\Q_0,\alpha\in\R } \Basis(P(v), \alpha A)
      \bigcup\bigcup_{v\in\Q_0,\alpha\in\R } \Basis(\alpha A, P(v)) \\
    & \ \ \ \
      \bigcup\bigcup_{\alpha\in\Q_1\backslash\R} \Basis(P(\t(\alpha)), P(\s(\alpha))),
\end{align*}
where $\Basis(P(v), \alpha A)$, $\Basis(\alpha A, P(v))$, and $\Basis(P(\t(\alpha)), P(\s(\alpha)))$ are described by Lemmas \ref{lemm:decomp-eA}, \ref{lemm:decomp-alphaA}, and \ref{lemm:decomp-two proj}.
Therefore, we obtain a one-to-one correspondence
\begin{align}\label{formula 2 in main 1}
  (\Q_{\R})_1 \mathop{\to}\limits^{1-1} (\R(\Q))_1
\end{align}
by $\heart$ and $\diam$. Moreover, it is easy to see that four correspondences $\spadesuit$, $\clubsuit$, $\heart$, and $\diam$ show that the following two diagrams
\begin{center}
  $\xymatrix{
    (\Q_{\R})_1 \ar[d]_{(\ref{formula 2 in main 1})} \ar[r]^{\s_{\R}}
  & (\Q_{\R})_0 \ar[d]^{(\ref{formula 1 in main 1})}  \\
    \R(\Q)_1 \ar[r]_{\R(\s)}
  & \R(\Q)_0
  }$
\
  $\xymatrix{
    (\Q_{\R})_1 \ar[d]_{(\ref{formula 2 in main 1})} \ar[r]^{\t_{\R}}
  & (\Q_{\R})_0 \ar[d]^{(\ref{formula 1 in main 1})}  \\
    \R(\Q)_1 \ar[r]_{\R(\t)}
  & \R(\Q)_0
  }$
\end{center}
commute. Thus, $\Q_{\R} = \R(\Q)$.

Finally, $\R(\Q)$ to be an SAG-algebra is shown in by Remark \ref{rmk:R(A) is string}.
\end{proof}

Notice that if $A$ is a string algebra, then for some left forbidden arrow index $\R$,
it may be holds that $A_{\R} \cong \R(A)$. For example, the string algebra $A$ given by Example \ref{examp:string},
and $\R$ the left forbidden arrow index given by Example \ref{examp:split},
one can check that $A_{\R} \cong \R(A)$ in this instance.

\section{On representation types of SAG-algebras}

Recall that a finite-dimensional Algebra $A$ is said to be {\defines representation-finite}
(resp. {\defines representation-infinite}) if the set $\ind(\modcat(A))$ of all isoclasses of indecomposable $A$-modules is a finite (resp. infinite) set.

\subsection{Representation types of SAG-algebras and $\R$-endomorphism algebras}

Theorem \ref{thm:BR1987} admits that the following lemma.
\begin{lemma} \label{lemm:string repr}
A string algebra is representation-infinite if and only if its bound quiver has at least one band.
\end{lemma}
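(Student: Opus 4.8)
The plan is to read the statement off the Butler--Ringel classification, Theorem~\ref{thm:BR1987}, which gives a bijection $\M\colon \str(A)\cup(\band(A)\times\mathscr{J})\to\ind(\modcat(A))$; thus $\ind(\modcat(A))$ is finite if and only if both $\str(A)$ and $\band(A)\times\mathscr{J}$ are finite.

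For the ``if'' direction, suppose the bound quiver of $A$ admits a band $b$. The index set $\mathscr{J}$ of Jordan blocks with nonzero eigenvalue is infinite --- it already contains one block of each size $n\ge 1$ --- so $\band(A)\times\mathscr{J}$ is infinite; equivalently, the band modules $\M(b,J_n)$, $n\ge 1$, are pairwise non-isomorphic, since $\dim_{\kk}\M(b,J_n)$ grows linearly in $n$. Hence $\ind(\modcat(A))$ is infinite, i.e. $A$ is representation-infinite.

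For the ``only if'' direction I argue contrapositively: assuming the bound quiver of $A$ has no band, I will show $\str(A)$ is finite, and then $\ind(\modcat(A))=\M(\str(A))$ is finite by Theorem~\ref{thm:BR1987}. It suffices to bound the lengths of strings. I introduce a finite directed graph $\Gamma$ whose vertices are letters $x\in\Q_1\cup\Q_1^{-1}$ equipped with the bounded ``trailing memory'' needed to detect a relation of $\I$ (finite, since $\I$ is generated by finitely many paths of bounded length), and whose arrows encode exactly the legality of appending one more letter to a string, i.e. conditions (Str1)--(Str4). Then strings of length $\ell$ correspond to walks of length $\ell$ in $\Gamma$. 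If $\Gamma$ is acyclic, every walk has length at most $|\Gamma_0|$, so strings have bounded length; since $\Q$ is finite and, by (S1), branches at most two ways in each direction, this forces $\str(A)$ to be finite. If instead $\Gamma$ contains an oriented cycle, traversing it twice exhibits a cyclic word $w$ with $w^2$ again a string; writing $w=u^m$ with $u$ primitive, one checks that $u$ satisfies (Band1) --- the wrap-around is legal because it is legal inside $w$ --- and (Band2) by primitivity, so $u$ is a band, contradicting the hypothesis. Hence $\str(A)$ is finite, completing the proof.

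The one point requiring care is precisely this last step: ensuring that a repeated local configuration in a sufficiently long string genuinely produces a band rather than a cyclic walk that violates (Band1) or is an imprimitive power. This is exactly why $\Gamma$ must carry enough memory to make legality of concatenation --- including the no-relation clause of (Str1) for relations of arbitrary length, not merely length two --- a local condition on consecutive vertices; the sign-alternation clauses of (Str2)--(Str3) and the composability clause (Str4) are already local and pass through the wrap-around, and imprimitivity is removed by passing to the primitive root. With $\Gamma$ set up this way, the argument is a routine application of the pigeonhole principle.
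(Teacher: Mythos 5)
Your proof is correct, but for the harder direction it takes a genuinely different route from the paper. The paper obtains the lemma by combining Theorem \ref{thm:BR1987} with an appeal to the (first) Brauer--Thrall theorem: the ``if'' direction is read off from the band-module part of the Butler--Ringel bijection, and the ``only if'' direction is delegated to the general fact that a representation-infinite algebra has indecomposables of unbounded dimension, which (all indecomposables being string modules in the absence of bands) forces arbitrarily long strings. Your argument replaces the Brauer--Thrall input by an elementary, self-contained pigeonhole argument: you encode the legality of extending a string as a finite automaton $\Gamma$ whose states carry the bounded trailing memory needed to detect the (finitely many, bounded-length) generators of $\I$, so that no bands forces $\Gamma$ to be acyclic and hence bounds the lengths of strings, while an oriented cycle in $\Gamma$ yields a cyclic word $w$ with $w^2$ a string, whose primitive root satisfies (Band1) (the wrap-around occurs inside $w^2$) and (Band2). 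This buys independence from a nontrivial external theorem at the cost of having to set up $\Gamma$ carefully, which you do; the ``if'' direction (the family $\M(b,J_n)$ of pairwise non-isomorphic band modules) is the same in both treatments. One small point worth adding to your cycle-to-band step: a cycle of $\Gamma$ consisting only of direct letters (or only of inverse letters) would produce arbitrarily long paths avoiding $\I$, contradicting the finite-dimensionality of $A$; hence the cyclic word $w$ automatically contains both a direct and an inverse letter, which is implicitly needed for its primitive root to be a genuine band carrying band modules.
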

It can be shown by Brauer-Thrall Theorem, see for example, \cite[Chapter IV, Section IV.5]{ASS2006}.

\begin{theorem} \label{thm:main 2} 
Let $A=\kk\Q/\I$ be an SAG-algebra. Then $A$ is representation-finite if and only if,
for all left forbidden arrow indices $\R$, the $\R$-endomorphism algebra $A_{\R}$ is representation-finite.
\end{theorem}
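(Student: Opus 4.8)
The idea is to reduce the statement to the combinatorics of bands and then to match the bands of $(\Q,\I)$ with those of $(\R(\Q),\R(\I))$. By Theorem \ref{thm:main 1}, for every left forbidden arrow index $\R$ the algebra $A_{\R}$ is isomorphic to $\R(A)=\kk\R(\Q)/\R(\I)$, which is again an SAG-algebra, in particular a string algebra; so Lemma \ref{lemm:string repr} applies to both $A$ and $A_{\R}$, and $A$ (resp. $A_{\R}$) is representation-finite exactly when $(\Q,\I)$ (resp. $(\R(\Q),\R(\I))$) has no band. It therefore suffices to prove, for each fixed $\R$, that $(\Q,\I)$ has a band if and only if $(\R(\Q),\R(\I))$ does; the ``only if'' part of the theorem is then immediate, and the ``if'' part follows by taking $\R=\varnothing$, for which $\R(A)=A$ and hence $A_{\varnothing}\cong A$ by Remark \ref{rmk:R(A) is string}.

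To set up the correspondence, for $\alpha\in\R$ write $v_{\alpha}:=\mathfrak{v}(\alpha A)\in\Q_0^{\spl}$. By Steps 1--4 of Subsection \ref{subsect:R bound quiver}, in $\R(\Q)$ the vertex $v_{\alpha}$ is the target of exactly one arrow, $\alpha_{\Left}$, and the source of exactly one arrow, $\alpha_{\Right}$, and $\alpha_{\Left}\alpha_{\Right}\notin\R(\I)$ by Remark \ref{rmk:R(A) is string}. By (Str2)--(Str3) every junction between consecutive maximal directed pieces of a string is a turning point, and a short inspection of the two types of turning point shows none can sit at $v_{\alpha}$ (whose only incident arrows are $\alpha_{\Left}$ in and $\alpha_{\Right}$ out); hence $v_{\alpha}$ can only occur strictly inside a maximal directed piece. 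Regarding a band as a cyclic walk, every occurrence of $v_{\alpha}$ on a band of $(\R(\Q),\R(\I))$ is therefore flanked by $\alpha_{\Left}\alpha_{\Right}$ or by $\alpha_{\Right}^{-1}\alpha_{\Left}^{-1}$. I can thus define a contraction $\widehat b$ of a band $b$ on $(\R(\Q),\R(\I))$ by replacing each segment $\alpha_{\Left}\alpha_{\Right}$ by $\alpha$ and each $\alpha_{\Right}^{-1}\alpha_{\Left}^{-1}$ by $\alpha^{-1}$ (for $\alpha\in\R$) and keeping every other letter, and, inversely, an expansion $\widetilde{b'}$ of a band $b'$ on $(\Q,\I)$ by replacing each $\alpha\in\R$ by $\alpha_{\Left}\alpha_{\Right}$ and each $\alpha^{-1}$ by $\alpha_{\Right}^{-1}\alpha_{\Left}^{-1}$. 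These operations are local (letter by letter), hence commute with cyclic rotation and with formal inversion, are mutually inverse, and descend to equivalence classes.

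What remains --- and this is the main obstacle --- is to check that $\widehat{\,\cdot\,}$ and $\widetilde{\,\cdot\,}$ carry bands to bands. The key is that by Step 6 the relations of $\R(\I)$ are precisely the paths $p^{\spl}$ for $p\in\I$, all of length two since $A$ is almost gentle. One verifies that a length-two directed subpath of an expanded walk is either internal to a single expanded arrow, hence of the form $\alpha_{\Left}\alpha_{\Right}\notin\R(\I)$, or straddles the seam between two consecutive original letters $a,b$, in which case it equals $(ab)^{\spl}$; and $(ab)^{\spl}\in\R(\I)$ exactly when $ab\in\I$, since distinct ordered pairs of original arrows give distinct length-two paths under $(\,\cdot\,)^{\spl}$. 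This establishes (Str1) in both directions; (Str2)--(Str3) persist because expansion introduces no new pattern $xx^{-1}$ and keeps all turning points at old vertices; and (Str4) together with the closure and primitivity conditions (Band1)--(Band2) transfer formally from the fact that $\widehat{\,\cdot\,}$ and $\widetilde{\,\cdot\,}$ are mutually inverse and compatible with rotation, inversion, concatenation and triviality --- a nontrivial band going to a nontrivial band since every letter of $\Q_1\setminus\R$ survives and a band built only from expanded arrows still has positive length after contraction. The delicate point is exactly this case analysis at the seams, tracking whether each letter lies in $\R$ and whether it is direct or inverse; once it is pinned down, the band correspondence --- and hence the theorem --- follows.
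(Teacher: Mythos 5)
Your proposal is correct and follows essentially the same route as the paper: reduce both representation types to the existence of bands via Lemma \ref{lemm:string repr}, then observe that each new vertex $\mathfrak{v}(\alpha A)$ has a unique incoming arrow $\alpha_{\Left}$ and a unique outgoing arrow $\alpha_{\Right}$ with $\alpha_{\Left}\alpha_{\Right}\notin\R(\I)$, so that bands of $(\R(\Q),\R(\I))$ and of $(\Q,\I)$ correspond by contracting/expanding the arrows of $\R$. The only difference is organizational: you package both directions as a single band bijection, whereas the paper proves the contraction direction here and defers the expansion direction (your $\widetilde{b'}=b^{\spl}$) to Corollary \ref{coro:main 3 pre}.
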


\begin{proof}
If, for arbitrary left forbidden arrow index $\R$, $A_{\R}$ is always representation-finite,
then $A$ is representation-finite which can be proved by the trivial case $\R=\varnothing$.

Next, assume that $A$ is representation-finite. If there is a forbidden left arrow index $\R$
such that $A_{\R}$ is representation-infinity, then, by Lemma \ref{lemm:string repr},
the bound quiver $(\Q_{\R}, \I_{\R})$ of $A_{\R}$ contains a band $b$.
By $(\Q_{\R})_1 = (\Q_1\backslash\R)\cup\R^{\spl}$, all arrows on $b$ can be divided to three classes:
\begin{itemize}
  \item[(1)] the arrows lying in $(\Q_1\backslash\R)\cup\R^{\spl}$;
  \item[(2)] the arrows lying in $\R^{\spl}$ which are of the form $\alpha_{\Left}$;
  \item[(3)] the arrows lying in $\R^{\spl}$ which are of the form $\alpha_{\Right}$.
\end{itemize}
If there is an arrow on $b$ which is of the form $\alpha_{\Right}: \mathfrak{v}(\alpha A) \to \t(\alpha)$,
then $\alpha_{\Left}: \s(\alpha) \to \mathfrak{v}(\alpha A)$ is also an arrow on $b$.
Otherwise, since $b$ can be seen as a cycle without relation on $(\Q_{\R},\I_{\R})$,
we have two cases as following:
\begin{itemize}
  \item[(A)] there exists an arrow $\beta$ on $b$ such that $\t(\beta)=\mathfrak{v}(\alpha A)$,
    cf. \Pic \ref{fig in thm:SAG-reprtype 1} (I);
  \item[(B)] there exists an arrow $\beta$ on $b$ such that $\s(\beta)=\mathfrak{v}(\alpha A)$,
    cf. \Pic \ref{fig in thm:SAG-reprtype 1} (II).
\end{itemize}
\begin{figure}[htbp]
\centering
\begin{tikzpicture}
\draw[orange!50][line width =  13pt][orange!50](2,0) arc(0:360:2);
\draw[orange] (-2.2,0) node[left]{$b$};
\draw[->][line width = 0.8pt][rotate around={  0:(0,0)}] (2,0) arc(0:40:2);
\draw    [line width = 0.8pt][rotate around={ 45:(0,0)}] (2,0) arc(0:40:2) [dotted];
\draw    [line width = 0.8pt][rotate around={ 90:(0,0)}] (2,0) arc(0:40:2) [dotted];
\draw    [line width = 0.8pt][rotate around={135:(0,0)}] (2,0) arc(0:40:2) [dotted];
\draw    [line width = 0.8pt][rotate around={180:(0,0)}] (2,0) arc(0:40:2) [dotted];
\draw    [line width = 0.8pt][rotate around={225:(0,0)}] (2,0) arc(0:40:2) [dotted];
\draw    [line width = 0.8pt][rotate around={270:(0,0)}] (2,0) arc(0:40:2) [dotted];
\draw[->][line width = 1.6pt][rotate around={315:(0,0)}] (2,0) arc(0:35:2) [red];
\draw (2,-0.2) node{\tiny$\mathfrak{v}(\alpha A)$};
\draw[rotate around={ 22.5:(0,0)}]
      (2,-0.2) node[right]{\tiny$\alpha_{\Right}$};
\draw[rotate around={-22.5:(0,0)}]
      (2,-0.2) node[right]{\tiny$\beta$};
\draw (2.2,-0.8) node[right]{\tiny$\alpha_{\Left}$};
\draw[line width = 0.8pt][<-] (2.2,-0.4) arc(180:215:2);
\draw (0,-2.5) node{(I)};
\end{tikzpicture}
\ \
\begin{tikzpicture}
\draw[orange!50][line width =  13pt][orange!50](2,0) arc(0:360:2);
\draw[orange] (-2.2,0) node[left]{$b$};
\draw[->][line width = 0.8pt][rotate around={  0:(0,0)}] (2,0) arc(0:40:2);
\draw    [line width = 0.8pt][rotate around={ 45:(0,0)}] (2,0) arc(0:40:2) [dotted];
\draw    [line width = 0.8pt][rotate around={ 90:(0,0)}] (2,0) arc(0:40:2) [dotted];
\draw    [line width = 0.8pt][rotate around={135:(0,0)}] (2,0) arc(0:40:2) [dotted];
\draw    [line width = 0.8pt][rotate around={180:(0,0)}] (2,0) arc(0:40:2) [dotted];
\draw    [line width = 0.8pt][rotate around={225:(0,0)}] (2,0) arc(0:40:2) [dotted];
\draw    [line width = 0.8pt][rotate around={270:(0,0)}] (2,0) arc(0:40:2) [dotted];
\draw[<-][line width = 1.6pt][rotate around={315:(0,0)}] (2,0) arc(0:35:2) [red];
\draw (2,-0.2) node{\tiny$\mathfrak{v}(\alpha A)$};
\draw[rotate around={ 22.5:(0,0)}]
      (2,-0.2) node[right]{\tiny$\alpha_{\Right}$};
\draw[rotate around={-22.5:(0,0)}]
      (2,-0.2) node[right]{\tiny$\beta$};
\draw (2.2,-0.8) node[right]{\tiny$\alpha_{\Left}$};
\draw[line width = 0.8pt][<-] (2.2,-0.4) arc(180:215:2);
\draw (0,-2.5) node{(II)};
\end{tikzpicture}
\caption{}
\label{fig in thm:SAG-reprtype 1}
\end{figure}
In the case (A), we have $\beta \alpha_{\Right} \in \I$ by using the definition of SAG-algebra,
it contradicts with $b$ to be a band.
In the case (B), we have $\alpha_{\Left}\alpha_{\Right} \in \I$.
However, by Remark \ref{rmk:R(A) is string} and Theorem \ref{thm:main 1},
it contradicts with $\alpha_{\Left}\alpha_{\Right} \notin \I_R = \R(\I)$.
\end{proof}

\begin{corollary} \label{coro:main 3 pre}
Let $A=\kk\Q/\I$ be an SAG-algebra such that, for some left forbidden arrow index $\R \subseteq \Q_1$, the $\R$-endomorphism algebra $A_{\R}$ is representation-finite. Then $A$ is representation-finite.
\end{corollary}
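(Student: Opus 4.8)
The plan is to establish the contrapositive: if $A$ is representation-infinite, then $A_{\R}$ is representation-infinite for \emph{every} left forbidden arrow index $\R$. Granting this, the corollary is immediate, since the hypothesis furnishes one left forbidden arrow index for which $A_{\R}$ is representation-finite.

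First I would apply Lemma \ref{lemm:string repr}: an SAG-algebra is in particular a string algebra, so ``$A$ representation-infinite'' forces a band $b=(\wp_1,\dots,\wp_n)$ on $(\Q,\I)$. Fix an arbitrary left forbidden arrow index $\R$. I would then \emph{split} $b$ along $\R$: reading $b$ as its cyclic sequence of letters in $\Q_1\cup\Q_1^{-1}$, replace every letter $a\in\R$ by $a_{\Left}a_{\Right}$, every letter $a^{-1}$ with $a\in\R$ by $a_{\Right}^{-1}a_{\Left}^{-1}$, and leave all other letters unchanged; call the resulting cyclic word $b^{\spl}$. By Theorem \ref{thm:main 1} and Remark \ref{rmk:R(A) is string}, $A_{\R}\cong\R(A)=\kk\R(\Q)/\R(\I)$ is again an SAG-algebra, so by Lemma \ref{lemm:string repr} it suffices to check that $b^{\spl}$ is a band on $(\R(\Q),\R(\I))$.

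The verification of the band axioms for $b^{\spl}$ is a transcription through Steps 1--6 of Subsection \ref{subsect:R bound quiver}. For (Str1): each path piece $\wp_i$ becomes a walk on $\R(\Q)$ because $\alpha_{\Left}\colon\s(\alpha)\to\mathfrak{v}(\alpha A)$ and $\alpha_{\Right}\colon\mathfrak{v}(\alpha A)\to\t(\alpha)$ by Steps 3--4, and it avoids $\R(\I)$: since $\R(\I)$ is generated by the $p^{\spl}$ with $p\in\I$ (which are paths of length two, as $A$ is an SAG-algebra) and since $\alpha_{\Left}\alpha_{\Right}\notin\R(\I)$ by Remark \ref{rmk:R(A) is string}, any occurrence of a generator $p^{\spl}$ with $p=a_1a_2\in\I$ inside a path piece of $b^{\spl}$ would force the letter following $a_1$ in the corresponding path piece of $b$ to be $a_2$, contradicting that consecutive arrows of a string do not compose to an element of $\I$. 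Conditions (Str2)--(Str4) and (Band1) are inherited from $b$: splitting changes neither the sources, targets, nor turning points of the $\wp_i$, and the no-backtracking requirement together with the ``wrap-around concatenation is not a relation'' requirement survive because the split letters $a_{\Left},a_{\Right}$ keep the name $a$ and because a concatenation lying outside $\I$ produces, via Step 5, a concatenation lying outside $\R(\I)$. Finally, for (Band2) one collapses the rigid pairs: in $b^{\spl}$ every $\alpha_{\Left}$ is immediately followed by $\alpha_{\Right}$ and every $\alpha_{\Right}^{-1}$ is immediately followed by $\alpha_{\Left}^{-1}$, so the operation reversing the split is well defined on cyclic words respecting these pairs and sends $b^{\spl}$ back to $b$; if $b^{\spl}$ were a proper power $u^{m}$ ($m\ge2$), then after replacing $b^{\spl}$ by a suitable rotation (possible because the vertices $\mathfrak{v}(\alpha A)$ occur with exactly one incoming and one outgoing letter, so not every residue can be the starting position of a rigid pair of the period) the period $u$ respects the pairs, and collapsing transports the decomposition to a proper power of $b$, contradicting that $b$ is a band.

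I expect the only genuinely delicate point to be (Band2): one must rule out that splitting accidentally turns the aperiodic band $b$ into a proper power, which is exactly the rotation argument above and hinges on the ``non-branching, degree-two'' nature of the inserted vertices $\mathfrak{v}(\alpha A)$ along a band. Everything else --- well-definedness of $b^{\spl}$ as a walk, avoidance of $\R(\I)$, and the remaining string and band conditions --- follows directly from the description of $(\R(\Q),\R(\I))$ in Steps 1--6, together with the fact, recorded in Remark \ref{rmk:R(A) is string}, that the only new compositions $\alpha_{\Left}\alpha_{\Right}$ are not relations.
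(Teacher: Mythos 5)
Your proposal is correct and follows essentially the same route as the paper: the contrapositive, producing from a band $b$ on $(\Q,\I)$ the split band $b^{\spl}$ on $(\R(\Q),\R(\I))$ via the substitution $a\mapsto a_{\Left}a_{\Right}$, $a^{-1}\mapsto a_{\Right}^{-1}a_{\Left}^{-1}$ for $a\in\R$, and invoking Lemma \ref{lemm:string repr} twice. The paper simply asserts that $b^{\spl}$ is a band, whereas you verify the string and band axioms (including the aperiodicity condition via the collapsing argument), so your write-up is if anything more complete than the published proof.
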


\begin{proof}
Assume than $A_{\R}$ is representation-finite. We show that $A$ is representation-finite.
If $A$ is representation-infinite, then by Lemma \ref{lemm:string repr},
the bound quiver $(\Q, \I)$ of $A$ contains a band $b = b_1b_2\cdots b_n$,
here, $b_i \in \Q_1\cup\Q_1^{-1}$, $\s(b_i) = v_i$ ($1\le i\le n$), $\t(b_n)=\s(b_1)$.
By using Theorem \ref{thm:main 1}, $(\Q_{\R}, \I_{\R})$ contains a band which is of the form
\[b^{\spl} = b_1^{\spl}b_2^{\spl}\cdots b_n^{\spl}\]
where
\[b_i^{\spl} = \begin{cases}
 b_i^{\spl}, & \text{if } b_i\in \Q_1; \\
 (b_i)_{\Right}^{-1}(b_i)_{\Left}^{-1}, & \text{if } b_i\in \Q_1^{-1} \text{ and } b_i\in\R; \\
 b_i^{-1}, & \text{if } b_i\in \Q_1^{-1} \text{ and } b_i\notin\R.
\end{cases}\]
It contradicts with Lemma \ref{lemm:string repr} as required.
\end{proof}

\begin{corollary} \label{coro:main 3-1}
Let $A=\kk\Q/\I$ be an SAG-algebra. Then the following statements are equivalent:
\begin{itemize}
  \item[\rm(1)] $A$ is representation-finite;
  \item[\rm(2)] there is a left forbidden arrow index $\R$ such that $A_{\R}$ is representation-finite;
  \item[\rm(3)] for arbitrary left forbidden arrow index $\R$ such that $A_{\R}$ is representation-finite.
\end{itemize}
\end{corollary}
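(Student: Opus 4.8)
The plan is to close the cycle of implications $(3)\Rightarrow(2)\Rightarrow(1)\Rightarrow(3)$, each link being an immediate consequence of a result already in hand, so the argument will be short. I would begin with $(3)\Rightarrow(2)$, which is essentially trivial: by Definition \ref{def:R}(1) the empty set $\R=\varnothing$ is a (degenerate) left forbidden arrow index, hence if $A_{\R}$ is representation-finite for \emph{every} left forbidden arrow index then in particular $A_{\varnothing}$ is representation-finite, which is exactly (2). One could even bypass (2) at this point, since by Remark \ref{rmk:R(A) is string} we have $A_{\varnothing}=A$; but routing through (2) keeps the three conditions on equal footing.

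Next, $(2)\Rightarrow(1)$ is precisely Corollary \ref{coro:main 3 pre}: the existence of one left forbidden arrow index $\R$ with $A_{\R}$ representation-finite forces $A$ to be representation-finite, because by Theorem \ref{thm:main 1} a band on $(\Q,\I)$ would be carried by the substitution $b\mapsto b^{\spl}$ of Subsection \ref{subsect:R bound quiver} to a band on the bound quiver of $A_{\R}$, contradicting representation-finiteness of $A_{\R}$ via Lemma \ref{lemm:string repr}.

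Finally, $(1)\Rightarrow(3)$ is the substantive direction, and it is exactly Theorem \ref{thm:main 2}: if $A$ is representation-finite then $A_{\R}$ is representation-finite for every left forbidden arrow index $\R$. This is where all the real content lies — the proof of Theorem \ref{thm:main 2} shows that a hypothetical band in $(\Q_{\R},\I_{\R})$ containing some arrow $\alpha_{\Right}$ must also contain $\alpha_{\Left}$ (using that the relations of an SAG-algebra have length two, together with $\alpha_{\Left}\alpha_{\Right}\notin\R(\I)$ from Remark \ref{rmk:R(A) is string}), so such a band descends to a genuine band on $(\Q,\I)$, contradicting Lemma \ref{lemm:string repr}. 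With these three implications established, (1), (2) and (3) are equivalent; no new obstacle appears beyond the one already resolved in Theorem \ref{thm:main 2}, and the present corollary is purely a matter of assembling the pieces.
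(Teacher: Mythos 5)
Your proposal is correct and follows essentially the same route as the paper: the equivalence is assembled from Theorem \ref{thm:main 2} (giving $(1)\Rightarrow(3)$, indeed $(1)\Leftrightarrow(3)$), Corollary \ref{coro:main 3 pre} (giving $(2)\Rightarrow(1)$), and the trivial implication $(3)\Rightarrow(2)$ via the admissible index $\R=\varnothing$. The only cosmetic difference is that you arrange these as a single cycle of implications, whereas the paper quotes the full equivalence $(1)\Leftrightarrow(3)$ directly; the content is identical.
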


\begin{proof}
The statements (1) and (3) are equivalent by using Theorem \ref{thm:main 2}.
Moreover, we have that (2) admits (1) by using Corollary \ref{coro:main 3 pre},
and it is trivial that (3) admits (1). Then this corollary holds.
\end{proof}

\subsection{Representation types of SAG-algebras and CM-Auslander algebras}

Recall that a Gorenstein-projective  (say G-projective for short) $A$-module $G$ is a module with complete projective resolution, that is, there is an exact sequence
\begin{center}
  $ \cdots \rarrow{p_{-2}} P_{-1} \rarrow{p_{-1}} P_0 \rarrow{p_0} P_1 \rarrow{p_1} P_2 \rarrow{p_2} \cdots$
\end{center}
such that
\begin{itemize}
  \item it is $\Hom_A(-,A)$-exact;
  \item $G\cong \kernel(p_1) = \image(p_0)$ holds.
\end{itemize}
We denote $\Gproj(A)$ the full subcategory of $\modcat(A)$ generated by all G-projective modules over $A$,
and denote $\ind(\Gproj(A))$ the set of all indecomposable G-projective modules over $A$ (up to isomorphism).

In \cite[etc]{Kal2015,CSZ2018}, Kalck and Chen$-$Shen$-$Zhou respectively provide the descriptions of G-projective modules over gentle algebra and monomial algebra.
Then we obtain that SAG-algebras are CM-finite, that is, the number of isoclasses of indecomposable G-projective modules is finite.
Thus, we can compute the Cohen-Macaulay Auslander algebra, defined as
\[A^{\CMA} := \End_A\left(\bigoplus_{G\in\ind(\Gproj(A))}G\right),\]
of $A$ by using results in \cite{Kal2015,CSZ2018},
see for example, \cite[etc]{CL2017,CL2019,LZhang2024CM-Auslander}.

\subsubsection{Perfect forbidden cycles}

A forbidden cycle $\C = c_1\cdots c_l$ ($\s(c_i)=i$, $i=1,2,\ldots,l$)
on string pair $(\Q,\I)$ is said to be {\defines perfect} if it satisfies the following two conditions.
\begin{itemize}
  \item For any arrow $\alpha$ ending with some vertex $t$ on $\C$, we have $\alpha c_t \notin \I$;
  \item For any arrow $\beta$ starting with some vertex $t$ on $\C$, we have $c_{t-1}\beta \notin \I$.
\end{itemize}

Perfect forbidden cycles can be used to describe all non-projective indecomposable Gorenstein-projective modules over SAG-algebra.
In particular, the set $\perR$ of all arrows on all perfect forbidden cycles is a left forbidden arrow index,
and call it a {\defines perfect index}.
The term ``perfect'' originates from ``perfect path'' and ``perfect pair'' which is first introduced by Chen$-$Shen$-$Zhou in \cite{CSZ2018}.
The following result is a direct corollary of \cite[Proposition 5.1]{CSZ2018}

\begin{corollary}[\!\!{\cite[Proposition 5.1]{CSZ2018}}] \label{coro:CSZ2018}
An arrowed module $\alpha A$ over an SAG-algebra $A=\kk\Q/\I$ is a non-projective indecomposable G-projective module if and only if $\alpha \in \perR$.
\end{corollary}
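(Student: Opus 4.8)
The plan is to deduce the statement from the classification of Gorenstein-projective modules over monomial algebras in \cite[Proposition 5.1]{CSZ2018}, by unwinding, in the setting of SAG-algebras, the notions of \emph{perfect pair} and \emph{perfect path} employed there. Recall that, for a monomial algebra $A=\kk\Q/\I$ (with $\I$ generated by a minimal set of paths), \cite[Proposition 5.1]{CSZ2018} identifies the non-projective indecomposable Gorenstein-projective $A$-modules with the modules $pA$, where $p$ runs through a set of representatives of the perfect paths; here a \emph{perfect path} is a nonzero path $p$ admitting a cyclic chain $p=p_0,p_1,\ldots,p_n=p_0$ of nonzero paths with each consecutive pair $(p_{i-1},p_i)$ a perfect pair, and a \emph{perfect pair} $(p',q')$ is one for which $p'q'$ is a minimal relation of $\I$ subject to the two minimality clauses that $p'$ is the unique nonzero path $r$ with $rq'$ a minimal relation, and dually $q'$ is the unique nonzero path $s$ with $p's$ a minimal relation.

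The decisive point is that, because $A$ is an SAG-algebra, every minimal relation of $\I$ is a path of length two; hence in a perfect pair $(p',q')$ the composite $p'q'$ has length two and $p',q'$ are arrows. Therefore every term of a cyclic chain of perfect pairs is an arrow, and a perfect path is itself an arrow. Passing to a minimal period, such a cyclic chain through an arrow $\alpha$ reads $\alpha=\alpha_1,\alpha_2,\ldots,\alpha_l,\alpha_1$ with $\t(\alpha_i)=\s(\alpha_{i+1})$, $\alpha_i\alpha_{i+1}\in\I$ for all $i$ modulo $l$, and pairwise distinct source vertices; that is, $\alpha_1,\ldots,\alpha_l$ form an oriented forbidden cycle $\C$. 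It then remains to check, at each vertex $t$ of $\C$, that the two minimality clauses of a perfect pair are exactly the two bullet conditions defining a perfect forbidden cycle: ``$\alpha_i$ is the only arrow $r$ with $r\alpha_{i+1}\in\I$'' becomes ``$\alpha c_t\notin\I$ for every arrow $\alpha\neq c_{t-1}$ ending at $t$'', and ``$\alpha_{i+1}$ is the only arrow $s$ with $\alpha_i s\in\I$'' becomes ``$c_{t-1}\beta\notin\I$ for every arrow $\beta\neq c_t$ starting at $t$'', where the axioms (S1), (S2)$_\Left$, (S2)$_\Right$ are used to control the arrows incident to $t$. Thus a path is a perfect path if and only if it is an arrow lying on a perfect forbidden cycle, i.e.\ an element of $\perR$.

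Granting this dictionary, the two implications follow. If $\alpha\in\perR$, traversing the perfect forbidden cycle through $\alpha$ yields a cyclic chain of perfect pairs based at $\alpha$, so $\alpha$ is a perfect path and $\alpha A$ is a non-projective indecomposable Gorenstein-projective module by \cite[Proposition 5.1]{CSZ2018} — it is indecomposable already by Lemma \ref{lemm:alphaA}, and non-projective because the next arrow $c$ on the cycle satisfies $\alpha c\in\I$, so that $\alpha A$ has a nonzero first syzygy. Conversely, if $\alpha A$ is a non-projective indecomposable Gorenstein-projective module, then \cite[Proposition 5.1]{CSZ2018} together with the dictionary gives $\alpha A\cong pA$ for some arrow $p\in\perR$; comparing tops forces $\t(\alpha)=\t(p)$, after which comparing the first syzygies of $\alpha A$ and $pA$ inside the common projective cover — these being the direct sums of the arrowed modules attached to the arrows forming a relation with $\alpha$, respectively with $p$ — and invoking the perfectness of the cycle through $p$ and the string axioms identifies $\alpha$ with $p$, whence $\alpha\in\perR$. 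The step I expect to be most delicate is precisely this bookkeeping: transcribing the definitions of \cite{CSZ2018} faithfully, passing from an arbitrary cyclic chain of perfect pairs to a genuine simple oriented forbidden cycle, and carrying out the vertex-by-vertex translation of the minimality clauses; once that is done, the rest is formal.
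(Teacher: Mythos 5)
Your proposal is correct and follows essentially the same route as the paper: the paper's proof likewise just recalls the perfect pair/perfect path notions of \cite{CSZ2018} and invokes \cite[Proposition 5.1]{CSZ2018}, using that all relations in an SAG-algebra have length two so that perfect paths are exactly the arrows on perfect forbidden cycles. Your write-up simply carries out in detail the dictionary that the paper leaves implicit (including the harmless clarification that the bullet conditions should exclude $\alpha=c_{t-1}$ and $\beta=c_t$).
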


\begin{proof}
Recall that a {\defines perfect pair} on a perfect forbidden cycle $\C=c_0c_1\cdots c_{n-1}$ of length $n$ is defined as a sequence which is of the following form
\begin{center}
  $\wp[t] = (c_{\overline{1+t}}, c_{\overline{2+t}}, \ldots, c_{\overline{n-1+t}}, c_{\overline{1+t}})$,
\end{center}
where, for any $m\in \NN$, $\overline{m}$ defined as $m$ modulo $n$,
see \cite[Definition 3.3]{CSZ2018} or cf. \cite[Definition 3.1]{LZhang2024CM-Auslander}.
Then, by using the definition of SAG-algebra and \cite[Proposition 5.1]{CSZ2018},
we obtain this corollary.
\end{proof}

\begin{theorem} \label{thm:main 4} 
Let $A=\kk\Q/\I$ be an SAG-algebra, $\C_1,\ldots,\C_t$ be perfect forbidden cycles on $(\Q,\I)$.
Then $A_{\perR}$ is isomorphic to the CM-Auslander algebra of $A$.
\end{theorem}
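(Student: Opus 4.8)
The plan is to realise the asserted isomorphism already at the level of modules: I would prove that
$$\bigoplus_{G\in\ind(\Gproj(A))}G\ \cong\ A\oplus\bigoplus_{\alpha\in\perR}\alpha A\ =\ M_{\perR}$$
as $A$-modules, and then apply $\End_A(-)$ to both sides to obtain $A^{\CMA}\cong A_{\perR}$. Since $A=\kk\Q/\I$ is basic, $A=\bigoplus_{v\in\Q_0}P(v)$ is multiplicity-free, so $M_{\perR}$ itself is multiplicity-free once one knows the modules $\alpha A$ ($\alpha\in\perR$) are pairwise non-isomorphic and non-projective; hence it suffices to check that the indecomposable summands occurring in the two displayed sums coincide, each with multiplicity one. (Here one uses that $\perR$ is a left forbidden arrow index, every arrow of a forbidden cycle being a left forbidden arrow, so that $A_{\perR}$ is defined; by Theorem \ref{thm:main 1} it is then an SAG-algebra, as $A^{\CMA}$ should be.)

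Next I would pin down $\ind(\Gproj(A))$. The indecomposable projectives $P(v)$, $v\in\Q_0$, are Gorenstein-projective and pairwise non-isomorphic. For the non-projective part I would invoke the Chen--Shen--Zhou description of Gorenstein-projective modules over monomial algebras recalled before Corollary \ref{coro:CSZ2018} (see \cite{CSZ2018}): every indecomposable non-projective Gorenstein-projective $A$-module is an arrowed module $\alpha A$, and by Corollary \ref{coro:CSZ2018} an arrowed module $\alpha A$ is non-projective, indecomposable and Gorenstein-projective exactly when $\alpha\in\perR$. Therefore $\ind(\Gproj(A))=\{P(v)\mid v\in\Q_0\}\cup\{\alpha A\mid\alpha\in\perR\}$, and the two families are disjoint because the $P(v)$ are projective whereas the $\alpha A$ with $\alpha\in\perR$ are not.

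It remains to prove that $\alpha\mapsto[\alpha A]$ is injective on $\perR$; this is the essential point. By Lemma \ref{lemm:alphaA} each $\alpha A$ is a uniserial string module with top $S(\t(\alpha))$, determined by the directed string it corresponds to, whose first arrow (when $\alpha A$ is not simple) is the unique arrow out of $\t(\alpha)$ whose precomposition with $\alpha$ lies outside $\I$. Suppose $\alpha A\cong\beta A$ with $\alpha\neq\beta$ in $\perR$. Then $\t(\alpha)=\t(\beta)=:w$, and in the non-simple case the common first arrow $\gamma$ of the two strings is the unique arrow out of $w$ with $\alpha\gamma\notin\I$, and simultaneously with $\beta\gamma\notin\I$. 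Writing $c_w$ (resp. $c_w'$) for the arrow out of $w$ lying on the perfect forbidden cycle through $\alpha$ (resp. through $\beta$), the forbidden-cycle relations give $\alpha c_w\in\I$ and $\beta c_w'\in\I$, so $\gamma\notin\{c_w,c_w'\}$; since $w$ has at most two outgoing arrows by (S1), this forces $c_w=c_w'$, whence $\beta c_w=\beta c_w'\in\I$. But $\beta$ ends at the vertex $w$ of the perfect forbidden cycle through $\alpha$, and $\beta$ is not on that cycle (otherwise $\beta$ would be its incoming arrow at $w$, i.e. $\beta=\alpha$); so the first defining condition of a perfect forbidden cycle gives $\beta c_w\notin\I$, a contradiction. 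The degenerate case $\alpha A\cong S(w)$ is settled the same way, using the second perfect-cycle condition to see that the only arrow leaving $w$ is the cycle arrow $c_w$, with $\alpha c_w\in\I$. Hence $\alpha=\beta$.

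Combining the last two paragraphs, $M_{\perR}$ and $\bigoplus_{G\in\ind(\Gproj(A))}G$ have the same isotypic decomposition, so they are isomorphic as $A$-modules, and applying $\End_A(-)$ yields $A_{\perR}\cong A^{\CMA}$. I expect the main obstacle to be the injectivity statement of the third paragraph (together with the correct treatment of the simple case); the remaining steps are essentially bookkeeping around Corollary \ref{coro:CSZ2018} and the monomial-algebra classification of Gorenstein-projective modules.
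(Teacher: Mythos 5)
Your proposal is correct and takes essentially the same route as the paper: both reduce the theorem to Corollary \ref{coro:CSZ2018}, which identifies the non-projective indecomposable Gorenstein-projective $A$-modules with the arrowed modules $\alpha A$ for $\alpha\in\perR$, and then apply $\End_A(-)$ to the resulting identification of $M_{\perR}$ with $\bigoplus_{G\in\ind(\Gproj(A))}G$. Your third paragraph, checking that $\alpha\mapsto\alpha A$ is injective on $\perR$ so that the multiplicities on the two sides agree, is a sound verification of a point the paper's one-line proof leaves implicit.
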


\begin{proof}
By Corollary \ref{coro:CSZ2018}, an indecomposable module is a non-projective indecomposable G-projective module if and only if it is isomorphic to $\alpha A$ with $\alpha\in \perR$.
Thus, we obtain
\[ A_{\perR}
 = \End_A \bigg(A\oplus\bigoplus_{\alpha\in\perR} \alpha A\bigg)
 \cong \End_A\bigg(A\oplus\bigoplus_{
         \begin{smallmatrix}
           G \in \ind(\Gproj(A)) \\
           G \text{ is non-projective }
         \end{smallmatrix}
    } G\bigg) \]
\[
 \cong \End_A\bigg(\bigoplus_{G \in \ind(\Gproj(A))}G\bigg)
 = A^{\CMA}.
\]
\end{proof}

Furthermore, we have the following result.

\begin{corollary} \label{coro:main 3-2}
An SAG-algebra is representation-finite if and only if so is its CM-Auslander algebra.
\end{corollary}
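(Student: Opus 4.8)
The plan is to deduce this equivalence directly from the identification of $A^{\CMA}$ with an $\R$-endomorphism algebra, combined with the already-established comparison of representation types between $A$ and its $\R$-endomorphism algebras. First I would recall that, by the discussion preceding Corollary~\ref{coro:CSZ2018}, the perfect index $\perR$ — the set of all arrows lying on a perfect forbidden cycle of $(\Q,\I)$ — consists of left forbidden arrows and hence is a genuine left forbidden arrow index in the sense of Definition~\ref{def:R}. Consequently Theorem~\ref{thm:main 4} applies and yields an isomorphism $A^{\CMA} \cong A_{\perR}$; in particular $A^{\CMA}$ and $A_{\perR}$ have the same representation type.

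Having reduced the question to $A_{\perR}$, I would finish by invoking Corollary~\ref{coro:main 3-1} with $\R = \perR$ (clauses (1) and (2) supply exactly the two implications needed), or equivalently its two constituents: the forward direction of Theorem~\ref{thm:main 2} shows that representation-finiteness of $A$ forces representation-finiteness of every $A_{\R}$, in particular of $A_{\perR}$, while Corollary~\ref{coro:main 3 pre} shows conversely that representation-finiteness of $A_{\perR}$ forces representation-finiteness of $A$. Chaining these with the isomorphism of the previous paragraph gives that $A$ is representation-finite if and only if $A^{\CMA}$ is.

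There is essentially no analytic obstacle here: all the real work has already been carried out — the module-theoretic identification of the non-projective indecomposable G-projective summands as the $\alpha A$ with $\alpha\in\perR$ (Corollary~\ref{coro:CSZ2018}, hence Theorem~\ref{thm:main 4}), and the band-counting arguments underlying Theorem~\ref{thm:main 2} and Corollary~\ref{coro:main 3 pre}. The only point worth stating explicitly is that an SAG-algebra $A$ is CM-finite, so that $A^{\CMA}$ is a well-defined finite-dimensional algebra and the isomorphism of Theorem~\ref{thm:main 4} makes sense; this was already observed in the text above using the descriptions of G-projective modules in \cite{Kal2015,CSZ2018}. Thus the proof is a short bookkeeping argument assembling these ingredients, and I would write it in essentially that form.
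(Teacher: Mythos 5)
Your proposal is correct and follows essentially the same route as the paper: it notes that $\perR$ is a left forbidden arrow index, invokes Theorem \ref{thm:main 4} to identify $A^{\CMA}$ with $A_{\perR}$, and then applies Corollary \ref{coro:main 3-1} (equivalently Theorem \ref{thm:main 2} together with Corollary \ref{coro:main 3 pre}) to transfer representation-finiteness between $A$ and $A_{\perR}$. No gaps.
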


\begin{proof}
Let $A$ be an SAG-algebra. Notice that $\perR$ is a left forbidden arrow index,
then, by Corollary \ref{coro:main 3-1} (1) and (2), we have the representation types of $A$ and $A_{\perR}$ coincide.
By Theorem \ref{thm:main 4}, we have the representation types of $A$ and $A^{\CMA}$ coincide.
\end{proof}

\begin{example} \rm \label{examp:SAG}
Let $A=\kk\T/\J$ is given by the bound quiver $(\T,\J)$, where $\T$ is the quiver of the string algebra given in Example \ref{examp:string} and
\begin{center}
  $\J = \langle ab, bc, ca, dd', ee', ff',$
  $a'b', b'c', c'a', $

  $e'f, e'c', f'd, f'a', d'e, d'b',$
  $a'e, b'f, c'd\rangle$.
\end{center}
see \Pic \ref{fig in examp:SAG}.
\begin{figure}[htbp]
\centering
\begin{tikzpicture}[scale=2]
\draw[   red][dotted][line width = 0.8pt][rotate around={  0:(0,0)}]
  ( 0.51, 0.78) to[out=-60,in=60] ( 0.51,-0.78);
\draw[yellow][dotted][line width = 0.8pt][rotate around={120:(0,0)}]
  ( 0.51, 0.78) to[out=-60,in=60] ( 0.51,-0.78);
\draw[  blue][dotted][line width = 0.8pt][rotate around={240:(0,0)}]
  ( 0.51, 0.78) to[out=-60,in=60] ( 0.51,-0.78);
\draw[   red][dotted][line width = 0.8pt][rotate around={  0:(0,0)}]
  ( 1.50,-0.15) -- ( 1.00, 1.00);
\draw[yellow][dotted][line width = 0.8pt][rotate around={120:(0,0)}]
  ( 1.50,-0.15) -- ( 1.00, 1.00);
\draw[  blue][dotted][line width = 0.8pt][rotate around={240:(0,0)}]
  ( 1.50,-0.15) -- ( 1.00, 1.00);
\draw[   red][dashed][line width = 1.2pt][rotate around={  0:(0,0)}]
  ( 2.15,-0.15) arc (180:225:0.55);
\draw[yellow][dashed][line width = 1.2pt][rotate around={120:(0,0)}]
  ( 2.15,-0.15) arc (180:225:0.55);
\draw[  blue][dashed][line width = 1.2pt][rotate around={240:(0,0)}]
  ( 2.15,-0.15) arc (180:225:0.55);
\draw[   red][dotted][line width = 0.8pt][rotate around={  0:(0,0)}]
  (2.70,-0.15-0.60) arc( -90:  90:0.60);
\draw[   red][dashed][line width = 0.8pt][rotate around={  0:(0,0)}]
  (2.70,-0.15+0.75) arc(  90:-136:0.75);
\draw[   red][dashed][line width = 1.6pt][rotate around={  0:(0,0)}]
  (2.70,-0.15-0.45) arc( -90: 180:0.45);
\draw[yellow][dotted][line width = 0.8pt][rotate around={120:(0,0)}]
  (2.70,-0.15-0.60) arc( -90:  90:0.60);
\draw[yellow][dashed][line width = 0.8pt][rotate around={120:(0,0)}]
  (2.70,-0.15+0.75) arc(  90:-136:0.75);
\draw[yellow][dashed][line width = 1.6pt][rotate around={120:(0,0)}]
  (2.70,-0.15-0.45) arc( -90: 180:0.45);
\draw[  blue][dotted][line width = 0.8pt][rotate around={240:(0,0)}]
  (2.70,-0.15-0.60) arc( -90:  90:0.60);
\draw[  blue][dashed][line width = 0.8pt][rotate around={240:(0,0)}]
  (2.70,-0.15+0.75) arc(  90:-136:0.75);
\draw[  blue][dashed][line width = 1.6pt][rotate around={240:(0,0)}]
  (2.70,-0.15-0.45) arc( -90: 180:0.45);
\draw[line width = 0.8pt][->] [rotate around={  0:(0,0)}]
  (1,0) arc(0:105:1);
\draw[line width = 0.8pt][->] [rotate around={120:(0,0)}]
  (1,0) arc(0:105:1);
\draw[line width = 0.8pt][->] [rotate around={240:(0,0)}]
  (1,0) arc(0:105:1);
\draw[line width = 0.8pt][->] [rotate around={  0:(0,0)}]
  (2.5,-0.15) -- (1.2,-0.15);
\draw[line width = 0.8pt][->] [rotate around={120:(0,0)}]
  (2.5,-0.15) -- (1.2,-0.15);
\draw[line width = 0.8pt][->] [rotate around={240:(0,0)}]
  (2.5,-0.15) -- (1.2,-0.15);
\draw[line width = 0.8pt][->] [rotate around={  0:(0,0)}]
  (1.1,-0.05) to[out=80,in=0] (-1.1,2.3);
\draw[line width = 0.8pt][->] [rotate around={120:(0,0)}]
  (1.1,-0.05) to[out=80,in=0] (-1.1,2.3);
\draw[line width = 0.8pt][->] [rotate around={240:(0,0)}]
  (1.1,-0.05) to[out=80,in=0] (-1.1,2.3);
\draw[line width = 0.8pt][->] [rotate around={  0:(0,0)}]
  (2.7,0) arc(0:115:2.7);
\draw[line width = 0.8pt][->] [rotate around={120:(0,0)}]
  (2.7,0) arc(0:115:2.7);
\draw[line width = 0.8pt][->] [rotate around={240:(0,0)}]
  (2.7,0) arc(0:115:2.7);
{\tiny
\draw[rotate around={  0:(0,0)}] ( 1.00,-0.15) node{$1$};
\draw[rotate around={120:(0,0)}] ( 1.00,-0.15) node{$2$};
\draw[rotate around={240:(0,0)}] ( 1.00,-0.15) node{$3$};
\draw[rotate around={  0:(0,0)}] ( 2.70,-0.15) node{$4$};
\draw[rotate around={120:(0,0)}] ( 2.70,-0.15) node{$5$};
\draw[rotate around={240:(0,0)}] ( 2.70,-0.15) node{$6$};
\draw[rotate around={  0:(0,0)}] ( 0.45, 0.78) node{$a$};
\draw[rotate around={120:(0,0)}] ( 0.45, 0.78) node{$b$};
\draw[rotate around={240:(0,0)}] ( 0.45, 0.78) node{$c$};
\draw[rotate around={  0:(0,0)}] ( 1.85, 0.05) node{$d$};
\draw[rotate around={120:(0,0)}] ( 1.85, 0.05) node{$e$};
\draw[rotate around={240:(0,0)}] ( 1.85, 0.05) node{$f$};
\draw[rotate around={  0:(0,0)}] ( 0.85, 1.47) node{$d'$};
\draw[rotate around={120:(0,0)}] ( 0.85, 1.47) node{$e'$};
\draw[rotate around={240:(0,0)}] ( 0.85, 1.47) node{$f'$};
\draw[rotate around={  0:(0,0)}] ( 1.45, 2.51) node{$a'$};
\draw[rotate around={120:(0,0)}] ( 1.45, 2.51) node{$b'$};
\draw[rotate around={240:(0,0)}] ( 1.45, 2.51) node{$c'$};
}
\end{tikzpicture}
\caption{The bound quiver of the SAG-algebra given in Example \ref{fig in examp:SAG}}
\label{fig in examp:SAG}
(The dashed lines represent the relations in $\I$)
\end{figure}
Then $A$ is an SAG-algebra, and $aA$, $bA$, and $cA$ are both non-projective and G-projective since $abc$ is a perfect forbidden cycle.

(1) Notice that $a'b'c'$ is not a perfect forbidden cycle, then $a'A$, $b'A$, and $c'A$ are not G-projective by using Corollary \ref{coro:CSZ2018} (or \cite[Proposition 5.1]{CSZ2018}).
It follows that $\alpha A$ may be not G-projective.

(2) Now we provide an instance for Theorem \ref{thm:main 4} and Corollary \ref{coro:main 3-2}.
Take $\R=\{a,b,c\} = \perR$, then the bound quiver of $A_{\R}$ is shown in \Pic \ref{fig in examp:SAG CMA} which is isomorphic to the CM-Auslander algebra $A^{\CMA} = \kk\T^{\CMA}/\J^{\CMA}$ of $A$,
\begin{figure}[htbp]
\centering
\begin{tikzpicture}[scale=2]
\draw[   red][dotted][line width = 1.1pt][rotate around={  0:(0,0)}]
  ( 0.8, 0.4) arc(90:270:0.5);
\draw[yellow][dotted][line width = 1.1pt][rotate around={120:(0,0)}]
  ( 0.8, 0.4) arc(90:270:0.5);
\draw[  blue][dotted][line width = 1.1pt][rotate around={240:(0,0)}]
  ( 0.8, 0.4) arc(90:270:0.5);
\draw[   red][dotted][line width = 0.8pt][rotate around={  0:(0,0)}]
  ( 1.50,-0.15) -- ( 1.00, 1.00);
\draw[yellow][dotted][line width = 0.8pt][rotate around={120:(0,0)}]
  ( 1.50,-0.15) -- ( 1.00, 1.00);
\draw[  blue][dotted][line width = 0.8pt][rotate around={240:(0,0)}]
  ( 1.50,-0.15) -- ( 1.00, 1.00);
\draw[   red][dashed][line width = 1.2pt][rotate around={  0:(0,0)}]
  ( 2.15,-0.15) arc (180:225:0.55);
\draw[yellow][dashed][line width = 1.2pt][rotate around={120:(0,0)}]
  ( 2.15,-0.15) arc (180:225:0.55);
\draw[  blue][dashed][line width = 1.2pt][rotate around={240:(0,0)}]
  ( 2.15,-0.15) arc (180:225:0.55);
\draw[   red][dotted][line width = 0.8pt][rotate around={  0:(0,0)}]
  (2.70,-0.15-0.60) arc( -90:  90:0.60);
\draw[   red][dashed][line width = 0.8pt][rotate around={  0:(0,0)}]
  (2.70,-0.15+0.75) arc(  90:-136:0.75);
\draw[   red][dashed][line width = 1.6pt][rotate around={  0:(0,0)}]
  (2.70,-0.15-0.45) arc( -90: 180:0.45);
\draw[yellow][dotted][line width = 0.8pt][rotate around={120:(0,0)}]
  (2.70,-0.15-0.60) arc( -90:  90:0.60);
\draw[yellow][dashed][line width = 0.8pt][rotate around={120:(0,0)}]
  (2.70,-0.15+0.75) arc(  90:-136:0.75);
\draw[yellow][dashed][line width = 1.6pt][rotate around={120:(0,0)}]
  (2.70,-0.15-0.45) arc( -90: 180:0.45);
\draw[  blue][dotted][line width = 0.8pt][rotate around={240:(0,0)}]
  (2.70,-0.15-0.60) arc( -90:  90:0.60);
\draw[  blue][dashed][line width = 0.8pt][rotate around={240:(0,0)}]
  (2.70,-0.15+0.75) arc(  90:-136:0.75);
\draw[  blue][dashed][line width = 1.6pt][rotate around={240:(0,0)}]
  (2.70,-0.15-0.45) arc( -90: 180:0.45);
\draw[line width = 1.2pt][->] [rotate around={  0:(0,0)}][violet]
  (1,0) arc(0:45:1);
\draw[line width = 1.2pt][->] [rotate around={ 60:(0,0)}][violet]
  (1,0) arc(0:45:1);
\draw[line width = 1.2pt][->] [rotate around={120:(0,0)}][violet]
  (1,0) arc(0:45:1);
\draw[line width = 1.2pt][->] [rotate around={180:(0,0)}][violet]
  (1,0) arc(0:45:1);
\draw[line width = 1.2pt][->] [rotate around={240:(0,0)}][violet]
  (1,0) arc(0:45:1);
\draw[line width = 1.2pt][->] [rotate around={300:(0,0)}][violet]
  (1,0) arc(0:45:1);
\draw[line width = 0.8pt][->] [rotate around={  0:(0,0)}]
  (2.5,-0.15) -- (1.2,-0.15);
\draw[line width = 0.8pt][->] [rotate around={120:(0,0)}]
  (2.5,-0.15) -- (1.2,-0.15);
\draw[line width = 0.8pt][->] [rotate around={240:(0,0)}]
  (2.5,-0.15) -- (1.2,-0.15);
\draw[line width = 0.8pt][->] [rotate around={  0:(0,0)}]
  (1.1,-0.05) to[out=80,in=0] (-1.1,2.3);
\draw[line width = 0.8pt][->] [rotate around={120:(0,0)}]
  (1.1,-0.05) to[out=80,in=0] (-1.1,2.3);
\draw[line width = 0.8pt][->] [rotate around={240:(0,0)}]
  (1.1,-0.05) to[out=80,in=0] (-1.1,2.3);
\draw[line width = 0.8pt][->] [rotate around={  0:(0,0)}]
  (2.7,0) arc(0:115:2.7);
\draw[line width = 0.8pt][->] [rotate around={120:(0,0)}]
  (2.7,0) arc(0:115:2.7);
\draw[line width = 0.8pt][->] [rotate around={240:(0,0)}]
  (2.7,0) arc(0:115:2.7);
{\tiny
\draw[rotate around={  0:(0,0)}] ( 1.00,-0.15) node{$1$};
\draw[rotate around={120:(0,0)}] ( 1.00,-0.15) node{$2$};
\draw[rotate around={240:(0,0)}] ( 1.00,-0.15) node{$3$};
\draw[rotate around={  0:(0,0)}] ( 2.70,-0.15) node{$4$};
\draw[rotate around={120:(0,0)}] ( 2.70,-0.15) node{$5$};
\draw[rotate around={240:(0,0)}] ( 2.70,-0.15) node{$6$};
\draw[rotate around={-35:(0,0)}] ( 0.45, 0.78) node{$a_{\Left}$};
\draw[rotate around={ 20:(0,0)}] ( 0.45, 0.78) node{$a_{\Right}$};
\draw[rotate around={ 60:(0,0)}] ( 1.00,-0.15) node{$v_{b}$};
\draw[rotate around={ 85:(0,0)}] ( 0.45, 0.78) node{$b_{\Left}$};
\draw[rotate around={140:(0,0)}] ( 0.45, 0.78) node{$b_{\Right}$};
\draw[rotate around={180:(0,0)}] ( 1.00,-0.15) node{$v_{c}$};
\draw[rotate around={205:(0,0)}] ( 0.45, 0.78) node{$c_{\Left}$};
\draw[rotate around={260:(0,0)}] ( 0.45, 0.78) node{$c_{\Right}$};
\draw[rotate around={300:(0,0)}] ( 1.00,-0.15) node{$v_{a}$};
\draw[rotate around={  0:(0,0)}] ( 1.85, 0.05) node{$d$};
\draw[rotate around={120:(0,0)}] ( 1.85, 0.05) node{$e$};
\draw[rotate around={240:(0,0)}] ( 1.85, 0.05) node{$f$};
\draw[rotate around={  0:(0,0)}] ( 0.85, 1.47) node{$d'$};
\draw[rotate around={120:(0,0)}] ( 0.85, 1.47) node{$e'$};
\draw[rotate around={240:(0,0)}] ( 0.85, 1.47) node{$f'$};
\draw[rotate around={  0:(0,0)}] ( 1.45, 2.51) node{$a'$};
\draw[rotate around={120:(0,0)}] ( 1.45, 2.51) node{$b'$};
\draw[rotate around={240:(0,0)}] ( 1.45, 2.51) node{$c'$};
}
\end{tikzpicture}
\caption{The bound quiver of the CM-Auslander algebra $A^{\CMA}$,
where $A$ is the SAG-algebra in Example \ref{examp:SAG}}
\label{fig in examp:SAG CMA}
(The dashed lines represent the relations in $\I^{\CMA}$)
\end{figure}
where
\begin{center}
  $\J^{\CMA} = \langle a_{\Right}b_{\Left}, b_{\Right}c_{\Left}, c_{\Right}a_{\Left}, dd', ee', ff',$
  $a'b', b'c', c'a', $

  $e'f, e'c', f'd, f'a', d'e, d'b',$
  $a'e, b'f, c'd\rangle$.
\end{center}
Moreover, since the bound quiver $(\T,\J)$ has a band
\begin{center}
  $B = a'd'^{-1}ae^{-1}b'e'^{-1}bf^{-1}c'f'^{-1}cd^{-1}$,
\end{center}
we obtain that $A$ is representation-infinite by using Lemma \ref{lemm:string repr}.
Notice that $B$ corresponds to the band
\begin{center}
  $B^{\spl} = a'd'^{-1}a_{\Left}a_{\Right}e^{-1}b'e'^{-1}b_{\Left}b_{\Right}f^{-1}c'f'^{-1}c_{\Left}c_{\Right}d^{-1}$
\end{center}
on the bound quiver $(\T^{\CMA},\J^{\CMA})$, then $A^{\CMA}$ is also a representation-infinite SGA-algebra.
\end{example}

\vspace{1cm}

\section*{Acknowledgements}
\begin{itemize}
\item[$\rhd$]
Yu-Zhe Liu is supported by the National Natural Science Foundation of China (Grant No. 12401042),
Guizhou Provincial Basic Research Program (Natural Science) (Grant No. ZK[2024]YiBan066)
and Scientific Research Foundation of Guizhou University (Grant Nos. [2022]53, [2022]65, [2023]16).
\vspace{-2mm}

\item[$\rhd$]
Panyue Zhou is supported by the National Natural Science Foundation of China (Grant No. 12371034)
and the Hunan Provincial Natural Science Foundation of China (Grant No. 2023JJ30008).
\end{itemize}
\vspace{1cm}

\hspace{-4mm}\textbf{Data Availability}\hspace{2mm} Data sharing not applicable to this article as no datasets were generated or analysed during
the current study.
\vspace{4mm}

\hspace{-4mm}\textbf{Conflict of Interests}\hspace{2mm} The authors declare that they have no conflicts of interest to this work.

\newpage


  \bibliographystyle{alpha} 

 \bibliography{referLiu20241004}

\end{document}